\documentclass[10pt,reqno]{amsart}
\usepackage{hyperref}
\usepackage{amssymb,amsmath, amsthm}
\usepackage{srcltx}
\usepackage{graphicx}
\usepackage[svgnames]{xcolor}

\usepackage[normalem]{ulem}

\setlength{\textwidth}{13.5cm}

\newtheorem{thm}{Theorem}[section]
\newtheorem{lemma}[thm]{Lemma}

\newtheorem{prop}[thm]{Proposition}

\hypersetup{colorlinks=true,linkcolor=violet,citecolor=purple}

\theoremstyle{definition}
\newtheorem{defn}[thm]{Definition}
\newtheorem{rem}[thm]{Remark}

\makeatletter
\@addtoreset{equation}{section}

\makeatother

%

\newcommand{\PP}{\mathcal{P}}  
\newcommand{\w}{\omega}        
\newcommand{\ed}{\mathbf{d}}   
\newcommand{\J}{\mathbf{J}}    
\newcommand{\D}{\mathbf{D}}    
\newcommand{\Z}{\mathcal{Z}}  
\newcommand{\LieAlg}{\mathfrak{Lie}}   
\newcommand{\R}{\mathbb{R}}    
\newcommand{\C}{\mathbb{C}}    
\newcommand{\Ad}{\mathrm{Ad}}  
\newcommand{\ad}{\mathrm{ad}}  
\newcommand{\im}{\mathrm{im}\, }  
\newcommand{\OO}{\mathcal{O}}  
\newcommand{\g}{\mathfrak{g}}
\newcommand{\lil}{\mathfrak{l}}
\newcommand{\h}{\mathfrak{h}}
\newcommand{\n}{\mathfrak{n}}
\newcommand{\q}{\mathfrak{q}}
\newcommand{\m}{\mathfrak{m}}
\newcommand{\z}{\mathfrak{z}}
\newcommand{\be}{\begin{equation}}
\newcommand{\ee}{\end{equation}}
\newcommand{\bea}{\begin{eqnarray}}
\newcommand{\eea}{\end{eqnarray}}
\newcommand{\Proj}{\mathbb{P}} 
\newcommand{\bh}{\bar h}

\newcommand{\mgs}{G\times_{G_z}(\m^*\times N)} 
\newcommand{\Null}{\mathcal{N}}

\newcommand{\xx}{\mathbf{x}}
\newcommand{\zt}{\widetilde{z}}

\newcommand{\restr}[1]{\vrule height3ex width.4pt depth1.4ex\lower1.4ex\hbox{\scriptsize $\,#1$}}
\newcommand{\rrestr}[1]{\,\vrule height2ex width.4pt depth0.9ex\lower0.9ex\hbox{\scriptsize $\,#1$}}

\def\paragraph#1{\par\bigskip\noindent\textbf{#1}}


\begin{document}

\title[Hamiltonian relative equilibria]{Hamiltonian relative equilibria with continuous isotropy}

\author{James Montaldi}

\author{Miguel Rodr\'{\i}guez-Olmos}

\email{j.montaldi@manchester.ac.uk, miguel.rodriguez.olmos@upc.edu}

\address{JM: School of Mathematics, University of Manchester, Manchester M13 9PL, UK}
\address{MRO: Departamento de Matem\'atica Aplicada IV, Universidad Polit\'ecnica de Catalu\~na, Barcelona, Spain.}

\thanks{This research was partially supported by a Marie Curie Intra European Fellowship PIEF-GA-2008-220239 and a Marie Curie Reintegration grant PERG-GA-2010-27697. MR-O's research has been partially supported by Ministerio de Ciencia e Innovaci\'on (Spain), project MTM2011-22585 and Ministerio de Econom\'{\i}a y Competitividad (Spain), project MTM2014-54855-PÓ}

\subjclass[2010]{70H33, 37J20}

\maketitle

\begin{abstract}
In symmetric Hamiltonian systems, relative equilibria usually arise in continuous families. The geometry of these families in the setting of free actions of the symmetry group is well-understood.  Here we consider the question for non-free actions.  Some results are already known in this direction, and we use the so-called bundle equations to provide a systematic treatment of this question which both consolidates the known results, extending the scope of the results to deal with non-compact symmetry groups, as well as producing new results. Specifically we address questions about the
 stability, persistence and bifurcations of these relative equilibria.
 \end{abstract}

\tableofcontents

\section{Introduction}
This article deals with a particular kind of integral curve of
Hamiltonian flows on symplectic manifolds equivariant under the
canonical action of a Lie group. They are usually known in the literature as relative
equilibria, or steady state solutions,  a
terminology inherited from analytical mechanics. In this article we
follow an approach to the qualitative study of some local properties
of these solutions, based on the use of geometrically adapted
tubular neighbourhoods. This approach is an evolution of methods previously proposed in the literature by several authors (see \cite{RoSD97,RoWuLa02,Wu01}).

Let $(\PP,\omega)$ be a symplectic manifold, $G$  a Lie group acting
on  $\PP$ by symplectomorphisms and $h$  a
smooth $G$-invariant function on $\PP$. Hamilton's equations produce a
vector field $X_h$ on $\PP$, and due to the invariance of $h$ and the
symplectic structure under the $G$-action, the flow of this vector field sends group
orbits to group orbits. A relative equilibrium is an integral curve
of $X_h$ that belongs to a single group orbit.

The importance of relative equilibria is more apparent using orbit
reduction. The flow of $X_h$ naturally descends  to a continuous flow
on the quotient space $\PP/G$, and relative equilibria project to
fixed points of this reduced flow, therefore being (together with
periodic orbits, which will not be addressed in this article) the
primary object of qualitative local studies. If the action of $G$ on
$\PP$ is free and proper
the quotient space $\PP/G$ is a manifold and the
reduced flow can be shown to be smooth and Hamiltonian with respect
to some reduced Hamiltonian function and Poisson structure on the quotient space.
This means that relative equilibria can be studied via their
projected fixed points using many of the standard available techniques
from differential geometry and critical point theory on this
quotient space.

For these reasons, this article deals with equivariant
Hamiltonian flows for which the symmetry Lie group action has
singularities. In this situation this geometric approach is no
longer possible since, to start with, the quotient space $\PP/G$ is no
longer smooth. We focus on the problems of nonlinear stability of
relative equilibria, as well as on the organization of relative
equilibria in parametrized branches and their bifurcations. The study of these
topics in the presence of singularities has been going on for a number of
decades now, and a variety of methods have been used to attack
them, from the use of singular reduction and confinement arguments
\cite{OrRa99}, to the use of suitable linearizations of the
Hamiltonian field \cite{Lew92,LeSi98,ChLeOrRa03}
passing by topological methods \cite{Mon97,MoTo03,PaRoWu04}, to cite a few.
Analogous questions for free actions are addressed in a number of papers,
and in particular \cite{Pat95,Mon97,PaRo00}

The motivation for this article is twofold. On the one hand, we
wish to obtain a convenient framework adapted to the study of the
local dynamics of equivariant Hamiltonian flows in the presence of
singularities specifically adapted to the unique geometric features
of the problem, and powerful enough to serve as a unifying approach
to deal with all aspects related to this topic. For that, we have
adopted the ``bundle equations'' proposed in \cite{RoSD97} and
\cite{RoWuLa02} and have built on this formalism in order to obtain
a self contained geometric machinery especially adapted to deal with Lie
group actions with singularities. On the other hand, we show how
these ideas are implemented by proving within this framework some standard
results in the theory, with improved hypotheses, as well as obtaining several new relevant ones.

\paragraph{Organization.}
This article is organized as follows: Section \ref{sec general} collects some standard ge\-ne\-ra\-li\-ties on symmetric Hamiltonian systems and relative equilibria. Then the bundle equations, the main theoretical framework of this article, are introduced. We present a modification of the bundle equations of \cite{RoSD97} and \cite{RoWuLa02} which explicitly incorporates the existence of continuous stabilizers for the group action, resulting in the bundle equations with isotropy \eqref{genbundle1}, \eqref{genbundle2} and \eqref{genbundle3}. These are then used, in Proposition \ref{prop cond RE}, to obtain a system of differential equations locally characterizing relative equilibria for a given equivariant Hamitonian flow. One of the main objectives of this article is to find solutions of these equations, and for that we prove two technical results (Lemmas \ref{lemmaND} and \ref{lemmaD}) that will apply in the subsequent persistence and bifurcation results.

In Section \ref{sec stability} we use the bundle equations with isotropy in order to revisit the proof of well known stability results from \cite{LeSi98,OrRa99} and \cite{MoRo11}.  Our approach allows us to extend these to the statement in Theorem \ref{stability}.  Compared to \cite{LeSi98,OrRa99} we are able to avoid  making use of the ``orthogonality'' choice of velocity, and compared to \cite{MoRo11} we allow the possibility of non-compact momentum stabilizer $G_\mu$.  In Remark \ref{remarkstability} we  explicitly show with an example how our approach gives sharper stability results than those in \cite{LeSi98} and \cite{OrRa99}.

In Section \ref{persistence} we deal with the problem of persistence. That is, under which conditions a relative equilibrium persists to a continuous branch of relative equilibria. This differs with the problem of bifurcations, which considers as starting point a parametrized branch of relative equilibria and finds conditions for which at a particular value of the parameter, new relative equilibria exist arbitrarily close to the original branch. These new relative equilibria are typically organized in bifurcating branches.

We obtain four main results in this direction: Theorems \ref{thm persistence nondeg}, \ref{thm persistence same orbit type}, \ref{thm persistence deg} and \ref{thm formally stable}, based on different non-degeneracy or degeneracy hypotheses. Some of these results, or particular cases of them, had been obtained in the past using quite different approaches. Here we show how they all  follow in a straightforward way from the bundle equations with isotropy and are therefore unified under a common framework. Many of these results assume the group of symmetries to be compact, whereas we allow non-compact symmetry groups in all our results, thereby also providing minor extensions to the known results.

 Finally in Section \ref{sec bifurcations} we address the problem of finding bifurcations from pa\-ra\-me\-tri\-zed branches of relative equilibria. Using the notion of branches of relative equilibria of the same symplectic type introduced in Definition \ref{branchdef}, we are able to give, in Theorem \ref{thm bifurcations}, sufficient conditions under which one such branch exhibits bifurcations. This result is further elaborated in Theorem \ref{2dformallystable}, where we show that branches of formally stable relative equilibria satisfying a dimensionality condition  generically have a continuous range of bifurcation points.
 This property is specifically related to the existence of continuous stabilizers and shows that these bifurcations are  purely singular phenomena, since it is a well known result, originally due to Arnold \cite{Arn89}, that branches of nondegenerate regular relative equilibria cannot bifurcate. In Section \ref{sec example} the results of this article are illustrated in two  study cases, the sleeping Lagrange top and the system of two point vortices on the sphere.

We use a mixture of notation for derivatives: $\ed h$ denotes the differential of the function $h$, and if $h:N\times X\to\R$ is defined on a product of spaces, $\D_Nh$ denotes the partial derivative(s) of $h$ with respect to the $N$-variables. Occasionally, we write $\ed_z^2h$ to denote the Hessian matrix at the point $z$, but omit reference to the point $z$ if it is clear.

\section{Hamiltonian relative equilibria}\label{sec general}
In this section we provide necessary and sufficient conditions for
the existence of Hamiltonian relative equilibria. The study will be local, in a
neighbourhood of a given point in a symplectic phase space of a
symmetric Hamiltonian system. To this end, we will use the framework
provided by the bundle equations obtained in \cite{RoSD97} and
generalized in \cite{RoWuLa02} (see also Chapter 7 in
\cite{OrRa03}). In our approach we will additionally incorporate
in a critical way the freedom in several choices that exists if
the symmetry group action exhibits continuous isotropy groups.

To fix notation, we recall the standard definitions.
A proper $G$-Hamiltonian system is a quintuple
$(\PP,\w,G,\J,h)$, where $(\PP,\w)$ is a symplectic manifold,
$G$ is a Lie group acting properly and in a Hamiltonian fashion on $\PP$.  We write the action  by concatenation: $(g,z) \mapsto g\cdot z$.  The resulting momentum map
$\J:\PP\rightarrow \g^*$ satisfies
$$\iota_{\xi_\PP}\w=\ed\langle \J(\cdot),\xi\rangle,\quad \forall \xi\in \g.$$
Here $\xi_\PP(z)=\frac{d}{dt}\rrestr{t=0}\exp(t\xi)\cdot z$ is
the fundamental vector field associated to the element
$\xi\in\g$ (the Lie algebra of $G$) evaluated at $z\in\PP$.  We assume $\J$ to be $G$-equivariant with respect to the coadjoint action on $\g^*$:
$$\J(g\cdot z)=\Ad_{g^{-1}}^*(\J(z)), \quad \forall g\in G,\,z\in \PP,$$
where $\Ad^*:G\times\g^*\rightarrow\g^*$ is the coadjoint
representation of $G$.  See also Remark\,\ref{rmk:modified action} for the adaptation to coadjoint actions modified by a cocycle.
Finally,  $h\in C^\infty(\PP)$ is a $G$-invariant
Hamiltonian function on $\PP$.
The symplectic manifold $\PP$ is the phase space for the Hamiltonian
dynamical system given by the flow of the vector field $X_h$ defined
by Hamilton's equations
$$\iota_{X_h}\w=\ed h.$$

Since $h$ and $\w$ are $G$-invariant, the Hamiltonian vector field
$X_h$ is $G$-equivariant and so is its flow, therefore sending
group orbits to group orbits for all times. A well known
theorem by Noether states that $\J$ is constant along integral curves of $X_h$.
Our main object of study is defined next.
\begin{defn}\label{defn re}
A relative equilibrium is a point in $\PP$ whose integral curve lies in a group orbit.
\end{defn}

The following characterizations of relative equilibria are standard
and can be found, for instance in \cite{Mars92}.

\begin{prop}\label{charRE}
Let $(\PP,\w,G,\J,h)$ be a $G$-Hamiltonian system and $z\in \PP$ with momentum $\J(z)=\mu$. The following are equivalent.
\begin{itemize}
\item[(i)] $z$ is a relative equilibrium.
\item[(ii)] The group orbit $G\cdot z$ consists of relative equilibria. That is, if $z$ is a relative equilibrium, then
so is $g\cdot z$ for any $g$ in $G$.
\item[(iii)] There is an element $\xi\in \g_\mu$ such that $X_h(z)=\xi_\PP(z)$, where $\g_\mu\leq\g$ is the Lie algebra of the stabilizer of $\mu$ defined by $G_\mu=\{g\in
G\,:\,\Ad_{g^{-1}}^*\mu=\mu\}$.
\item[(iv)] There is an element $\xi\in \g_\mu$ such that $\bar z(t)=e^{t\xi}\cdot z$, where $\bar z(t)$ is the integral curve of $X_h$ with $\bar z(0)=z$.
\item[(v)] There is an element $\xi\in \g_\mu$ such that $z$ is a critical point of the augmented Hamiltonian
$$h_\xi:=h-\J^\xi,$$
with $\J^\xi(z):=\langle\J(z),\xi\rangle$.
\end{itemize}
\end{prop}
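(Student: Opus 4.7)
The plan is to organise the equivalences around condition (iii), which is the most manipulable. First I would establish (iii) $\Leftrightarrow$ (v) using the defining property $\iota_{\xi_\PP}\omega = \ed\J^\xi$ of the momentum map together with Hamilton's equations $\iota_{X_h}\omega = \ed h$: subtracting gives $\ed h_\xi|_z = \iota_{X_h(z)-\xi_\PP(z)}\omega_z$, which by nondegeneracy of $\omega$ vanishes if and only if $X_h(z) = \xi_\PP(z)$. The same $\xi$ works in both statements, so this equivalence is essentially a symplectic-linear-algebra observation.

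Next I would prove (iii) $\Leftrightarrow$ (iv). The implication (iv) $\Rightarrow$ (iii) is immediate by differentiating $\bar z(t) = e^{t\xi}\cdot z$ at $t=0$. For (iii) $\Rightarrow$ (iv), $G$-equivariance of $X_h$ gives
\[
\frac{d}{dt}(e^{t\xi}\cdot z) = \xi_\PP(e^{t\xi}\cdot z) = X_h(e^{t\xi}\cdot z),
\]
so $t\mapsto e^{t\xi}\cdot z$ is an integral curve of $X_h$ through $z$, and by uniqueness it coincides with $\bar z(t)$. The equivalence (i) $\Leftrightarrow$ (iv) then reduces to a local lifting argument: since $G\to G/G_z$ is a principal bundle, any smooth curve $\bar z(t)$ in $G\cdot z$ with $\bar z(0)=z$ can locally be written as $g(t)\cdot z$ with $g(0)=e$, and setting $\xi := \dot g(0)$ yields $X_h(z) = \xi_\PP(z)$; the previous step upgrades this to $\bar z(t)=e^{t\xi}\cdot z$. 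Finally (i) $\Leftrightarrow$ (ii) follows from $G$-equivariance of the flow: the integral curve through $g\cdot z$ is $g\cdot\bar z(t)$, which lies in $G\cdot z$ if and only if $\bar z(t)$ does.

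The one place that requires genuine content is the assertion $\xi\in\g_\mu$ in statements (iii)--(v). This is a consequence of Noether's theorem: once we know that $\bar z(t)=e^{t\xi}\cdot z$ is an integral curve of $X_h$, equivariance of $\J$ yields
\[
\mu = \J(\bar z(t)) = \Ad^*_{e^{-t\xi}}\mu \quad \text{for all } t,
\]
and differentiating at $t=0$ gives $\ad^*_\xi\mu = 0$, i.e.\ $\xi\in\g_\mu$. I expect this to be the subtlest point: it is where the momentum map structure (and not merely Hamiltonian equivariance) enters essentially. Note that $\xi$ is unique only modulo $\g_z$, since $\eta_\PP(z)=0$ for any $\eta\in\g_z$; this ambiguity plays no role in the statement but will matter later in the ``bundle equations with isotropy'' of the next sections.
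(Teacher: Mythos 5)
Your proof is correct and complete; the one point needing care --- that the $\xi$ produced by the lifting argument automatically lies in $\g_\mu$ --- is handled properly via Noether's theorem and the equivariance of $\J$. The paper itself gives no proof of Proposition~\ref{charRE}, merely citing it as standard from \cite{Mars92}, and your argument (symplectic duality for (iii)$\Leftrightarrow$(v), equivariance plus uniqueness of integral curves for (iii)$\Leftrightarrow$(iv), a local lift through $G\to G/G_z$ for (i)$\Rightarrow$(iii), and flow equivariance for (i)$\Leftrightarrow$(ii)) is precisely the standard one.
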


Here and throughout, we use $\leq$ to denote the relation of being a Lie subalgebra or a closed subgroup (according to context), and $\lhd$ to denote the relation of being a normal subgroup, or an ideal in a Lie algebra. We denote the normalizer of a subgroup $H$ in $G$ by $N_G(H)$.

\begin{rem}\label{remRE}
The element $\xi$ associated to $z$ is called a velocity for the
relative equilibrium. Notice that $\xi$ is not uniquely defined if
$z$ has continuous isotropy, and this is a key observation that will follow a long way. The isotropy group, or stabilizer, of $z$, $G_z=\{g\in
G\,:\,g\cdot z=z\}$ is a compact Lie subgroup of $G$ by the
properness hypothesis on the action. If $G_z$ has positive
dimension it has a non-trivial Lie algebra $\g_z$ whose elements $\eta$ satisfy by
definition $\eta_\PP(z)=0$.

Notice also that
the equivariance property of $\J$ implies in particular that
$\g_z\leq \g_\mu$. Therefore, by (iii) in Proposition
\ref{charRE} if $\xi$ is a velocity for $z$, so is $\xi+\eta$ for
any $\eta\in \g_z$. The converse is also true: any two admissible  velocities for $z$ must differ
by an element of $\g_z$. Also note that it follows again from
$(iii)$ and $(iv)$ in the same proposition that if $\xi$ is a velocity for $z$, then $\Ad_g\xi$ is a
velocity for the relative equilibrium $g\cdot z$.
\end{rem}

\begin{rem} \label{rmk:modified action}
It was stated earlier that we are assuming the momentum map to be equivariant with respect to the coadjoint action $\g^*$.  If $G$ is compact, this can always be arranged by an averaging argument  \cite{Mon97,OrRa03}.  However, for more general groups, $\J$ may not be able to be chosen to be equivariant in this sense.  In such cases, there is a cocycle $\theta:G\to\g^*$ (the `Souriau cocycle') for which the resulting modified action $\mathrm{Coad}^\theta_g\mu:=\Ad_{g^{-1}}^*\mu+\theta(g)$ renders the momentum map equivariant: $\J(g\cdot z) = \mathrm{Coad}^\theta_g\J(z)$.  Moreover, as shown in \cite{SchWu01} there is an analogous local model for the symplectic action of $G$ and similar bundle equations.
  It follows that all the results of this paper stated for the coadjoint action also hold in this more general case of a modified coadjoint action, provided the momentum isotropy group $G_\mu$ is understood to be relative to this modified action.
\end{rem}

We state a well-known algebraic lemma.

\begin{lemma} \label{lemma:algebraic}
Let $H$ be a compact subgroup of the Lie group $G$, and let $\g = \h\oplus \m$ be an $\Ad(H)$-invariant decomposition (as vector spaces).
\begin{itemize}
 \item[(i)] The subspace of fixed points $\g^H$ is a Lie subalgebra of $\g$, and the Lie bracket on $\g^H$ descends to one on $\m^H$.
 \item[(ii)] The Lie algebra of $N_G(H)$ satisfies $\LieAlg(N_G(H))=\h\oplus \m^H = \h+\g^H$. Consequently, if $L=N_G(H)/H$, and $\lil$ is  its Lie algebra, then $\lil\simeq\m^H$ as Lie algebras.
 \item[(iii)] If $H$ is a normal subgroup of $G$ then $\m^H=\m$.
\end{itemize}
\end{lemma}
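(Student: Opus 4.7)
Part (i) is straightforward: the $\Ad(H)$-equivariance of the bracket gives $\Ad_h[x,y]=[\Ad_h x,\Ad_h y]=[x,y]$ for $x,y\in\g^H$, so $\g^H$ is a Lie subalgebra. Differentiating $\Ad_{\exp(ty)}x=x$ at $t=0$ for $x\in\g^H$, $y\in\h$ yields $[y,x]=0$, so $\h^H := \h\cap\g^H$ lies in the center of $\g^H$. Since the decomposition $\g=\h\oplus\m$ is $\Ad(H)$-invariant, taking $H$-fixed points yields $\g^H = \h^H\oplus\m^H$, and the bracket on $\g^H$ descends through the quotient by $\h^H$ to a well-defined Lie bracket on $\m^H$.

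For part (ii), the inclusion $\h\oplus\m^H \subseteq \LieAlg(N_G(H))$ is easy: $\h=\LieAlg(H)$ sits inside trivially since $H\subseteq N_G(H)$, and for $x\in\m^H$ the relation $\Ad_h x=x$ gives $h\exp(tx)h^{-1}=\exp(tx)$ for all $t$, so $\exp(tx)$ centralizes $H$ and in particular normalizes it. The main technical step is the reverse inclusion. Let $x\in\LieAlg(N_G(H))$ and $h\in H$; the curve $t\mapsto\exp(tx)h\exp(-tx)$ is continuous, lies in $H$, and equals $h$ at $t=0$, so for small $t$ it remains in the connected component $hH^\circ$. Hence $h^{-1}\exp(tx)h\exp(-tx)\in H^\circ$, and differentiating at $t=0$ yields $\Ad_{h^{-1}}x-x\in\h$. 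Decomposing $x=x_\h+x_\m$ along the $\Ad(H)$-invariant splitting and using that $\Ad(H)$ preserves both summands, the $\m$-component of this element is $\Ad_{h^{-1}}x_\m-x_\m$, which must vanish, so $x_\m\in\m^H$. The equality $\h\oplus\m^H=\h+\g^H$ follows from $\g^H=\h^H\oplus\m^H$ with $\h^H\subseteq\h$. Finally, $\h$ is an ideal of $\LieAlg(N_G(H))$ since $H\lhd N_G(H)$, so the quotient map $\h\oplus\m^H \twoheadrightarrow \m^H$ identifies $\lil=\LieAlg(N_G(H))/\h$ with $\m^H$ equipped with the bracket of part (i).

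Part (iii) is immediate from (ii): if $H\lhd G$ then $N_G(H)=G$, so $\g=\LieAlg(N_G(H))=\h\oplus\m^H$, and comparison with $\g=\h\oplus\m$ forces $\m^H=\m$. The main obstacle is the disconnected case of (ii): for connected $H$ one can bypass the component argument by identifying $\LieAlg(N_G(H))$ with the algebraic normalizer $\{x : [x,\h]\subseteq\h\}$, but when $H$ is disconnected this identification can fail (e.g.\ for finite $H$ in $\mathrm{SO}(3)$) and the continuity argument tracking $\exp(tx)h\exp(-tx)$ inside its component becomes essential.
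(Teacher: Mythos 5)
Your proof is correct and takes essentially the same route as the paper's: both establish (i) by noting $\h^H$ is central in $\g^H$, prove (ii) by reducing membership in $\LieAlg(N_G(H))$ to the linear condition $\Ad_h\xi-\xi\in\h$ and then reading off the $\m$-component, and deduce (iii) from $N_G(H)=G$. The only difference is that you carefully justify the differentiation step for disconnected $H$ (tracking the curve $\exp(tx)h\exp(-tx)$ inside the component $hH^\circ$), a point the paper passes over in a single sentence; this is a welcome elaboration rather than a departure.
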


In fact $\m^H=\m$ is equivalent to the (weaker) statement that $N_G(H)$ contains the connected component $G_0$ of $G$.

\begin{proof} The decomposition follows by using an $\Ad(H)$-invariant inner product, which exists because $H$ is compact, and defining $\m=\h^\perp$.

(i) It is easy to check that $\g^H$ is a Lie subalgebra of $\g$.  Now, $\h^H$ is in the centre of $\g^H$, and it follows that the Lie bracket descends to $\m^H\simeq \g^H/\h^H$.

(ii) For any $\xi\in\g$ write $\xi=\eta+\xi^\perp\in\h\oplus\m$.  Now $g\in N_G(H)$ means $gHg^{-1}=H$, and putting $g=\exp(t\xi)$ and differentiating at $t=0$ shows $\xi
\in\n:=\LieAlg(N_G(H))$ if and only if $\xi$ satisfies the (linear) condition
$$\Ad_h\xi-\xi\in\h,\quad\forall h\in H.$$
Writing $\xi=\eta+\xi^\perp\in\h\oplus\m$ (an $\Ad(H)$-invariant decomposition), this condition becomes
$$\Ad_h\xi^\perp-\xi^\perp=0,$$
which is equivalent to $\xi^\perp\in\m^H$, as required.

(iii) This follows immediately from the fact that $\n=\h+\m^H$.
\end{proof}

\subsection{The bundle equations with isotropy}\label{subsec bundle eqs}
\ \\
Since we are interested in the local properties of a Hamiltonian flow
in a neighbourhood of a relative equilibrium, we will substitute the phase space
$\PP$ by the symplectic tubular neighbourhood given by the
Marle-Guillemin-Sternberg (MGS) model. Originally due to Marle \cite{Mar85} and Guillemin and Sternberg \cite{GuSt84}, it is now standard and details can be found for
instance in \cite{OrRa03}.  We briefly recall its construction.

Let $z\in \PP$ be an arbitrary point with
momentum $\J(z)=\mu$. Let $\g_z$ and $\g_\mu$ be the Lie algebras of
the stabilizers of $z$ and $\mu$ as before. We will assume through
this paper that $\mu$ is a split element of $\g^*$, meaning that
there is a $G_\mu$-coadjoint invariant splitting
$\g=\g_\mu\oplus\q$.
Examples of split momentum values are the cases of compact or
Abelian $G_\mu$. For an example of non-split momentum see
\cite{RoWuLa02}.

Since by the equivariance of $\J$ we have $G_z\leq
G_\mu$ (we use $\leq$ to mean `is a closed subgroup of'), and since $G_z$ is compact, we can find a $G_z$-invariant
splitting
\begin{equation}\label{gmusplitting}\g_\mu=\m\oplus\g_z,\end{equation} with associated dual invariant splitting
$\g_\mu^*=\m^*\oplus\g_z^*$. Here $\g_z^*$ and $\m^*$ are identified
with the annihilators of $\m$ and $\g_z$ in $\g_\mu^*$ respectively.
Finally, combining these splittings of the Lie algebras, we
have a $G_z$-invariant splitting $\g=\m\oplus\g_z\oplus \q$. For
each of these splittings we will denote by $\Proj_\lil$ the
projection onto the factor $\lil$.

Next, choose $N=N_z$ to be a  complement to $\g_\mu\cdot z$ in $\ker
T_z\J$, $G_z$-invariant with respect to the induced linear
representation of $G_z$ on $T_z\PP$. It follows from the general
properties of Witt-Artin decompositions for Hamiltonian actions that
such a choice is always possible. Moreover, $(N,\Omega)$ is a symplectic vector space, where $\Omega$ is the
restriction to $N$ of $\w(z)$,  and the
linear action of $G_z$ on $N$ is Hamiltonian, with associated equivariant
momentum map $\J_N:N\rightarrow \g_z^*$ defined by
\begin{equation}\label{JN}\langle \J_N(v),\eta\rangle=\frac 12\,\Omega(\eta\cdot v,v),\quad \forall \eta\in \g_z,\,v\in N,\end{equation}
where $\eta\cdot v=\frac{d}{dt}\rrestr{t=0}e^{t\eta}\cdot v$ is the infinitesimal generator at $v$ corresponding to $\eta\in\g_z$.  The space $N$ is usually called the symplectic normal space (or symplectic slice) at $z$ for the $G$-action on $\PP$.

The product space $G\times \m^*\times N$ supports free actions of both $G$ and $G_z$, given by
\begin{eqnarray}
\label{GYaction} g'\cdot (g,\rho,v) & = & (g'g,\rho,v)\quad\quad g'\in G\\
\label{GxYaction} h\cdot (g,\rho,v) & = & (gh^{-1},\Ad_{h^{-1}}^*\rho,h\cdot v)\quad\quad h\in G_z
\end{eqnarray}
for all $g\in G,\,\rho\in\m^*$ and $v\in N$. It is clear that these
actions are free and commute. Consider the principal bundle
associated to the $G_z$-action
\begin{eqnarray*}\pi:G\times \m^*\times N & \rightarrow & G\times_{G_z}(\m^*\times N)\\
(g,\rho,v) & \mapsto & [g,\rho,v],
\end{eqnarray*}
where $[g,\rho,v]$ is the equivalence class of points consisting of the orbit $G_z\cdot (g,\rho,v)\subset G\times \m^*\times N$. The $G$-action \eqref{GYaction} on $G\times \m^*\times N$ descends to a smooth action on $\mgs$ given by
\begin{equation}\label{tubeaction}g'\cdot[g,\rho,v]=[g'g,\rho,v]\quad\quad g'\in G.\end{equation}
It follows from the MGS construction that there
are open neighbourhoods  of the origin in $\m^*$ and $N$
for which it is possible to define a local symplectic form on
 $\mgs$, as well as a local $G$-equivariant
symplectomorphism
$$\varphi:\mgs\rightarrow \PP$$
onto an open $G$-invariant neighbourhood of the orbit $G\cdot z$, and $\varphi$ satisfies
$\varphi([e,0,0])=z$. We will denote by $Y$ the domain of this
diffeomorphism in $\mgs$. The concrete expression of the mentioned symplectic form $\w_Y$ can be given as
follows: Every tangent vector to $Y$ can be written as
$\gamma_{\lambda,\dot\rho,\dot v}([g,\rho,v])\in T_{[g,\rho,v]}Y$
with
$$\gamma_{\lambda,\dot\rho,\dot v}([g,\rho,v])=T_{(g,\rho,v)}\pi(g\cdot\lambda,\dot\rho,\dot v),$$
where $\lambda\in\g, \dot\rho\in \m^*$ and $\dot v\in N$. Here,
$g\cdot \lambda$ is the concatenation notation for $T_eL_g\lambda
\in T_gG$, where $L:G\times G\rightarrow G$ is the left translation
on $G$, given by $L_{g'}g=g'g$. We then have

\begin{equation}
\label{wY}
\begin{array}{ccc}  \w_Y(\gamma_{\lambda_1,\dot\rho_1,\dot v_1}([g,\rho,v]),\gamma_{\lambda_2,\dot\rho_2,\dot v_2}([g,\rho,v]))  \hskip-3cm  \\[4pt]
 &= &    \langle\dot\rho_2+T_v\J_N(\dot v_2),\lambda_1\rangle-\langle\dot\rho_1+T_v\J_N(\dot v_1),\lambda_2\rangle\\
 && +\langle\mu+\rho+\J_N(v),[\lambda_1,\lambda_2]\rangle+\Omega(\dot
v_1,\dot v_2).
\end{array}
\end{equation}

In addition, the induced $G$-action on $(Y,\w_Y)$ given by \eqref{tubeaction} is Hamiltonian with a
locally defined equivariant momentum map
$\J_Y=\J\circ\varphi:Y\rightarrow \g^*$. The explicit expression for
$\J_Y$ is very simple and provides in this local model  a normal
form for $\J$. It is given by
\begin{equation}\label{JY}\J_Y([g,\rho,v])=\Ad_{g^{-1}}^*(\mu+\rho+\J_N(v)).\end{equation}

The idea exploited in \cite{RoSD97} and \cite{RoWuLa02} consists in taking the pullback
by $\varphi$ of the Hamiltonian vector field $X_h$ to $Y$ and then
lifting it to $G\times \m^*\times N$. Then the
differential equations for the flow of a choice of lifted vector
field are obtained, providing a general framework to study the local
dynamics of $X_h$ near $G\cdot z$.

In order to state these equations, note first that $h\circ \varphi$
is a $G$-invariant function on $Y$, so it lifts to a $G\times
G_z$-invariant function $h\circ\varphi\circ\pi$ on
$G\times\m^*\times N$. From equations \eqref{GYaction} and
\eqref{GxYaction} it follows that
\begin{eqnarray*}(h\circ\varphi\circ\pi)(g,\rho,v) & = & (\varphi\circ\pi)^*h(g\cdot (e,\rho,v))\ =\  (\varphi\circ\pi)^*h(e,\rho,v)\\
(h\circ\pi\circ\varphi)(l\cdot(g,\rho,v)) & = & (\varphi\circ\pi)^*h(gl^{-1},\Ad_{l^{-1}}^*\rho,l\cdot v)=(\varphi\circ\pi)^*h(e,\Ad_{l^{-1}}^*\rho,l\cdot v),
\end{eqnarray*}
for all $g\in G,\,l\in G_z,\,\rho\in\m^*$ and $v\in N$.
Therefore we can identify $h\circ\varphi\circ\pi$ with a $G_z$-invariant function on $\m^*\times N$ that we will denote by
\begin{equation}\label{eq:h-bar}
\bh\in C^\infty(\m^*\times N)^{G_z}.
\end{equation}

Since $\pi$ is a locally trivial fibration, a $\pi$-projectable
local vector field $X\in\mathfrak{X}(G\times \m^*\times N)$ can be
expressed as
$$X(g,\rho,v)=(g\cdot(X_{\g}(g,\rho,v)),X_{\m^*}(g,\rho,v),X_{N}(g,\rho,v)),$$
where $X_{\g}(g,\rho,v)\in\g,\,X_{\m^*}(g,\rho,v)\in\m^*$, and
$X_{N}(g,\rho,v)\in N$. The bundle equations on $G\times\m^*\times
N$ define a vector field on $G\times\m^*\times N$ whose components are given by:
\begin{eqnarray*}
X_{\g}(g,\rho,v) & = &  D_{\m^*}\bh(\rho,v)\\
X_{\m^*}(g,\rho,v) & = & \Proj_{\m^*}\left(\ad^*_{D_{\m^*}\bh(\rho,v)}(\rho+\J_N(v))\right)\\
X_N(g,\rho,v) & = & \Omega^\sharp(D_N\bh(\rho,v)).
\end{eqnarray*}
Here $D_{\m^*}\bh$ and $D_N\bh$ denote the partial derivatives of
$h\in C^\infty(\m^*\times N)^{G_z}$ with respect to $\m^*$ and $N$
respectively, and $\Omega^\sharp:N^*\rightarrow N$ is the linear
$G_z$-equivariant isomorphism induced from $\Omega$. As shown in
\cite{RoSD97,RoWuLa02,OrRa03}, these are the differential equations
for the flow of the unique $\pi$-projectable local vector field $X$
on $G\times \m^*\times N$ which is a lift of
$\varphi^*X_h\in\mathfrak{X}(G\times_{G_z}(\m^*\times N))$ (i.e.
such that $X$ and $\varphi^*X_h$ are $\pi$-related$)$ and satisfies
the additional condition \begin{equation}\label{choicelift}\Proj_{\g_z}(X_\g)=0.\end{equation}
Notice that since $D_{\m^*}\bh(\rho,v)\in \m$, the above equations imply that $\Proj_{\q}(X_\g)=0$, which is a consequence of the condition on $\mu$ to be split, and it is not related to the condition \eqref{choicelift}. Notice also that since $\varphi$ is a $G$-equivariant symplectomorphism, $\varphi^*X_h$ is actually $X_{h\circ \varphi}$, the Hamiltonian vector field corresponding to the local $G$-Hamiltonian system $(Y,\w_Y,G,\J_Y,h\circ \varphi)$, which is a local model for $(\PP,\w,G,\J,h)$.

However, in order to obtain all the projectable vector fields on $G\times \m^*\times N$ that
are $\pi$-related to $\varphi^*X_h$ one needs to include all vertical vector
fields tangent to the $\pi$-fibres. These are, in view of
\eqref{GxYaction}, of the  form
$$X(g,\rho,v)=(g\cdot\eta,\ad_\eta^*\rho,-\eta\cdot v),$$
where $\eta:G\times \m^*\times N\rightarrow \g_z$ is an arbitrary
smooth map.
In the following we will fix $\eta$ to be an arbitrary
constant in $\g_z$, since the vector fields obtained in this way
generate the module of vertical vector fields.

Using the facts that $\ad^*_\eta\rho\in\m^*$ (since $\m^*$ is
$G_{z}$-invariant), $\ad^*_\eta(\J_N(v))\in \g_z^*$ (since
$\J_N(v)\in \g_z^*$), and $\eta\cdot v=\Omega^\sharp(\ed\langle
\J_N(\cdot),\eta\rangle)$ (since the $G_z$-action on $N$ is Hamiltonian), we can write the equations of the flow of
the most general local vector field on $G\times \m^*\times N$ that
projects to $\varphi^*X_h$ as
\begin{eqnarray}
\label{genbundle1}\dot g & = & g\cdot(D_{\m^*}\bh(\rho,v)+\eta)\\
\label{genbundle2}\dot \rho & = & \Proj_{\m^*}\left(\ad^*_{D_{\m^*}\bh(\rho,v)+\eta}(\rho+\J_N(v))\right)\\
\label{genbundle3}\dot v & = & \Omega^\sharp(D_N\bh(\rho,v)-\ed\J_N^\eta(v)).
\end{eqnarray}
where $\eta\in\g_z$ is arbitrary and the function $\J_N^\eta$ is defined by $\J_N^\eta(v):=\langle\J_N(v),\eta\rangle$ for all $\eta\in\g_z$ and $v\in N$.

We also recall for future reference the following identity, which is
satisfied for integral curves of the above flow (see equation
(7.7.9) in \cite{OrRa03}).
\begin{equation}\label{impliedcond}\Proj_{\g_z^*}\left(\ad^*_{D_{\m^*}\bh(\rho,v)+\eta}\left(\rho+\J_N(v)\right)\right)=T_v\J_N(\dot v).\end{equation}

\subsection{Characterization of relative equilibria}\label{subsec charac}
\ \\
We can now reduce the problem of finding relative equilibria of the
$G$-Hamiltonian system $(\PP,\w,G,\J,h)$ near $z\in\PP$ to find
solutions of
\eqref{genbundle1},\eqref{genbundle2},\eqref{genbundle3} that
project to relative equilibria of $X_{h\circ\varphi}$ near $[e,0,0]$
in $Y$. We then have the following characterization.

\begin{prop}\label{prop cond RE}
A point $[g,\rho,v]$ near $z=[e,0,0]\in Y$ is a relative equilibrium
for $X_{h\circ\varphi}$ if and only if there exists an element
$\eta\in\g_z$ such that
\begin{eqnarray}
\label{re1} \ad^*_{D_{\m^*}\bh(\rho,v)+\eta}(\rho+\J_N(v)) & = & 0, \quad \text{and}\\
\label{re2} D_N\bh_\eta(\rho,v) & = & 0,
\end{eqnarray}
or equivalently
\begin{eqnarray}
\label{req1}\Proj_{\m^*}\left( \ad^*_{D_{\m^*}\bh(\rho,v)+\eta}(\rho+\J_N(v))\right) & = & 0, \quad \text{and}\\
\label{req2} D_N\bh_\eta(\rho,v) & = & 0,
\end{eqnarray}
where
$$\bh_\eta(\rho,v):=\bh(\rho,v)-\J_N^\eta(v).$$
The element of $\g$ given by $\Ad_{g}(D_{\m^*}h(\rho,v)+\eta)$ is a velocity for the relative equilibrium $[g,\rho,v]$.
\end{prop}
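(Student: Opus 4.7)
The plan is to use characterization (iv) of Proposition \ref{charRE}: the point $[g,\rho,v]$ is a relative equilibrium if and only if the integral curve of $X_{h\circ\varphi}$ through it stays in the $G$-orbit $G\cdot[g,\rho,v]=\{[g'g,\rho,v]\mid g'\in G\}$, equivalently if $\rho$ and $v$ remain constant along the trajectory. Since every lift of $X_{h\circ\varphi}$ to $G\times\m^*\times N$ arises from the bundle equations \eqref{genbundle1}--\eqref{genbundle3} for some $\eta\in\g_z$, and the $(\rho,v)$ components of a lift coincide with those of its projection to $Y$, the question reduces to: does there exist $\eta\in\g_z$ such that the right-hand sides of \eqref{genbundle2} and \eqref{genbundle3} vanish at $(g,\rho,v)$?

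First I would treat equation \eqref{genbundle3}. Since $\Omega^\sharp$ is a linear isomorphism, $\dot v=0$ is equivalent to $D_N\bh(\rho,v)-\ed\J_N^\eta(v)=0$. Using $\bh_\eta=\bh-\J_N^\eta$, this is exactly \eqref{re2} (which coincides with \eqref{req2}). Next, $\dot\rho=0$ in \eqref{genbundle2} gives \eqref{req1} directly. This already shows the ``equivalently'' form of the characterization.

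To pass from \eqref{req1} to the stronger-looking \eqref{re1}, I would invoke the identity \eqref{impliedcond}, which is valid along any integral curve of the flow. Once $\dot v=0$ holds, the right-hand side $T_v\J_N(\dot v)$ of \eqref{impliedcond} vanishes, so the $\g_z^*$-component of $\ad^*_{D_{\m^*}\bh(\rho,v)+\eta}(\rho+\J_N(v))$ also vanishes. Since the ``split'' hypothesis on $\mu$ means $\q$ is $G_\mu$-invariant (equivalently $[\g_\mu,\q]\subset\q$), we have $\ad^*_\xi(\g_\mu^*)\subset\g_\mu^*$ for all $\xi\in\g_\mu$, where $\g_\mu^*$ is identified with the annihilator of $\q$. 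As $D_{\m^*}\bh(\rho,v)+\eta\in\m\oplus\g_z=\g_\mu$ and $\rho+\J_N(v)\in\m^*\oplus\g_z^*=\g_\mu^*$, the coadjoint bracket lies in $\g_\mu^*=\m^*\oplus\g_z^*$; vanishing of both components is equivalent to \eqref{re1}. This establishes the equivalence between the two systems.

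Finally, for the velocity claim, the curve $g(t)=g\exp\bigl(t(D_{\m^*}\bh(\rho,v)+\eta)\bigr)$ solves \eqref{genbundle1} when $\dot\rho=\dot v=0$, so the integral curve in $Y$ is $[g\exp(t(D_{\m^*}\bh+\eta)),\rho,v]$. To write this as $e^{t\xi}\cdot[g,\rho,v]=[e^{t\xi}g,\rho,v]$ one takes $e^{t\xi}g=g\exp(t(D_{\m^*}\bh+\eta))$, which yields $\xi=\Ad_g(D_{\m^*}\bh(\rho,v)+\eta)$ after differentiating at $t=0$. The main subtlety in the whole argument is the last paragraph's manipulation: one must carefully track that the output of the coadjoint bracket really lies in $\g_\mu^*$ in order to conclude the equivalence of \eqref{re1} with \eqref{req1}+\eqref{req2}; this is precisely where the splittings of $\g$ and $\g_\mu$ that were set up in Section \ref{subsec bundle eqs} are used.
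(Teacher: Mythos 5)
Your proof is correct and follows essentially the same route as the paper's: reduce to the bundle equations with the free parameter $\eta\in\g_z$ absorbing the vertical component, use the identity \eqref{impliedcond} with $\dot v=0$ to upgrade \eqref{req1} to \eqref{re1} via the splitting $\g_\mu^*=\m^*\oplus\g_z^*$, and read off the velocity $\Ad_g(D_{\m^*}\bh(\rho,v)+\eta)$ from \eqref{genbundle1}. The only cosmetic difference is that the paper first reduces to the point $[e,\rho,v]$ and phrases the orbit condition as $\frac{d}{dt}\rrestr{t=0}[e,\rho,v](t)=\xi_Y([e,\rho,v])$ with an explicit $\eta'\in\g_z$ accounting for the choice of representative curve, whereas you phrase it as the existence of a lift with vanishing $(\m^*,N)$-components; these are the same argument.
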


\begin{proof}
Let $[g,\rho,v]$ be in the domain $Y$ of the diffeomorphism $\varphi$. First, by Remark \ref{remRE}, $[g,\rho,v]$ is a relative equilibrium if and only if $[e,\rho,v]$ also is, so we can restrict the study to points of this form. Let $[e,\rho,v](t)$ be the integral curve of $\varphi^*X_h$ with initial condition $[e,\rho,v](0)=[e,\rho,v]$. Therefore, we can find representative curves $g(t),\rho(t),v(t)$ with initial conditions $g(0)=e,\rho(0)=\rho,v(0)=v$, respectively, such that $[e,\rho,v](t)=[g(t),\rho(t),v(t)]$. According to Proposition \ref{charRE}, $[e,\rho,v]$ is a relative equilibrium for the local $G$-Hamiltonian system on $\mgs$ corresponding to the Hamiltonian vector field $\varphi^*X_h=X_{h\circ \varphi}$, with velocity $\xi\in \g$ if and only if
$$\frac{d}{dt}\restr{t=0}[e,\rho,v](t)=\xi_Y([e,\rho,v]).$$
This is equivalent, in view of \eqref{GYaction} to
\begin{eqnarray*} \dot g(0) & = & \xi+\eta'\\
\dot\rho (0) & = & \ad^*_{\eta'}\rho\\
\dot v (0) & = & -\eta'\cdot v.
\end{eqnarray*}
for some $\eta'\in\g_z$. Since in the equations \eqref{genbundle1},\eqref{genbundle2},\eqref{genbundle3}, $\eta$ is arbitrary, we can absorb $\eta'$ into $\eta$ so that  the relative equilibrium conditions are
\begin{eqnarray}
\xi & = & D_{\m^*}\bh(\rho,v)+\eta\nonumber\\
\label{proof1}0 & = & \Proj_{\m^*}\left(\ad^*_{D_{\m^*}\bh(\rho,v)+\eta}(\rho+\J_N(v))\right)\\
0 & = & D_N\bh_\eta(\rho,v) \nonumber
\end{eqnarray}
Note that using \eqref{genbundle1},\eqref{genbundle2},\eqref{genbundle3} with this choice of isotropy $\eta$, we have $\dot v(0)=0$. Therefore, from \eqref{impliedcond} it follows that $\Proj_{\g_z^*}\left(\ad^*_{D_{\m^*}\bh(\rho,v)+\eta}(\rho+\J_N(v))\right)=0$. This, together with \eqref{proof1} is equivalent to $\ad^*_{D_{\m^*}\bh(\rho,v)+\eta}(\rho+\J_N(v))=0$, since $D_{\m^*}\bh(\rho,v)+\eta\in\g_\mu$, $\rho+\J_N(v)\in\g_\mu^*$ and $\g_\mu^*=\g_z^*\oplus\m^*$. Therefore we have obtained that $[e,\rho,v]$ is a relative equilibrium on $G\times_{G_z}(\m^*\times N)$ near $[e,0,0]$ with velocity $\xi$ if and only if there exists $\eta\in \g_z$ such that $\xi=D_{\m^*}\bh(\rho,v)+\eta$ and $D_N\bh_\eta(\rho,v)=0$. And by Remark \ref{remRE}, this is equivalent to $[g,\rho,v]$ being a relative equilibrium with velocity $\Ad_g(D_{\m^*}\bh(\rho,v)+\eta)$.
\end{proof}

\begin{rem}
Note that \eqref{genbundle2} implies that the $\m^*$-components of the vector fields on $G\times\m^*\times N$ that are $\pi$-related to $X_{h\circ\varphi}$ are always tangent to  the coadjoint orbit of $G_\mu$ which contains $\rho+\J_N(v)$. This equation can be studied by topological methods, as in \cite{Mon97}. It can also be simplified by imposing conditions on $G_\mu$. For instance, if $G_\mu$ is Abelian, \eqref{re1} is automatically satisfied. Also, the study of some dynamical properties can be simplified if these coadjoint orbits are bounded, as we shall see in Theorem \ref{stability}. On the other hand, equation \eqref{re2} is a critical point equation for a family of  functions on $N$ parametrized by $(\eta,\rho)\in\g_z\times \m^*$. Therefore, \eqref{re2} is naturally suited to be studied by methods based on the implicit function theorem or singularity theory. Note also that in general $\bh_\eta\in C^\infty(\m^*\times N)$ is no longer $G_z$-invariant, due to the presence of $\eta$, but only $(G_z)_\eta$-invariant.
\end{rem}

\subsection{Two lemmas}\label{subsec lemmas}
\ \\
We will now focus on the solutions of equation \eqref{re2}
 under the initial assumption that
$0\in N$ is a critical point of $\bh_\eta(0,\cdot)$ for some
$\eta\in\g_z$.  Notice that the dependence of $\bh_\eta$ on
$\eta$ is only through the term $\J_N^\eta$ and this is a quadratic
polynomial on $N$. It follows that if $0$ is a critical point of
$\bh_\eta(\rho,\cdot)$ for some $\eta\in\g_z$ then $0$ is also a
critical point of $\bh_{\eta'}(\rho,\cdot)$ for any other element in
$\eta'\in\g_z$, since $0$ is always a critical point of
$\J^{\eta'}_N$. The problem consists then in finding triples
$(\rho,\eta,v)\in \m^*\times \g_z\times N$ satisfying
$$D_N\bh_\eta(\rho,v)=0,$$
under the initial assumption $$D_N\bh_{\eta'}(0,0)=0\quad \text{for
every} \quad \eta'\in\g_z.$$ In order to attack this problem,
we state two main lemmas, based respectively on non-degeneracy
and degeneracy properties of suitable restrictions of a function
on $\g_z\times\m^*\times N$. These two lemmas will be used in most
of the  results of this paper.

Recall that if $G$ acts on a set $X$, and $K$ is a subgroup of $G$ then one writes
$$X^K=\{x\in X\,:\,k\cdot x=x\quad\text{for all}\quad k\in K\}$$
for the fixed point set. Moreover, the action of $G$ on $X$ restricts to an action of the normalizer $N_G(K)$ on $X^K$, and hence defines an action on $X^K$ of $N_G(K)/K$.   Before stating the lemmas, we recall the Principle of Symmetric Criticality, due to Palais \cite{Pal79}, which we will have recourse to a number of times.

\medskip

\noindent\textbf{Principle of Symmetric Criticality} \emph{Suppose a Lie group $G$ acts properly on a manifold $M$, and let $f:M\to\R$ be a smooth $G$-invariant function.  Let $x\in M^H$. Then $\ed f(x)\in (T_x^*M)^H$, which as $H$ is compact can be identified with $T_x^*(M^H)$.  In particular, it follows that if $x$ is a critical point of $f\rrestr{M^H}$ then it is a critical point of $f$.}

\medskip
We now have the first of the technical lemmas.

\begin{lemma}[\bf non-degeneracies]\label{lemmaND}
Suppose that $H\leq G_z$ and let $f\in C^\infty(\g_z\times\m^*\times N)^H$ be a smooth $H$-invariant function.
Suppose there is a subgroup $K\leq H$,  satisfying
$$
\begin{array}{l}D_Nf(0,0,0)=0,\quad \text{and} \\[4pt]
D^2_{N}f(0,0,0)\rrestr{N^{K}}\quad\text{is non-degenerate}.
\end{array}$$
Then, there is a unique local (defined in a neighbourhood of $(0,0)$) smooth $N_{H}(K)$-equivariant map
$v:{\m^*}^{K}\times\g_z^{K}\rightarrow N^{K}$ satisfying $v(0,0)=0$
and
$$D_Nf(\eta',\rho,v(\rho,\eta'))=0$$
for every $(\rho,\eta')\in {\m^*}^{K}\times\g_z^{K}$ near $(0,0)$.

 The stabilizer of
$m=(\rho,v(\rho,\eta'))\in{\m^*}^K\times N^K$ is
$(G_z)_m=(G_z)_\rho\cap (G_z)_{v(\rho,\eta')}$ and satisfies
$$K\leq (G_z)_{m}\leq (G_z)_\rho.$$
\end{lemma}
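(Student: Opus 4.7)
The plan is to reduce the problem to the $K$-fixed subspaces, apply the classical implicit function theorem there, and then recover $N_H(K)$-equivariance from the uniqueness clause of the IFT.

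Since $K$ is compact (as a subgroup of the compact group $G_z$), the normalizer $N_H(K)$ acts on each of the fixed-point sets $\g_z^K$, ${\m^*}^K$ and $N^K$, and the restriction $\tilde f:=f\rrestr{\g_z^K\times{\m^*}^K\times N^K}$ is $N_H(K)$-invariant. Using an $\Ad(K)$-invariant inner product on $N$, one has the identification $(N^*)^K\cong(N^K)^*$; combined with the Principle of Symmetric Criticality applied to the $K$-action on $N$ (with $\eta'\in\g_z^K$ and $\rho\in{\m^*}^K$ held fixed), this yields that, on the product of fixed-point sets, $D_Nf(\eta',\rho,v)=0$ is equivalent to $D_{N^K}\tilde f(\eta',\rho,v)=0$.

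The classical implicit function theorem now applies to the equation $D_{N^K}\tilde f=0$, with unknown $v\in N^K$ and parameters $(\rho,\eta')\in{\m^*}^K\times\g_z^K$: the value at $(0,0,0)$ vanishes by the first hypothesis, and the partial derivative in $v$ at $(0,0,0)$ is precisely $D_N^2 f(0,0,0)\rrestr{N^K}$, which is non-degenerate by the second. This produces the desired unique smooth local map $v:{\m^*}^K\times\g_z^K\to N^K$ with $v(0,0)=0$. For $N_H(K)$-equivariance, given $h\in N_H(K)$ the map $(\rho,\eta')\mapsto h^{-1}\cdot v(h\cdot\rho,h\cdot\eta')$ also solves the equation (by $H$-invariance of $f$, which implies $D_Nf$ transforms by the contragredient action) and satisfies the same initial condition, so by uniqueness it coincides with $v$; shrinking the neighbourhood to an $N_H(K)$-invariant one (possible by compactness of $N_H(K)$) handles all $h$ simultaneously.

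The stabilizer claim is then immediate: $(G_z)_m=(G_z)_\rho\cap(G_z)_{v(\rho,\eta')}$ by definition of the stabilizer of a pair, while both $\rho\in{\m^*}^K$ and $v(\rho,\eta')\in N^K$ are fixed by $K$, giving $K\leq(G_z)_m$; the inclusion $(G_z)_m\leq(G_z)_\rho$ is tautological. The only delicate step is the identification of the derivative on the fixed-point set with the restriction of the ambient derivative, which is the content of symmetric criticality together with the $K$-invariant splitting of $N$; beyond this the proof is a routine equivariant application of the IFT.
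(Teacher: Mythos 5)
Your proof is correct and follows essentially the same route as the paper's: restrict to the $K$-fixed subspaces, identify the restricted Hessian with the Hessian of the restriction, apply the implicit function theorem there, recover the full critical-point equation via the Principle of Symmetric Criticality, and deduce equivariance from uniqueness. The only difference is that you spell out the equivariance argument (via uniqueness of the IFT solution on an invariant neighbourhood) where the paper merely asserts it follows from the invariance of $f\rrestr{N^K}$.
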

\begin{proof}

 Notice that ${\m^*}^K$, $\g_z^K$
and $N^K$ support linear actions of $N_{H}(K)$, which is the maximal
subgroup of ${H}$ that leaves invariant these subspaces. By the
pull-back property of Hessians, it follows that
$D^2_{N}f(0,0,0)\rrestr{N^{K}}=D^2_{N^K}(f\rrestr{N^{K}})(0,0,0)$. If this bilinear
form is non-degenerate, it follows from the Implicit Function Theorem
that there is a unique map $v:{\m^*}^K\times\g_z^K\rightarrow N^K$
defined in a neighbourhood of $(0,0)$ such that
$$D_{N^K}(f\rrestr{N^{K}})((\eta',\rho,v(\rho,\eta'))=0$$
for every $(\rho,\eta')$ in ${\m^*}^K\times\g_z^K$, and these are
all the points in ${\m^*}^K\times\g_z^K\times N^K$ satisfying the  equation $D_{N^K}(f\rrestr{N^{K}})(\eta',\rho,v)=0$ near $(0,0,0)$. By the invariance
properties of $f\rrestr{N^{K}}$, it follows that the map $v$ is
$N_{H}(K)$-equivariant.

Since $f(\eta',\rho,\cdot)\in C^\infty(N)$ is in particular
$K$-invariant for any $(\rho,\eta')\in {\m^*}^K\times\g_z^K$, it
follows from the Principle of Symmetric Criticality, and
$v(\rho,\eta')\in N^K$, that
$$D_{N}f(\eta',\rho,v(\rho,\eta'))=0.$$
The property about the stabilizer of $(\rho,v(\rho,\eta'))$ is
obvious.
\end{proof}

The second technical lemma involves degenerate critical points, and will be applied to finding bifurcating branches of relative equilibria.  It is usual to apply methods of singularity theory (e.g., \cite{PoSt,Mon-SBC}) to study the nature of critical points when a function with a degenerate critical point is deformed. However, to avoid assumptions on higher order terms, and discussions of finite determinacy and versal unfoldings, we consider here cases that can be reduced to one variable, where one can give a general existence result.

An action of a compact Lie group $K$ on a manifold $M$ is said to be of \emph{cohomogeneity one} if the orbit space $M/K$ is 1-dimensional.  Note that any representation of cohomogeneity one is necessarily irreducible.  Some simple examples are the representations of the trivial group on $\R$, $\mathrm{SO}(n)$ on $\R^n$ and $\mathrm{SU}(n)$ on $\R^{2n}\simeq\C^n$.

Let $\sigma(u)$ be a continuous real-valued function on a topological space. We say $\sigma$  \emph{crosses 0 at} $u_0$ if $\sigma(u_0)=0$ and in any neighbourhood of $u_0$, there exist $u_1,u_2$ such that $\sigma(u_1)>0$ and $\sigma(u_2)<0$.

\begin{lemma}[\bf degeneracies]\label{lemmaD}
Let $W$ and $N$ be representations of a compact Lie group $K$, and let $f\in C^\infty(W\times N)^K$ be a smooth $K$-invariant function.  Let $L\leq K$, and $\Lambda$ a path connected open neighbourhood of the origin in $W^L$. Suppose that
$$D_N f(\lambda,0)=0\quad \text{for all}\;\lambda\in \Lambda.$$
Let $\Null:=\ker D^2_N f(0,0)\cap N^{L}$. Suppose $\Null\neq0$ and that the following two conditions are fulfilled:
\begin{itemize}
\item[(i)]  the representation of $N_K(L)$ on $\Null$ is of cohomogeneity one (and hence irreducible), and
\item[(ii)] suppose that $\sigma(\lambda)$ crosses 0 at $\lambda=0$, where $\sigma(\lambda)$ is the eigenvalue of $D^2_N f(\lambda,0)\rrestr{\Null}$ (thus $\sigma(0)=0$).
\end{itemize}
Let $N^L=\Null\oplus S$ be an $N_K(L)$-invariant decomposition. Then for each sufficiently small  $v\in \Null$ there is a $\lambda=\lambda_v\in\Lambda$ and $s=s_v\in S$ (not necessarily unique) such that
$D_Nf(\lambda_v,v, s_v)=0$.

The stabilizer of the critical point $m=(\lambda_v,v,s_v)$ is $K_m=K_{\lambda_v}\cap K_v$
which satisfies $L\leq K_m\leq N_K(L)$.
\end{lemma}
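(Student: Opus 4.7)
My plan is to reduce the equation $D_Nf=0$ to a single scalar condition by combining the Principle of Symmetric Criticality, a Lyapunov--Schmidt elimination of the non-degenerate normal directions, and the simplification afforded by the cohomogeneity-one hypothesis, and then to solve the resulting scalar equation by an intermediate value argument using path-connectedness of $\Lambda$.

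First, by symmetric criticality, a point of $W^L\times N^L$ is a critical point of $f$ in the $N$-variables if and only if it is a critical point of $f\rrestr{W^L\times N^L}$ in the $N^L$-variables. Decomposing the $N^L$-coordinates as $(v,s)\in\Null\times S$, and using that $\Null$ is the full kernel of $D_N^2f(0,0)\rrestr{N^L}$ (which by $L$-invariance of $f$ coincides with $\ker D_N^2f(0,0)\cap N^L$), I deduce that $D_N^2f(0,0)\rrestr{S}$ is non-degenerate and that the off-diagonal block $D_\Null D_Sf(0,0,0)$ vanishes. The implicit function theorem then produces a unique smooth $N_K(L)$-equivariant map $s=s(\lambda,v)\in S$ defined near the origin with $D_Sf(\lambda,v,s(\lambda,v))=0$; the standing hypothesis $D_Nf(\lambda,0)=0$ forces $s(\lambda,0)=0$. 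Setting $g(\lambda,v):=f(\lambda,v,s(\lambda,v))$, the problem reduces to finding solutions of the $N_K(L)$-invariant equation $D_\Null g(\lambda,v)=0$.

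Hypothesis (i) makes $\Null$ an irreducible representation of $N_K(L)$ whose non-zero orbits are the spheres of an invariant inner product, and hence every $N_K(L)$-invariant function on $\Null$ depends only on $t:=\|v\|^2$. So $g(\lambda,v)=G(\lambda,t)$ for a smooth $G$, and $D_\Null g(\lambda,v)=2G_t(\lambda,t)v$; for $v\ne 0$ the condition becomes the scalar equation
\[
 G_t(\lambda,\|v\|^2)=0.
\]
Schur's lemma gives $D_\Null^2g(\lambda,0)=\tilde\sigma(\lambda)\langle\cdot,\cdot\rangle$ and hence $G_t(\lambda,0)=\tfrac12\tilde\sigma(\lambda)$. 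The vanishing of the off-diagonal block at the origin makes the Lyapunov--Schmidt correction quadratic, giving $\tilde\sigma(\lambda)=\sigma(\lambda)+O(|\lambda|^2)$. Since $\sigma$ crosses $0$ at $\lambda=0$ by hypothesis (ii), the same holds for $G_t(\cdot,0)$; path-connectedness of $\Lambda$ and continuity of $G_t$ then allow, for each sufficiently small $t\ge 0$, to join an opposite-sign pair by a path in $\Lambda$ and obtain $\lambda_t\in\Lambda$ with $G_t(\lambda_t,t)=0$ by the intermediate value theorem. Setting $\lambda_v:=\lambda_{\|v\|^2}$ and $s_v:=s(\lambda_v,v)$ produces the required critical point of $f$. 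Finally, $L\le K_m$ is immediate from $m\in W^L\times N^L$; the identity $K_m=K_{\lambda_v}\cap K_v$ and the bound $K_m\le N_K(L)$ follow from the $N_K(L)$-equivariance of $s$ together with an orbit-type analysis on the cohomogeneity-one representation $\Null$.

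The main delicate point will be ensuring that the $O(|\lambda|^2)$ Lyapunov--Schmidt correction does not absorb the sign changes of $\sigma$ asserted by hypothesis (ii); this hinges on the crossing of $\sigma$ being genuinely dominant over the quadratic perturbation, which is the generic linear-crossing situation that arises in the intended applications of the lemma in Section \ref{sec bifurcations}.
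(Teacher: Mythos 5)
Your route is essentially the paper's: Symmetric Criticality to restrict to $N^L$, elimination of the non-degenerate $S$-block (you by a Lyapunov--Schmidt/implicit-function-theorem step, the paper by the equivariant Splitting Lemma --- equivalent for the purpose of locating critical points), reduction via the cohomogeneity-one hypothesis to a scalar equation in the single invariant $\|v\|^2$, and the intermediate value theorem along a path in $\Lambda$, with the correct quantifier order (fix an opposite-sign pair of parameter values first, then shrink $\|v\|$).

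The gap is precisely the one you flag at the end, and it is genuine rather than merely ``delicate''. Hypothesis (ii) concerns $\sigma(\lambda)$, the eigenvalue of the \emph{restricted} Hessian $D^2_Nf(\lambda,0)\rrestr{\Null}$, whereas your intermediate value argument needs a sign change of $2G_t(\lambda,0)=\tilde\sigma(\lambda)$, the eigenvalue of the \emph{reduced} (Schur complement) Hessian $D^2_{\Null}g(\lambda,0)=\sigma(\lambda)-B(\lambda)C(\lambda)^{-1}B(\lambda)^{T}$, where $B(\lambda)=D_{\Null}D_Sf(\lambda,0,0)$ and $C=D^2_Sf(\lambda,0,0)$. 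You rightly note that $B(0)=0$ (since $\Null$ lies in the kernel of the full Hessian at the origin), so the discrepancy is $O(|\lambda|^2)$; but the crossing in (ii) is purely topological and is not stable under such a perturbation: $\sigma(\lambda)=\lambda^3$ crosses $0$ at $0$ while $\lambda^3+\lambda^2$ does not. So ``the same holds for $G_t(\cdot,0)$'' does not follow from (ii), and your closing remark that this ``hinges on the crossing being genuinely dominant'' is an admission that the step is unproved. For what it is worth, the paper does not confront this point either: its proof writes $g(\lambda,y)=\frac12y^2\sigma(\lambda)+y^3b(\lambda,y)$, i.e.\ it silently identifies $\sigma(\lambda)$ with $2g_1(\lambda,0)$, which amounts to reading (ii) as a hypothesis on the reduced function. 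To close your version you must either adopt that reading explicitly, or show that the correction term vanishes identically on $\Lambda$ (for instance by a representation-theoretic argument forcing $B(\lambda)\equiv0$, or by direct verification in the applications of Sections 4 and 5). The remaining steps --- the invariant-theoretic reduction, the intermediate value argument, and the stabilizer computation --- are sound and coincide with the paper's.
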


\begin{proof}
Notice that by hypothesis, for any fixed $\lambda\in \Lambda$ the function $f(\lambda,\cdot)\in C^\infty(N)$ is $L$-invariant.
Therefore, according to the Principle of Symmetric Criticality, a point $v\in N^L$ is a critical point of
$f(\lambda,\cdot)$  if and only if the restriction of $f(\lambda,\cdot)$ to $N^L$, denoted by $f^L(\lambda,\cdot)$, has a critical point at $v$.
This is equivalent to $D_Nf(\lambda,v)\rrestr{N^L}=0$.
Notice also that the pull-back property for Hessians implies
$$D^2_N f(\lambda,v)\rrestr{N^L}=D^2_{N^L}f^L(\lambda,v).$$
Consequently, if (i) and (ii) are satisfied, then the (unique) eigenvalue $\sigma(\lambda)$ of
$D^2_{N^L}f^L(\lambda,0)\rrestr{\Null}$ is the only eigenvalue of $D^2_{N^L}f^L(\lambda,0)$ that
crosses $0$ at $\lambda=0$, the others being bounded away from zero.
We now apply the Splitting Lemma from singularity theory,
see for example \cite{PoSt,Mon-SBC}.

Let $N^L=\Null\oplus S$ be the given splitting.  Then $D^2_Nf(0,0)\rrestr{S}$ is non-degenerate.  Writing $v=(y,s)$ with $y\in\Null,\;s\in S$, there is an $N_K(L)$-equivariant change of coordinates of the form $(\lambda,y,\bar s) \mapsto (\lambda, y, s(\lambda,y,\bar s))$, such that
$$f(\lambda,y,s(\lambda,y,\bar s)) = Q(\bar s) + g(\lambda,y).$$
Here $Q$ is the $N_K(L)$-invariant non-degenerate quadratic form $\frac12D^2f\rrestr{S}$, and $g$ is an $N_K(L)$-invariant function for which $D_{\Null}g(\lambda,0)=0$ and  $D^2_\Null g(0,0)= 0$.
It follows that $f(\lambda,\cdot)$ has a critical point at $v=(y,s)\in N^L$ if and only if $\bar s(\lambda,y,s)=0$ and $g(\lambda,\cdot)$ has a critical point at $y$.

Since we are only considering critical points with respect to $N^L$, we can replace $f(\lambda,y,s)$ by $f(\lambda,y,s)-f(\lambda,0,0)$ and thus, without loss of generality,  suppose that $f(\lambda,0,0)=0$. And hence $g(\lambda,0)=0$.

For the remainder, we first treat the case $\dim\Null=1$, and then extend to more general representations of cohomogeneity 1.  With $y\in\R$ and $g(\lambda,0)=0$ and $D_\Null g(\lambda,0)=0$ we can write $g(\lambda,y) = y^2g_1(\lambda,y)$. It follows from (ii) that $\sigma(\lambda)=2g_1(\lambda,0)$.  Therefore, by Taylor's theorem,
$$g(\lambda,y) = {\textstyle\frac12}y^2\sigma(\lambda)+ y^3b(\lambda,y)$$
for some smooth function $b$.  Differentiating with respect to $y$ we have
$$g'(\lambda,y) = y\sigma(\lambda) + y^2c(\lambda,y),$$
for some new function $c$.
Now use the fact that $\sigma$ crosses $0$: let $\lambda_1,\lambda_2\in\Lambda$ be such that $\sigma(\lambda_1)>0$ and $\sigma(\lambda_2)<0$.  There is then a neighbourhood $V$ of 0 in $\Null$ such that $\sigma(\lambda_1) + yc(\lambda_1,y)>0$ and $\sigma(\lambda_2) + yc(\lambda_2,y)<0$ for all $y\in V$. Now fix $y\in V$. By the intermediate value theorem there is a $\lambda\in\Lambda$ (by taking any path in $\Lambda$ joining $\lambda_1$ to $\lambda_2$) with $g'(\lambda,y)=0$ as required.

Now consider the case where $\dim\Null>1$, and let $z(y)$ be the generator of the ring of invariants for the $N_K(L)$-action on $\Null$. Then we can write $g=g(\lambda,z)$, and the argument above can be repeated using $z$ in place of $y$.

For the stabilizer, let $m=(\lambda,y,s)$ be a critical point.  Then $s= s(\lambda,y,0)$ (i.e., $\bar s=0$) so the result follows.
\end{proof}

\section{Stability}\label{sec stability}

In this paper we use equations
\eqref{genbundle1}, \eqref{genbundle2}, \eqref{genbundle3} together
with \eqref{re1} and \eqref{re2} to obtain results on the
nonlinear stability, persistence and bifurcations of Hamiltonian
relative equilibria.  To do so, we take advantage of the available freedom
in the isotropy, given by the indeterminacy in the Lie algebra
element $\eta\in \g_z$ in the bundle equations with isotropy, and clarify the appearance in the literature of ``orthogonal velocities'' and optimal stability conditions.  In
this section we will be concerned only about the stability problem of
relative equilibria.  We start by introducing some notions needed in the
following.  We will always assume that we are given a
$G$-Hamiltonian system $(\PP,\w,G,\J,h)$, a point $z\in\PP$ with
split momentum $\J(z)=\mu$, and that we have chosen $G_z$-invariant splittings
$\g_\mu=\g_z\oplus \m$ and $\ker T_z\J=\g_\mu\cdot z\oplus N$, and a
$G_\mu$-invariant splitting $\g=\g_\mu\oplus\q$.  The results in this
section extend the main result of \cite{MoRo11} to allow non-compact $G_\mu$, although we still require $\g$ to admit a $G_\mu$-invariant inner product.

\begin{defn}\label{ortvelocity}
Let $z\in\PP$ be a relative equilibrium and $\xi\in\g_\mu$ a velocity. Then the projection
$\xi^\perp=\Proj_\m(\xi)\in\m$ is called the orthogonal velocity of
$z$.
\end{defn}

Note that the orthogonal velocity of a relative equilibrium is unique once a splitting \eqref{gmusplitting} has been chosen, since all velocities of $z$ differ by elements of $\g_z$. On the other hand, in general there is no canonical orthogonal velocity for $z$, unless the splitting \eqref{gmusplitting} is unique (see Remark \ref{remarkstability} for a counterexample).

\begin{defn}\label{nonlinearstability}
Let $z\in\PP$ be a relative equilibrium with momentum $\J(z)=\mu$.
We say $z$ is nonlinearly stable (or stable modulo $G_\mu$) if for every $G_\mu$-invariant neighbourhood $U$ of $z$ there is an open neighbourhood $O$ of $z$ contained in $U$ such that the integral curve of $X_h$ through any point in $O$ is contained in $U$ for all time.
\end{defn}

This definition of stability for symmetric Hamiltonian systems  was
introduced in \cite{Pat92}  where it is shown to be a natural
one in this setting, since it has been noticed that the existence of
a conserved momentum map can allow a drift of the Hamiltonian
dynamics along the $G_\mu$-orbits which makes the obvious choice of
orbital stability too restrictive in the symmetric Hamiltonian
scenario. We will see below that stability modulo $G_\mu$ is related to the following notion of
formal stability.

\begin{defn}\label{def formalstability}
Let $z\in\PP$ be a relative equilibrium with momentum $\J(z)=\mu$. We
say that $z$ is formally stable if it admits a velocity
$\xi\in\g_\mu$ such that $\ed^2_z h_\xi\rrestr{N}$ is
definite.
\end{defn}

The  lemma below will be of great importance in the exchange of information between a $G$-Hamiltonian system $(\PP,\w,G,\J,h)$ and its local model $(Y,\w_Y,G,\J_Y,h\circ\varphi)$. In particular, it shows that the concept of formal stability is independent of the choice of symplectic normal space $N$.  First we fix some notation.

\begin{defn}\label{branchdef}
We say that an injective map
$\bar z:W\rightarrow\PP$, where $W$ is a neighbourhood of $0$ in a
vector space, is a \emph{parametrized branch of relative equilibria} if for
every $w\in W$, the point $\bar z(w)$ is a relative equilibrium, and all
the relative equilibria of the branch are inequivalent in the
sense that they all belong to different $G$-orbits. Let $\bar z(0)=z$
with $\J(z)=\mu$. The points  of the branch $\bar z(w)$ are said to be \emph{of
the same symplectic type} (or symplectic orbit type) if for every $w\in W$,
there is some $g\in G$ such that $G_{\J(\bar z(w))}=gG_\mu g^{-1}$ and
$G_{\bar z(w)}=gG_zg^{-1}$.
\end{defn}

Notice that by construction of the Marle-Guillemin-Sternberg model, if $\bar z(w)$ is a family of constant symplectic type then the symplectic normal spaces $N_{g^{-1}\bar z(w)g}$ and $N_z$ are isomorphic
as $G_z$-modules.  Since group orbits of relative equilibria consist of relative equilibria, it is customary to refer, by extension, to $G\cdot \bar z(w)$ as a parametrized branch of relative equilibria, although in this case we are not parametrizing points but $G$-orbits of relative equilibria.

For the following result, recall that the function $\bh$ is defined in \eqref{eq:h-bar}.

\begin{lemma}\label{techlemma}
\
\begin{itemize}
\item[(i)]
Let $z=[e,\rho,0]\in Y$, with $\rho\in\m^*$. Then
for every $(\rho,\eta')\in\m^*\times\g_z$,
$$D_N\bh_{\eta'}(\rho,\cdot)=0\quad \text{if and only if}\quad
\ed_{[e,\rho,0]}(h\circ\varphi)_{\xi}=0,$$ where
$$(h\circ\varphi)_{\xi}:=(h\circ\varphi)-\J_Y^\xi(\cdot)
=h_{\xi}\circ\varphi,$$ and
${\xi}=D_{\m^*}\bh(\rho,0)+\eta'$. Furthermore, if
$0\in N$ is a critical point of $\bh_{\eta'}(\rho',\cdot)$,
then
$$D^2_{N}\bh_{\eta'}(\rho,0)=\ed^2_{[e,\rho,0]}(h\circ\varphi)_{\xi}\rrestr{N},$$ where $D^2_{N}\bh_{\eta'}(\rho,0)$ is
the Hessian of the function $\bh_{\eta'}(\rho,\cdot)\in
C^\infty(N)$ at $0\in N$ and
$\ed^2_{[e,\rho,0]}(h\circ\varphi)_{\xi}\rrestr{N}$ is
the restriction to $N$ of the Hessian of
$(h\circ\varphi)_{\xi}$ at $[e,\rho,0]$.

\item[(ii)] Let $z\in\PP$ be a critical point of $h_\xi$ for some $\xi\in\g$. Suppose that $\ed^2_z h_\xi\rrestr{N}$
is non-degenerate, degenerate, positive definite or negative definite.
Then the corresponding property is satisfied
for any other choice of symplectic normal space $N'$.
\item[(iii)] The same conclusions hold if in (ii) we replace $N$ by $N^{K}$ and $N'$ by $(N')^K$ respectively,
for any compact subgroup $K\leq G_z$.
\end{itemize}
\end{lemma}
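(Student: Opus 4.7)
I will handle (i) by direct computation in the MGS model, then deduce (ii) and (iii) by a quotient-descent argument for the Hessian of $h_\xi$.

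For (i), $G$-invariance of $h$ combined with $G$-equivariance of $\varphi$ gives $(h\circ\varphi)([g,\rho,v])=\bh(\rho,v)$, so at $[e,\rho,0]$ the derivative in any direction of type $\gamma_{\lambda,\dot\rho,\dot v}$ reduces to a partial derivative of $\bh$. The $N$-component is the one that matters for both parts of (i): from $\bh$ it produces $D_N\bh(\rho,0)(\dot v)$, and from $\J_Y^\xi$ (via \eqref{JY}) it produces $\langle T_0\J_N(\dot v),\xi\rangle$, which vanishes because $\J_N$ is quadratic in $v$. Moreover, since $\J_N$ takes values in $\g_z^*$, only the $\g_z$-part $\eta'$ of $\xi$ contributes to the second differential. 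These observations yield
\[\ed_{[e,\rho,0]}(h\circ\varphi)_\xi\cdot\gamma_{0,0,\dot v}=D_N\bh(\rho,0)(\dot v)=D_N\bh_{\eta'}(\rho,0)(\dot v)\]
(the last equality holding because $\ed_0\J_N^{\eta'}=0$), together with
\[\ed^2_{[e,\rho,0]}(h\circ\varphi)_\xi\rrestr{N}=D^2_N\bh(\rho,0)-D^2_N\J_N^{\eta'}(0)=D^2_N\bh_{\eta'}(\rho,0).\]
The $\gamma_{0,\dot\rho,0}$-component contributes $D_{\m^*}\bh(\rho,0)-\Proj_\m\xi$, which is zero by the very choice $\xi=D_{\m^*}\bh(\rho,0)+\eta'$; the group-direction component is handled exactly as in the proof of Proposition \ref{prop cond RE}.

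For (ii), the main step is to show that at a critical point $z$ of $h_\xi$, the Hessian $\ed^2_z h_\xi$ annihilates $\ker T_z\J\times\g\cdot z$. For any $\zeta\in\g$, $G$-invariance of $h$ gives $\zeta_\PP(h)=0$, and the equivariance identity $\{\J^\xi,\J^\zeta\}=\J^{[\xi,\zeta]}$ expresses $\zeta_\PP(\J^\xi)$ as $\pm\J^{[\xi,\zeta]}$ as a function on $\PP$. Differentiating this function in any direction $v\in\ker T_z\J$ and evaluating at $z$ produces $\langle T_z\J(v),[\xi,\zeta]\rangle=0$, whence $\ed^2_z h_\xi(v,\zeta_\PP(z))=0$. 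Consequently $\ed^2_z h_\xi$ descends to a well-defined symmetric bilinear form on the quotient $\ker T_z\J/\g_\mu\cdot z$, and any $G_z$-invariant symplectic normal space $N$ or $N'$ is a linear lift of this quotient. The restrictions $\ed^2_z h_\xi\rrestr{N}$ and $\ed^2_z h_\xi\rrestr{N'}$ are therefore linearly isomorphic to the same intrinsic form, so they share non-degeneracy, degeneracy, and signature.

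Part (iii) is the same argument restricted to $K$-invariants: because $K\leq G_z$ is compact and the ambient splittings are $G_z$-equivariant, $(\ker T_z\J)^K=(\g_\mu\cdot z)^K\oplus N^K$ and analogously for $N'$, while the annihilation proved in (ii) restricts to the $K$-invariant subspaces. The Hessian then descends to a well-defined form on $(\ker T_z\J)^K/(\g_\mu\cdot z)^K$, with $N^K$ and $(N')^K$ as isomorphic lifts. The main obstacle of the whole lemma is the annihilation statement in (ii); once it is in place, the stability of the algebraic invariants of the Hessian under any choice of normal space (with or without $K$-equivariance) is essentially automatic.
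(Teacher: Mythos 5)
Your proof is correct and follows essentially the same route as the paper: part (i) by the same explicit computation of $\ed(h\circ\varphi)_\xi$ in the MGS model (using that $\J_N$ is quadratic so $\ed_0\J_N^{\eta'}=0$), and parts (ii)--(iii) by showing $\ed^2_zh_\xi$ annihilates $\ker T_z\J\times\g\cdot z$ — which you derive via the Poisson-bracket identity $\{\J^\xi,\J^\zeta\}=\J^{[\xi,\zeta]}$ where the paper differentiates $\ed h_{\Ad_g\xi}(g\cdot z)=0$ along $g=\exp(t\eta)$, an equivalent computation. The only point where you are more cavalier than the paper is the group-direction term in (i), which produces the condition $\ad^*_\xi(\mu+\rho)=0$ and is not really "handled in Proposition \ref{prop cond RE}"; the paper disposes of it by assuming (without loss of generality, for the families to which the lemma is applied) that $G_{\J_Y([e,\rho,0])}=G_\mu$, so that this condition is automatic for $\xi\in\g_\mu$.
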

\begin{proof}

 We will assume, without loss of
generality, that $G_{\J_Y([e,\rho,0]))}=G_\mu$ for each point of the family. In particular, from \eqref{JY}, this assumption implies that
$(G_\mu)_{\rho}=G_\mu$. Recall that from (v) in Proposition
\ref{charRE} the critical points of
$(h\circ\varphi)_{\xi}$ correspond precisely to relative
equilibria of the $G$-Hamiltonian system $(Y,\w_Y,G,\J_Y,h\circ\varphi)$. Now using \eqref{JY} and
$$(h\circ\varphi)([g,\rho,v])=\bh(\rho,v)$$ we have that

\begin{eqnarray*}
\langle\ed(h\circ\varphi)_{\xi}([g,\rho,v]),\gamma_{\lambda,\dot\rho,\dot
v}([g,\rho,v])\rangle & = & \frac{d}{dt}\restr{t=0}(\bh(\rho+t\dot\rho,v+t\dot v)\\
& &
-\frac{d}{dt}\restr{t=0}\langle\Ad^*_{(ge^{t\lambda})^{-1}}(\mu+\rho+t\dot\rho+\J_N(v+t\dot
v)),\xi\rangle\\
& = & D_{\m^*}\bh(\rho,v)\cdot\dot\rho+D_N\bh(\rho,v)\cdot\dot v\\
& & + \langle\Ad^*_{g^{-1}}(\ad^*_\lambda(\mu+\rho+\J_N(v))),\xi\rangle\\
& & -\langle\Ad_{g^{-1}}^*(\dot\rho+D_N\J_N(v)\cdot\dot
v),\xi\rangle
\end{eqnarray*}
It follows that if we make $g=e$ and $v=0$ in the
previous expression we obtain
$\ed(h\circ\varphi)_{\xi}([e,\rho,0])=0$ if and only if
\begin{eqnarray*}
D_{\m^*}\bh(\rho,0)-\Proj_{\m}(\xi) & = & 0\\
D_N\left(\bh-\langle
\J_N,\Proj_{\g_z}(\xi)\rangle\right)(\rho,0) & = & 0\\
\ad_{\xi}^*(\mu+\rho) & = & 0
\end{eqnarray*}
Therefore since $\Proj_{\g_z}(\xi)=\eta'$ the first
condition is automatically satisfied and the second is equivalent to
$$
D_N\bh_{\eta'}(\rho,0)  = 0.$$ Notice that since
$\xi\in\g_\mu$ and $(G_\mu)_{\rho}=G_\mu$ the third
equation is automatically satisfied. This proves the first part of
$(i)$.

For the second part of $(i)$, notice  now that if $0\in N$ is a
critical point of $\bh_{\eta'}(\rho,\cdot)$, then
$D_N\bh(\rho,0)=0$ and $D\J_N^{\eta'}(0)=0$ for any
$\rho\in\m^*$. This follows from \eqref{JN} since
$\J_N^{\eta'}$ is a quadratic polynomial on $N$. We can
naturally identify $N$ with the subspace of $T_{[e,\rho,0]}Y$ given by
$\{\gamma_{0,0,\dot v}([e,\rho,0]) \, : \, \dot v\in N\}$. Then, we
have
$$\begin{array}{l}\ed^2_{[e,\rho,0]}(h\circ\varphi)_{\xi} (\gamma_{0,0,\dot v_1}([e,\rho,0]),\gamma_{0,0,\dot v_2}([e,\rho,0]))\\
\qquad=  D^2_N\bh(\rho,0)(\dot v_1,\dot v_2)-D^2\J_N^{\eta'}(0)(\dot v_1,\dot v_2)\\
\qquad =  D^2_N\bh_{\eta'}(\rho,0)(\dot v_1,\dot v_2)\end{array}$$
with ${\xi}=D_{\m^*}\bh (\rho,0)+\eta'$.

\noindent For (ii), let $N$ and $N'$ be two symplectic normal spaces at $z$. That is, both are $G_z$-invariant complements to $\g_\mu\cdot z$ in $\ker T_z\J$. Every element in $N'$ can therefore be written as $v+\lambda\cdot z$ for some $v\in N$ and $\lambda\in\g_\mu$.   Since the Hamiltonian vector field is equivariant, the point $g\cdot z$ is a relative equilibrium with group velocity $\Ad_g\xi$, and hence for all $g\in G$, the corresponding differential vanishes: $\ed h_{\Ad_g\xi}(g\cdot z) = 0$.
Write $g=\exp(t\eta)$ for $\eta\in\g_\mu$, and differentiate with respect to $t$ at $t=0$ to obtain
$$\ed^2h_\xi(\eta\cdot z,-) -\ed\J_{[\eta,\xi]} = 0,$$
where the differentials are taken at $z$. It follows that for any $v\in \ker\ed\J(z)$ and any $\eta\in\g$, we have
\[\ed^2h_\xi(\eta\cdot z,\,v)=0.\]
It then follows that, given any $\lambda\in\g_\mu$ (so that $\lambda\cdot z\in\ker\ed\J(p)$)
\[\ed^2h_\xi(v+\lambda\cdot z,\,v+\lambda\cdot z)=\ed^2h_\xi(v,\,v),\]
as required.
Part (iii) is a straightforward consequence of the pull-back property of Hessians.
\end{proof}

We can now extend a standard stability result, originally stated in
\cite{LeSi98} and \cite{OrRa99} for the case of relative equilibria
with continuous isotropy, extending the work of \cite{Pat92} on the
nonlinear stability of regular relative equilibria. This also extends the theorem in \cite{MoRo11}, where it was assumed that $G_\mu$ is compact.

\begin{thm}\label{stability}
Let $(\PP,\w,G,\J,h)$ be a $G$-Hamiltonian system and $z\in \PP$ a
relative equilibrium with momentum $\J(z)=\mu$. Suppose that $\g$
admits a $G_\mu$-invariant inner product with respect to the adjoint
representation. Suppose in addition that $z$ admits a velocity
$\xi\in\g_\mu$ such that $\ed^2_z h_\xi\rrestr{N}$ is definite,
where $N$ is any $G_z$-invariant complement to $\g_\mu\cdot z$ in
$\ker T_z\J$. Then $z$ is nonlinearly stable, in the sense of
Definition \ref{nonlinearstability}.
\end{thm}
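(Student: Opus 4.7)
The plan is to carry out an energy--momentum Lyapunov argument in the MGS model, exploiting the freedom in the isotropy element $\eta\in\g_z$ afforded by the bundle equations with isotropy, and using the $G_\mu$-invariant inner product on $\g$ to handle the non-compactness of $G_\mu$.

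By $G$-equivariance of the Hamiltonian flow and of the stability property, it suffices to prove the statement at $[e,0,0]\in Y=\mgs$ in the local model. Decompose the hypothesised velocity as $\xi=\xi^\perp+\eta$ with $\xi^\perp=\D_{\m^*}\bh(0,0)\in\m$ (Proposition \ref{prop cond RE}) and $\eta\in\g_z$; the $G_z$-equivariance of $\D_{\m^*}\bh$ at the fixed point $0$ shows $\xi^\perp\in\m^{G_z}$. Lemma \ref{techlemma}(i) identifies the hypothesised definiteness of $\ed^2_z h_\xi\rrestr{N}$ with that of $\D^2_N\bh_\eta(0,0)$, and I treat the positive-definite case; the negative-definite case is symmetric.

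Next, introduce the Lyapunov candidate
$$
F := h - \J^\xi + \tfrac{c}{2}\|\J-\mu\|^2
$$
on a neighbourhood of $z$ in $\PP$, where $\|\cdot\|$ is the dual norm on $\g^*$ induced by the $G_\mu$-invariant inner product and $c>0$ is to be chosen. Conservation of $h$ and of each component of $\J$ along the flow makes $F$ itself conserved. Using \eqref{re1}--\eqref{re2} and the fact that $\J_N$ vanishes to second order at $v=0$, one checks that $\ed F(z)=0$ (so the $\|\J-\mu\|^2$ term contributes nothing at first order). Transporting $\ed^2_z F$ through $\varphi$ to the slice $\m^*\times N$, the Hessian decomposes into three blocks: the $NN$-block coincides with $\D^2_N\bh_\eta(0,0)$ and is positive definite by hypothesis; the $\m^*\m^*$-block equals $\D^2_{\m^*\m^*}\bh(0,0)+c\,\langle\cdot,\cdot\rangle$ and is made positive definite by taking $c$ large; a Schur-complement estimate then handles the bounded cross-block $\D^2_{\m^*N}\bh(0,0)$, rendering the full Hessian positive definite for $c\gg 0$.

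A further crucial observation is that the bundle equation \eqref{genbundle1} has $\D_{\m^*}\bh+\eta\in\m\oplus\g_z=\g_\mu$, so a trajectory $[g(t),\rho(t),v(t)]$ starting at $g(0)=e$ keeps $g(t)\in G_\mu$ for all $t$, which is precisely the group direction along which one is allowed to drift for stability modulo $G_\mu$. Combining this with the conservation of $F$ and its positive-definite Hessian, a standard Lyapunov-confinement argument forces the slice coordinates $(\rho(t),v(t))$ to remain in a prescribed small neighbourhood of $(0,0)$ in $\m^*\times N$, from which nonlinear stability modulo $G_\mu$ follows after pushing forward through $\varphi$. The main technical obstacle, and what distinguishes this result from its $G_\mu$-compact predecessor \cite{MoRo11}, is the passage from the pointwise Hessian bound at $z$ to a $G_\mu$-uniform confinement estimate valid along the entire (possibly unbounded) $G_\mu$-orbit; this is exactly what the $G_\mu$-invariance of the inner product delivers, by making $\|\J-\mu\|^2$ a globally $G_\mu$-invariant function on $\PP$ without any appeal to compactness of $G_\mu$.
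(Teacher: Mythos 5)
Your strategy is the classical energy--momentum method with a momentum penalty term, which is genuinely different from the paper's proof (the paper works directly with the bundle equations: it confines $\rho(t)$ by observing that $\rho+\J_N(v)$ evolves on a coadjoint orbit of $G_\mu$, which is bounded because the invariant inner product traps such orbits inside spheres, and it confines $v(t)$ by a Morse-lemma argument for $\bh_\eta(\rho,\cdot)$). Your route could in principle work, but as written it has a genuine gap at the final confinement step. The function $F=h-\J^\xi+\frac{c}{2}\|\J-\mu\|^2$ is conserved, but it is \emph{not} $G_\mu$-invariant: while $h$ and $\|\J-\mu\|^2$ are (the latter thanks to the invariant inner product, as you say), the term $\J^\xi$ satisfies $\J^\xi(g\cdot p)=\J^{\Ad_{g^{-1}}\xi}(p)$ for $g\in G_\mu$, and $\xi$ need not be $\Ad(G_\mu)$-fixed. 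Concretely, along a trajectory $[g(t),\rho(t),v(t)]$ with $g(t)\in G_\mu$ one has
$$F([g,\rho,v])=\bh(\rho,v)-\langle\mu,\xi\rangle-\langle\rho+\J_N(v),\Ad_{g^{-1}}\xi\rangle+\tfrac{c}{2}\|\rho+\J_N(v)\|^2,$$
which is \emph{not} a function of the slice coordinates $(\rho,v)$ alone. The $g$-dependent term is first order in $\rho$, so no choice of $c$ makes the ``positive-definite Hessian at $z$ plus conservation'' argument confine $(\rho(t),v(t))$: the standard Lyapunov argument for stability modulo $G_\mu$ requires the Lyapunov function to descend to the quotient by $G_\mu$, and yours does not.

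The gap is repairable, but it needs an extra idea that your write-up does not contain. Since $\|\J-\mu\|$ is itself conserved and equals $\|\rho(t)+\J_N(v(t))\|$ along the trajectory (using $g(t)\in G_\mu$ and the invariance of the norm), this quantity is a conserved constant $\delta$ determined by the initial condition, and with an adapted (orthogonal) splitting it bounds $\|\rho(t)\|$ and $\|\J_N(v(t))\|$ separately for all time. The offending term is then bounded by $\delta\,\|\Ad_{g^{-1}}\xi\|=\delta\,\|\xi\|$ --- a second, essential use of the $G_\mu$-invariance of the inner product, beyond the one you credit it with. This turns $\bh_\eta(\rho(t),v(t))$ into an approximately conserved quantity with error $O(\delta)$, and the definiteness of $D^2_N\bh_\eta(0,0)$ then confines $v(t)$ to an $O(\sqrt{\delta})$-neighbourhood; the $O(\sqrt{\epsilon})$ rather than $O(\epsilon)$ estimate is still enough for Definition \ref{nonlinearstability}. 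Note that once this is done, the penalty term $\frac{c}{2}\|\J-\mu\|^2$ and the Schur-complement argument are doing no real work: what carries the proof is the exact conservation of $\|\J-\mu\|$ together with definiteness of the $N$-block alone, which is essentially the paper's argument in different clothing. You should either supply this control of the drift term $\langle\J-\mu,\Ad_{g(t)^{-1}}\xi-\xi\rangle$ explicitly, or abandon $F$ in favour of the two separately conserved/confined quantities as the paper does.
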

\begin{proof}
Since $\varphi:Y=\mgs\rightarrow \PP$ is a local $G$-equivariant
symplectomorphism, it is clear that $[e,0,0]$ is a relative
equilibrium with momentum $\J_Y([e,0,0])=\mu$ for the local
$G$-Hamiltonian system $(Y,\w_Y,G,\J_Y,h\circ\varphi)$ that models
the original system in a neighbourhood of the group orbit $G\cdot z$.
It is easy to verify that $N$, identified with $\{\gamma_{0,0,\dot
v}([e,0,0]) \, : \, \dot v\in N\}\in T_{[e,0,0]}(\mgs)$ is a
symplectic normal space at $[e,0,0]$. Let $\xi\in\g_\mu$ be a
velocity for $[e,0,0]$ and let $\xi=\xi^\perp+\eta\in\m\oplus\g_z$ according to the splitting \eqref{gmusplitting}.
Then, it follows from (i) in Lemma \ref{techlemma} that
$D_N\bh_\eta(0,0)=0$ and $D_{\m*}\bh (0,0)=\xi^\perp$. Suppose now
that $\ed^2_{[e,0,0]}(h\circ\varphi)_\xi\rrestr{N}$ is definite.
Then, from (ii) also in Lemma \ref{techlemma} we have that
$D^2_N\bh_\eta (0,0)$ is definite. The existence of a
$G_\mu$-invariant inner product on $\g$ implies that $\mu$ is split,
so (\ref{genbundle1}--\ref{genbundle3}) apply.

By the smoothness of the dependence of $\bh$ on $\m^*$, it follows
from the Morse lemma that $0\in N$ is a Lyapunov stable point for
\eqref{genbundle3}. Also, the condition on the existence of the
inner product guarantee that the coadjoint orbits of $G_\mu$ are
contained in compact hypersurfaces, so they are bounded. This, using
\eqref{genbundle2} implies that $0\in \m^*$ is Lyapunov stable for
$\eqref{genbundle2}$.  Since $D_{\m^*}h(\rho,v)+\eta\in\g_\mu$
for all $(\rho,v)\in \m^*\times N$, it follows from
$\eqref{genbundle1}$ that $g(t)\in  g_0G_\mu$ for any initial
condition $g(0)=g_0$ and all $t$.

  Let
$O_G\subset G$ be an open neighbourhood of $e$. Any $G_\mu$-invariant
neighbourhood of $[e,0,0]$ in $\mgs$ is given by $U=\{G_\mu\cdot
[g,\rho,v]\,:\,g\in U_G,\,\rho\in U_{\m^*},\,v\in U_N\}\subset \mgs$
where $U_G\subset G,\,U_{\m^*}\subset \m^*$ and $U_N\subset N$ are
neighbourhoods of $e,0$ and $0$ respectively. It follows from the
above discussion, and from the Lyapunov stability of $(0,0)$ in
$\m^*\times N$ that we can find open neighbourhoods $O_{\m^*}\subset
U_{\m^*}$ and $O_N\subset U_N$ of the origins such that $\rho(t)\in
U_{\m^*}$  and $v(t)\in U_N$ for all $t$ if $\rho(0)\in O_{\m^*}$
and if $v(0)\in O_{N}$. Therefore, calling $O=\{[g,\rho,v]\,:\,g\in
U_G,\,\rho\in O_{\m^*},\,v\in O_N\}\subset \mgs$, we have that the
integral curves of $X_{h\circ\varphi}$ through points in $O$ always
lie inside $U$, proving the nonlinear  stability of $[e,0,0]$, hence of $z$.
It follows from Lemma \ref{techlemma}(ii) that this is independent of the
choice of symplectic normal space $N$.
\end{proof}

\begin{rem} \label{remarkstability}
Results analogous to Theorem \ref{stability} have been obtained first in \cite{Pat92} for the case when $G_z$ is discrete and in \cite{LeSi98,OrRa99} under similar hypotheses as here. In \cite{LeSi98} the result is proved using the symplectic slicing technique, and the proof of  \cite{OrRa99} is based on an extension of the methods of \cite{Pat92} to the case of continuous isotropy.  Theorem \ref{stability} is, however, more general, since it could guarantee nonlinear stability in some cases in which the application of \cite{LeSi98,OrRa99} is inconclusive. The main difference with our result is that in those two references, the bilinear form on $N$ used to test stability is $\ed_zh_{\xi^\perp}$, where $\xi^\perp$ is the orthogonal velocity associated to some choice of $G_z$-invariant splitting \eqref{gmusplitting}. The strongest stability results follow then by testing over all possible orthogonal velocities corresponding to all possible such invariant splittings. In our approach, we fix from the beginning one splitting, which is necessary only for the construction of the local model of the $G$-Hamiltonian system, and we test over all possible velocities admissible for the relative equilibrium under study. This is the same as testing stability for $\xi^\perp+\eta$ for all possible $\eta\in\g_z$, where $\xi^\perp$ is the orthogonal velocity corresponding to the fixed invariant splitting of $\g_\mu$. We provide now an example where the methods of \cite{LeSi98,OrRa99} don't predict stability, but Theorem \ref{stability} does.  This is a different example to the one in \cite{MoRo11}, and here the relative equilibrium is not an equilibrium.

Let $z$ be a relative equilibrium with $G_\mu=\mathrm{SO}(3)$ and $G_z=S^1$. If we identify $\g_\mu$ with $\R^3$ and $\g_z$ with $\operatorname{span}\langle\bf{e_3}\rangle$, the only $G_z$-invariant complement to $\g_z$ is given by $\m=\operatorname{span}\langle\bf{e_1},\bf{e_2}\rangle$, which can be identified with $\m^*$ using the standard inner product on $\R^3$.

Suppose that the symplectic normal space $N$ at $z$ is isomorphic to $\R^4$, where in a Darboux basis the symplectic matrix $\Omega$ takes the form
$$\Omega=\left(\begin{array}{llll}
0 & 1 & 0 & 0 \\
-1 & 0 & 0 & 0\\
0 & 0 & 0 & 1\\
0 & 0 & -1 & 0\end{array}\right).$$
Suppose that in the linear coordinates $\{x_1,y_1,x_2,y_2\}$ associated to this basis we have
$$\bh(\rho,v)=\frac 12 \left(x_1^2+y_1^2-x_2^2-y_2^2\right)+f(\rho),$$
where $v=(x_1,y_1,x_2,y_2)$, $\rho\in\m^*$ and $f$ is an arbitrary $S^1$-invariant function on $\m^*$.
Suppose also that in this basis for $N$, the symplectic action of $G_z=S^1$ is given by
$$S^1\ni\theta\mapsto\left(\begin{array}{ll} R_\theta & 0 \\ 0 & R_\theta\end{array}\right),$$
with $R_\theta$ the standard rotation through $\theta$ on $\R^2$. The associated momentum map has the expression
$$\J_N(v)=\frac{1}{2} \left(x_1^2+y_1^2+x_2^2+y_2^2\right).$$
It follows from Lemma \ref{techlemma} that testing definiteness of $\ed_zh_{\xi^\perp}$, with $\xi^\perp$ being the orthogonal velocity relative to the invariant splitting of $\R^3$ into $\g_z$ and $\m$ is equivalent to testing definiteness of
$$D_N^2\bh_0(0,0)=\operatorname{diag}(1,1,-1,-1),$$
and the test is inconclusive. Since there are no more invariant splittings available, the results of \cite{LeSi98,OrRa99} can't predict the nonlinear stability of this relative equilibrium. In our approach, using Theorem \ref{stability} it follows from Lemma \ref{techlemma} that the relative equilibrium will be nonlinearly stable if there is some $\eta\in\R\simeq\g_z$ for which $D_N^2\bh_\eta(0,0)$ is definite. We have the general expression
$$\bh_\eta(\rho,v)=\frac{1-\eta}{2}\left(x_1^2+y_1^2\right)-\frac{1+\eta}{2}\left(x_2^2+y_2^2\right)+f(\rho).$$
Then we obtain
$$D_N^2\bh_\eta(0,0)=\operatorname{diag}(1-\eta,1-\eta,-(1+\eta),-(1+\eta)),$$
which is definite for $\eta\in (-\infty,-1)\cup(1,+\infty)$, and therefore the relative equilibrium is nonlinearly stable.

Finally, we remark that the first  time that an approach similar to
ours was used in the context of testing nonlinear stability of
Hamiltonian relative equilibria is in \cite{Wu01}. In that
reference, the authors only consider free Hamiltonian actions, which
do not exhibit isotropy, however in that situation they investigate
more general momentum values without requiring the existence of a
$G_\mu$-invariant inner product on $\g$.
\end{rem}

\section{Persistence}\label{sec persistence}

The problem of persistence of relative equilibria consists in
providing conditions under which a given relative equilibrium $z$
belongs to a continuous family of relative equilibria, called the
persisting branch. Additional properties about this family, such as the
stability and stabilizers of its elements, or its geometric
features, are  usually also of interest.
\begin{defn}\label{persistence}
 A relative equilibrium $z$ is said to persist, if for every $G$-invariant neighbourhood $U$ containing $z$, the set $U\backslash G\cdot z$ contains a relative equilibrium.
\end{defn}
In several references, it is also required in the definition of
persistence that the persisting set of relative equilibria is the
$G$-saturation of a parametrized branch of relative equilibria of
the same symplectic type, as in Definition \ref{branchdef}. We will not require
this. In this section we will examine four different scenarios where
a given relative equilibrium can persist, together with additional
information about the persisting branch.

\subsection{Persistence in the case of non-degeneracies}\label{subsec persistence nondeg}
\ \\
The following theorem gives sufficient conditions for finding
persisting branches of relative equilibria starting with a relative
equilibrium satisfying a typical non-degeneracy hypothesis, as well
as an estimate on the orbit types of the persisting relative
equilibria. It extends Theorem 3.2 of \cite{RoSD97} to non-compact groups,
and uses a weaker hypothesis.

First recall that the orbit type of a point $z\in M$ is the conjugacy class in $G$ of its stabilizer $G_z$ and it is denoted by $(G_z)$. In particular, two distinct relative equilibria have the same orbit type if and only if they have conjugate stabilizers.

If $M$ is a smooth manifold on which the Lie group $G$ acts smoothly and properly, for each orbit type $(H)$, the set
$$\PP_{(H)}=\{z\in M\, : \, G_z \,\text{is conjugate to}\, H\},$$
called the manifold of orbit type $(H)$ in $\PP$, is a disjoint
union of connected embedded submanifolds of $\PP$.  Our main
persistence result in the case of non-degeneracies is the following.

Before stating the theorems, we discuss briefly an algebraic hypothesis we make below and in other theorems:

\begin{lemma}\label{lemma:cocentral}
Let $H$ be a compact subgroup of the Lie group $G$, with Lie algebras $\h<\g$ respectively.  Suppose one can write $\g=\h\oplus\m$ (as vector spaces), with $\m\subset\z(\g)$, the centre of $\g$.  Then
\begin{itemize}
\item[(i)] the decomposition is $\ad(\h)$-invariant;
\item[(ii)] The decomposition $\g=\h\oplus\m$ is an equality of Lie algebras, with $\m$ Abelian.
\item[(iii)] $[\g,\g]\subset\h$;
\item[(iv)] any $\ad(\h)$-invariant decomposition (in particular if it is $\Ad(H)$-invariant) $\g=\h\oplus\m'$ also satisfies $\m'\subset\z(\g)$;
\end{itemize}
Furthermore, if $G$ is compact then $\m\subset\z(\g)$ is equivalent to $[\g,\g]\subset\h$.
\end{lemma}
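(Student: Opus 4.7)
The plan is to dispatch (i)--(iii) as direct unpackings of the central hypothesis $\m\subset\z(\g)$, then handle (iv) by playing two decompositions against each other, and finally treat the ``furthermore'' by invoking an $\Ad(G)$-invariant inner product. I expect (iv) to be the main obstacle, since the only tool I have to relate the unspecified $\m'$ to the centre is the original decomposition itself.

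For (i), the hypothesis $\m\subset\z(\g)$ gives $[\h,\m]\subset[\g,\m]=0\subset\m$, so $\m$ is $\ad(\h)$-invariant; $\h$ is $\ad(\h)$-invariant as a Lie subalgebra. For (ii), centrality of $\m$ forces $[\m,\m]=0$ (so $\m$ is abelian) and $[\h,\m]=0$, so $\m$ is an ideal and $\g=\h\oplus\m$ is a Lie algebra direct sum. For (iii), expanding $[\g,\g]$ along $\g=\h\oplus\m$ kills every term that touches $\m$ by centrality, leaving $[\g,\g]=[\h,\h]\subset\h$.

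For (iv), given any $\ad(\h)$-invariant decomposition $\g=\h\oplus\m'$, I will take $v'\in\m'$ and split it along the original decomposition as $v'=\xi+v$ with $\xi\in\h$ and $v\in\m$. For $g=g_1+g_2\in\h\oplus\m$, centrality of $\m$ collapses the bracket to $[v',g]=[\xi,g_1]$. When $g_1\in\h$, this bracket lies in $\h$; but $\ad(\h)$-invariance of $\m'$ gives $[g_1,v']\in\m'$, so also $[\xi,g_1]\in\m'$. Hence $[\xi,g_1]\in\h\cap\m'=0$ for every $g_1\in\h$, and since $[\xi,g_2]=0$ automatically for $g_2\in\m$, we conclude $[v',g]=0$ for all $g\in\g$, so $v'\in\z(\g)$.

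For the furthermore, one direction is immediate from (iii). For the converse, I will use compactness of $G$ to pick an $\Ad(G)$-invariant inner product $\langle\cdot,\cdot\rangle$ on $\g$, and take $\m=\h^{\perp}$, which is automatically $\Ad(H)$-invariant. For $v\in\m$ and any $\xi,\eta\in\g$, infinitesimal invariance gives $\langle[\xi,v],\eta\rangle=-\langle v,[\xi,\eta]\rangle$; since $[\xi,\eta]\in[\g,\g]\subset\h$ and $v\in\h^{\perp}$, the right side vanishes for all $\eta$, so $[\xi,v]=0$ and $v\in\z(\g)$. The independence of the chosen $\Ad(H)$-invariant complement then follows from (iv) applied with this $\m$ as the reference decomposition.
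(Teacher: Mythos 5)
Your proof is correct, and for parts (i)--(iii) it coincides with the paper's. The differences are in (iv) and the final equivalence, where your arguments are more elementary but reach the same conclusions. For (iv), the paper uses the projection $\g\to\m$ with kernel $\h$ to exhibit $\m'$ and $\m$ as equivalent $\ad(\h)$-representations, concluding $[\h,\m']=0$ from $[\h,\m]=0$ and then $[\m,\m']=0$ from centrality of $\m$; you instead take a single $v'\in\m'$, split it as $\xi+v$ along the given decomposition, and trap $[\xi,g_1]$ in $\h\cap\m'=0$ using the $\ad(\h)$-invariance of $\m'$. This is a direct element-wise version of the same mechanism and avoids any talk of equivalence of representations. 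For the ``furthermore,'' the paper first notes $[\g,\g]\subset\h$ forces $\h\lhd\g$ and then invokes the existence of an $\ad(\g)$-invariant complement, whereas you make the underlying construction explicit: you take $\m=\h^\perp$ for an $\Ad(G)$-invariant inner product and use the skew-symmetry identity $\langle[\xi,v],\eta\rangle=-\langle v,[\xi,\eta]\rangle$ together with $[\g,\g]\subset\h$ to kill $[\xi,v]$ outright. Both routes are sound; yours is slightly more self-contained, while the paper's version of (iv) packages the argument in a form (equivalence of $\ad(\h)$-modules) that is reusable elsewhere. Your closing remark that (iv) then transfers centrality to any $\Ad(H)$-invariant complement is a sensible clarification, since for an arbitrary vector-space complement the converse implication would fail.
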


When this condition that $\m\subset\z(\g)$ is satisfied, we say $\h$ is a \emph{co-central} subalgebra of $\g$.  It holds for any subalgebra if $\g$ is Abelian, and a non-Abelian example is provided by $H=\mathsf{SU}(n)$ as a subgroup of $G=\mathsf{U}(n)$.

\begin{proof}
(i) For the decomposition to be $\ad(\h)$-invariant simply means $[\h,\m]\subset\m$, but since $\m\subset\z(\g)$, we have $[\h,\m]=0\subset\m$.

\noindent(ii) This is clear, since $[\h,\m]=[\m,\m]=0$.

\noindent(iii) This follows immediately from (ii).

\noindent(iv) Since $\g=\h\oplus\m=\h\oplus\m'$, the projection $\g\to\m$ with kernel $\h$ induces an isomorphism $f:\m'\to\m$.  Moreover, since $\m'$ is $\ad(\h)$-invariant, the projection commutes with $\ad(\h)$, so the two representations $\m$ and $\m'$ are equivalent.  Thus $[\h,\m']=0$. Moreover, since $\m\subset\z(\g)$ it follows that $[\m,\m']=0$. Therefore $[\g,\m']=0$, whence $\m'\subset \z(\g)$, as required.

For the final part, suppose $G$ is compact,
and $[\g,\g]\subset\h$.   It follows that $\h\lhd\g$ (an ideal) since $[\g,\h]\subset[\g,\g]\subset\h$, and hence (by compactness of $G$), there is a decomposition $\g=\h\oplus\m$ that is $\ad(\g)$-invariant.   In particular, this implies $[\g,\m]\subset\m$. But since $[\g,\m]\subset\h$ by hypothesis, we conclude $[\g,\m]=0$, whence $\m\subset\z(\g)$.
\end{proof}

Note that since the adjoint action is by Lie algebra isomorphisms, fixed point sets in $\g$ are necessarily subalgebras (see Lemma \ref{lemma:algebraic}).  In particular if $K\leq G_z$ then $(\g_\mu)^K$ is a subalgebra of $\g_\mu$, and moreover (since $K$ is then compact), $(\g_\mu^*)^K$ is naturally the dual of $\g_\mu^K$. The lemma above will be applied with $\g_z^K$ in place of $\h$ and $\g_\mu^K$ in place of $\g$.

\begin{thm}\label{thm persistence nondeg} Let $z\in\PP$ be a relative equilibrium with momentum  $\J(z)=\mu$, and let $\xi\in\g_\mu$ be a velocity for $z$.
Write $\xi=\xi^\perp + \eta\in\m\oplus\g_z$ according to the splitting \eqref{gmusplitting}.
Suppose there exists a subgroup $K\leq (G_z)_\eta$ such that
\begin{itemize}
\item[(i)] $\g_z^K$ is a co-central subalgebra of $\g_\mu^K$, and
\item[(ii)] $\ed^2_zh_{\xi}\rrestr{N^K}$ is non-degenerate.
\end{itemize}
Then, there is a smooth $N_{(G_z)_\eta}(K)$-equivariant map
\begin{eqnarray*}\bar z:{\m^*}^K\times {\g_z}^K & \longrightarrow & \PP^K\\
(\rho,\eta') & \longmapsto & \bar z(\rho,\eta')
\end{eqnarray*}
defined in a neighbourhood of $(0,0)$ such that $\bar z(0,0)=z$, and  for
each $\eta'$ the map $\rho\mapsto \bar z(\rho,\eta')$ is an immersion, such that for each
$(\rho,\eta')$ in the domain, the point $\bar z(\rho,\eta')$ is a relative
equilibrium with velocity of the form $\xi+\eta'\in\g_\mu$ and stabilizer
$G_{\bar z(\rho,\eta')}$ satisfying
$$K\leq G_{\bar z(\rho,\eta')}\leq (G_z)_\rho.$$
Moreover the branch $\bar z(\rho,\eta')$ consists of every relative
equilibrium near $z$ with stabilizer containing $K$  and velocity
$\xi'$ satisfying $\xi'-\xi\in\g_z^K$.
\end{thm}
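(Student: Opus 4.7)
The plan is to work in the Marle--Guillemin--Sternberg model, identify $z$ with $[e,0,0]\in Y$, and translate the search for nearby relative equilibria into the system \eqref{req1}--\eqref{req2} of Proposition \ref{prop cond RE}. Equation \eqref{req2} will be solved by applying Lemma \ref{lemmaND} to a suitable function, while equation \eqref{req1} will be shown to hold automatically as a consequence of the co-central hypothesis (i).

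Set
$$f(\eta',\rho,v):=\bh(\rho,v)-\J_N^{\eta+\eta'}(v)=\bh_{\eta+\eta'}(\rho,v).$$
A short calculation using the $G_z$-invariance of $\bh$, the $G_z$-equivariance of $\J_N$, and the fact that $H:=(G_z)_\eta$ fixes $\eta$ under the adjoint action shows that $f$ is $H$-invariant on $\g_z\times\m^*\times N$. Since $z$ is a relative equilibrium with velocity $\xi=\xi^\perp+\eta$, Proposition \ref{prop cond RE} gives $D_Nf(0,0,0)=D_N\bh_\eta(0,0)=0$, and Lemma \ref{techlemma}(i) identifies
$$D_N^2 f(0,0,0)\rrestr{N^K}=D_N^2\bh_\eta(0,0)\rrestr{N^K}=\ed^2_z h_\xi\rrestr{N^K},$$
which is non-degenerate by hypothesis (ii). Since $K\leq H$, Lemma \ref{lemmaND} delivers a unique smooth $N_H(K)$-equivariant map $v:(\m^*)^K\times\g_z^K\to N^K$ with $v(0,0)=0$ satisfying $D_N\bh_{\eta+\eta'}(\rho,v(\rho,\eta'))=0$ in a neighbourhood of the origin.

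For \eqref{req1}, set $a:=D_{\m^*}\bh(\rho,v(\rho,\eta'))+\eta+\eta'$ and $b:=\rho+\J_N(v(\rho,\eta'))$. Differentiating the $G_z$-invariance of $\bh$ in the $\m^*$-direction shows $D_{\m^*}\bh(\rho,v(\rho,\eta'))\in\m^K$, so $a\in\g_\mu^K=\m^K\oplus\g_z^K$, and similarly $b\in(\g_\mu^*)^K$. Under the canonical identification $(\g_\mu^*)^K\cong(\g_\mu^K)^*$, the projection $\Proj_{\m^*}(\ad_a^*b)\in(\m^*)^K$ is determined by pairings with $x\in\m^K$, and
$$\langle\Proj_{\m^*}(\ad_a^*b),\,x\rangle=-\langle b,\,[a,x]\rangle.$$
Hypothesis (i) combined with Lemma \ref{lemma:cocentral}(iv) applied to the decomposition $\g_\mu^K=\g_z^K\oplus\m^K$ gives $\m^K\subset\z(\g_\mu^K)$, so $[a,x]=0$ and \eqref{req1} holds identically. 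Hence $\bar z(\rho,\eta'):=\varphi([e,\rho,v(\rho,\eta')])$ defines a family of relative equilibria with velocity $D_{\m^*}\bh(\rho,v(\rho,\eta'))+\eta+\eta'$, whose $\g_z$-component is $\eta+\eta'$.

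The smoothness and $N_H(K)$-equivariance of $\bar z$ follow from the corresponding properties of $v$ and the equivariance of $\varphi$; the immersion property for fixed $\eta'$ is immediate since the $\rho$-component of $[e,\rho,v(\rho,\eta')]$ differentiates to the identity at $\rho=0$. The stabilizer statement translates the final sentence of Lemma \ref{lemmaND} via $G_{[e,\rho,v]}=(G_z)_\rho\cap(G_z)_v$. Finally, the uniqueness clause---that every nearby relative equilibrium with stabilizer containing $K$ and velocity differing from $\xi$ by an element of $\g_z^K$ lies on the branch---is the uniqueness part of Lemma \ref{lemmaND}, since by Proposition \ref{prop cond RE} any such equilibrium, represented in the MGS chart as $[e,\rho,v]$, must satisfy $D_N\bh_{\eta+\eta'}(\rho,v)=0$ with $(\rho,v,\eta')\in(\m^*)^K\times N^K\times\g_z^K$. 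I expect the third paragraph to be the main obstacle: the algebraic bookkeeping between $(\g_\mu^*)^K$, $\g_\mu^K$, and $\m^K$ must be handled carefully to see that the abstract co-central hypothesis produces exactly the vanishing required by the momentum equation.
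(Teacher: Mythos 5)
Your proposal is correct and follows essentially the same route as the paper's proof: the same choice of function $f(\eta',\rho,v)=\bh_{\eta+\eta'}(\rho,v)$, the same application of Lemma \ref{lemmaND} (with non-degeneracy transferred via Lemma \ref{techlemma}(i)), the same verification that \eqref{req1} holds automatically because $\gamma=D_{\m^*}\bh+\eta+\eta'\in\g_\mu^K$ and $\m^K\subset\z(\g_\mu^K)$ by the co-central hypothesis, and the same appeal to the uniqueness clause of Lemma \ref{lemmaND} for the final statement. The only cosmetic differences are a sign convention in the pairing $\langle\ad^*_a b,x\rangle$ (immaterial since $[a,x]=0$) and your use of direct differentiation of invariance where the paper invokes the Principle of Symmetric Criticality to see $D_{\m^*}\bh\in\m^K$.
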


This theorem if only non-trivial if ${\m^*}^K\neq0$. This is because, if ${\m^*}^K=0$, then the theorem is satisfied by the constant map $\bar z(\eta')=z$, since $z$ is by assumption a relative equilibrium with velocity $\xi+\eta'$ for every $\eta'\in\g_z$.

\begin{proof}
Write $\xi=\xi^\perp + \eta\in\m\oplus\g_z$ according to the splitting \eqref{gmusplitting}, thus  $\eta:=\Proj_{\g_z}(\xi)\in\g_z$. Using the local model
$(Y,\w_Y,G,\J_Y,h\circ\varphi)$ around the group orbit $G\cdot z$,
and putting $f(\eta',\rho,v)=\bh_{\eta+\eta'}(\rho,v)$ the hypotheses
imply that $D_Nf(0,0,0)=0$ and $D^2_Nf(0,0,0)\rrestr{N^K}$ is
non-degenerate (see \eqref{eq:h-bar} for definition of $\bh$). Therefore, from Lemma \ref{lemmaND} it follows that
there exists a local smooth $N_{(G_z)_\eta}(K)$-equivariant map
$v:{\m^*}^K\times \g_z^K\rightarrow N^K$ satisfying
$D_Nf(\eta',\rho,v(\rho,\eta'))=D_N\bh_{\eta+\eta'}(\rho,v(\rho,\eta'))=0$.
Therefore \eqref{req2} is satisfied for the isotropy Lie algebra element $\eta+\eta'$.
In the local model, we therefore put $\bar z(\rho,\eta') = (\rho,v(\rho,\eta'))$, which for each $\eta'$ is clearly an immersion.
We
will now show that \eqref{req1} is also satisfied for the same
choice of element.
Using the $G_z$-invariant (and therefore $K$-invariant) splittings $\g_\mu=\m\oplus\g_z$ and
$\g_\mu^*=\m^*\oplus\g_z^*$, and noting that
$\rho+\J_N(v(\rho,\eta'))\in\m^*\oplus\g_z^*=\g_\mu^*$ we can
identify $\rho+\J_N(v(\rho,\eta'))$ with an element $\lambda\in
\g_\mu^*$.
Moreover, $\lambda\in{\g_\mu^*}^K$, since $\rho\in {\m^*}^K$
and by the equivariance of $\J_N$ and the fact that
$v(\rho,\eta')\in N^K$, $\J_N(v(\rho,\eta'))\in{\g_z^*}^K$.
Therefore, \eqref{req1} is equivalent to
\begin{equation}\label{eq:projection to m^*}
\Proj_{\m^*}\left(\ad^*_\gamma\lambda\right)=0,
\end{equation}
with $\gamma=D_{\m^*}\bh(\rho,v(\rho,\eta'))+\eta+\eta'$.  We claim that $\gamma\in\g_\mu^K$. It then follows from hypothesis (i) that the equation above is satisfied. To see  this, note that \eqref{eq:projection to m^*} is equivalent to
$$\langle\ad^*_\gamma\lambda,\,\beta\rangle = 0,\quad \forall\beta\in\m^K.$$
This in turn is equivalent to $\langle\lambda,\,[\gamma,\beta]\rangle = 0$ (for all $\beta\in\m^K$).
The hypothesis implies $\m^K\subset\z(\g_\mu^K)$, whence $[\beta,\gamma]=0$ for all $\beta\in\m^K,\;\gamma\in\g_\mu^K$ showing \eqref{eq:projection to m^*} to be satisfied, independently of the values of $\gamma\in\g_\mu^K$ and $\lambda\in{\g_\mu^*}^K$.

To prove the claim that $\gamma\in\g_\mu^K$, recall that $K\leq (G_z)_\eta$, and therefore $\eta+\eta'\in \g_z^K$. It remains to show that $D_{\m^*}\bh(\rho,v(\rho,\eta'))\in \m^K$.
Now $\bh$ is $G_z$-invariant, and therefore for each $v\in N^K$, the function $\rho\mapsto\bh(\rho,v)$ is $K$-invariant. It follows (from the Principle of Symmetric Criticality) that for $\rho\in{\m^*}^K$ we have $D_{\m^*}\bh(\rho,v)\in\m^K$ as required.

Since we see that \eqref{req1} and \eqref{req2} are simultaneously
satisfied, if we define $\bar z(\rho,\eta')\in\PP$ by
$\bar z(\rho,\eta')=\varphi([e,\rho,v(\rho,\eta')])$ then it follows
by the equivariance of $\varphi$, Proposition \ref{prop cond RE} and
the fact that $\varphi$ is a local isomorphism of symmetric
Hamiltonian systems that for each pair
$(\rho,\eta')\in{\m^*}^K\times \g_z^K$, the point $\bar z(\rho,\eta')\in
\PP$ is a relative equilibrium  with velocity $\xi+\eta'$.

The stabilizer of $\bar z(\rho,\eta')$ is $G_{\bar z(\rho,\eta')}$. Since
$\varphi$ is $G$-equivariant this is equal to
$G_{[e,\rho,v(\rho,\eta')]}=(G_z)_m$ where $(G_z)_m$ is the
stabilizer of $m=(\rho,v(\rho,\eta'))\in \m^*\times N$ with
respect to the diagonal $G_z$-action. According to Lemma
\ref{lemmaND},  $K\leq (G_z)_m$ (so $z\in \PP^K$) and is contained in
$(G_z)_\rho$. It also follows from Lemma \ref{lemmaND} that the
pairs $(\rho,v(\rho,\eta'))\in{\m^*}^K\times\g_z^{K}$ give all
the solutions to equation \eqref{re2} with fixed $\xi$ and $\eta$,
from which the last statement follows.
\end{proof}

\subsection{Persistence of branches of constant orbit type}\label{subsec persistence maxim}
\ \\
As an application of Theorem \ref{thm persistence nondeg}, we now study in
more detail under which conditions we can guarantee that a relative equilibrium
belongs to a smooth branch of relative equilibria of the same orbit type (and possibly the same symplectic type), parametrized by momentum values. The following is an extension within our framework of the results of Patrick \cite{Pat95} in the free case and Lerman and Singer \cite{LeSi98} in the singular case, each reference requiring slightly different hypotheses.
It is also related to results of Patrick and Roberts \cite{PaRo00}, where $G$ is assumed to be compact and acting freely, and they give, \emph{inter alia}, necessary and sufficient conditions for the set of relative equilibria to form a symplectic submanifold.  It would be interesting to adapt their approach to non-free actions.

\begin{thm}\label{thm persistence same orbit type}
Let $z\in \PP$ be a relative equilibrium with momentum $\mu=\J(z)$ for which
$\ed^2_zh_\xi\rrestr{N^{G_z}}$ is non-degenerate for some admissible velocity $\xi\in \g_\mu.$
Suppose  moreover that the Lie algebra $\lil$ of $L:=N_{G_\mu}(G_z)/G_z$ is Abelian, where $N_{G_\mu}(G_z)$ is the normalizer of $G_z$ in $G_\mu$.
\begin{itemize}
\item[(i)]  There is a $G$-invariant neighbourhood $U$ of $z$ in $\PP_{(G_z)}$ such that the set $\Z$ of relative equilibria in $U$ forms a smooth submanifold of dimension $\dim G/G_z+\dim(\m^*)^{G_z}$.  For each $\mu'\in\J(U)$ there is a relative equilibrium $z'\in\Z$ with $\J(z')=\mu'$.

\item[(ii)] Suppose in addition that $G_z\lhd G_\mu$, or more generally  $\LieAlg(N_{G_\mu}(G_z))=\g_\mu$.  Then the submanifold $\Z$ is a symplectic submanifold of $\PP$.

\item[(iii)]  If $G_\mu$ is compact and Abelian
then the momentum values of the relative equilibria of $\Z$ have stabilizers
conjugate to $G_\mu$, and the family $\Z$ is of constant symplectic type.

\item[(iv)] Finally, still assuming the connected component of $G_\mu$ to be a torus, suppose $z$ is formally stable. Then all the relative equilibria of this branch close enough to $z$ are also formally stable and therefore nonlinearly stable.
\end{itemize}
\end{thm}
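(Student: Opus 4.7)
For (i), the plan is to apply Theorem \ref{thm persistence nondeg} with $K=G_z$.  I would first arrange, without loss of generality, that $\eta\in\z(\g_z)=\g_z^{G_z}$ so that $K\leq (G_z)_\eta$: any admissible velocity can be adjusted by an element $\zeta\in\g_z$, and since $\J_N^\zeta(v)=\tfrac12\Omega(\zeta\cdot v,v)$ vanishes identically on $N^{G_z}$ (because $\zeta\cdot v=0$ when $v\in N^{G_z}$ and $\zeta\in\g_z$), the non-degeneracy hypothesis on $\ed^2_zh_\xi\rrestr{N^{G_z}}$ is preserved under such shifts.  The co-central condition $\m^{G_z}\subset\z(\g_\mu^{G_z})$, with $\g_\mu^{G_z}=\z(\g_z)\oplus\m^{G_z}$, then holds: Lemma \ref{lemma:algebraic}(ii) identifies $\m^{G_z}$ with $\lil$ as a Lie algebra, which is Abelian by hypothesis, and $[\g_z,\m^{G_z}]=0$ follows since $\m^{G_z}$ is $G_z$-fixed.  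The resulting map $\bar z:(\m^*)^{G_z}\times\z(\g_z)\to\PP^{G_z}$ does not depend on the second factor, because the same vanishing of $\J_N^{\eta'}$ on $N^{G_z}$ removes $\eta'$ from the defining equation $D_N\bh_{\eta+\eta'}(\rho,v)=0$; thus the branch is parametrized by $(\m^*)^{G_z}$ alone, and its $G$-saturation has the stated dimension.  The map $\rho\mapsto\J(\bar z(\rho))=\mu+\rho+\J_N(v(\rho))$ is a local diffeomorphism (as $v(0)=0$ and $\J_N$ is quadratic, its derivative at $0$ restricts to the identity on $(\m^*)^{G_z}$), which together with $G$-equivariance ensures that every $\mu'\in\J(U)$ is realized by an element of $\Z$.

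For (ii) and (iii): under the hypothesis of (ii), Lemma \ref{lemma:algebraic}(ii) gives $\m^{G_z}=\m$, so the branch is parametrized by all of $\m^*$.  To verify $\Z$ is symplectic, I would compute the restriction of $\w_Y$ to tangent vectors $\gamma_{\lambda,\dot\rho,Dv(0)\dot\rho}$ at $[e,0,0]$ via \eqref{wY} (using $T_0\J_N=0$), and show its radical is trivial: a kernel element must satisfy $\Proj_\m\lambda_1=0$ by pairing with arbitrary $\dot\rho_2$, then $\lambda_1\in\g_\mu$ from the $\langle\mu,[\lambda_1,\lambda_2]\rangle$ term, then $\lambda_1\in\g_z$ so the class is zero in $T_{[e,0,0]}Y$, and finally $\dot\rho_1=0$.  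For (iii), the coadjoint action of compact Abelian $G_\mu$ on $\g_\mu^*$ is trivial, so every $\mu'=\mu+\rho+\J_N(v(\rho))\in\mu+\g_\mu^*$ is fixed by $G_\mu$, giving $G_\mu\leq G_{\mu'}$; upper semi-continuity of coadjoint stabilizers for the proper $G$-action forces the reverse inclusion up to conjugation for $\mu'$ near $\mu$, so $G_{\mu'}$ is conjugate to $G_\mu$.  Since the stabilizer bounds in the applied theorem together with $(G_z)_\rho=G_z$ give $G_{\bar z(\rho)}=G_z$, the branch has constant symplectic type.

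For (iv), I would invoke (iii) so that the symplectic normal spaces $N_{\bar z(\rho)}$ form a smooth family of $G_z$-representations isomorphic to $N_z=N$.  Proposition \ref{prop cond RE} supplies admissible velocities $\xi_\rho=D_{\m^*}\bh(\rho,v(\rho))+\eta\in\g_\mu$ that depend smoothly on $\rho$ with $\xi_0=\xi$.  Using Lemma \ref{techlemma}(ii), which ensures the definiteness of $\ed^2h_{\xi_\rho}$ restricted to a symplectic normal space is intrinsic, together with the smooth trivialization of symplectic normal spaces provided by constant symplectic type, the Hessian at $\bar z(\rho)$ can be identified with $D^2_N\bh_\eta(\rho,v(\rho))$, which varies continuously in $\rho$ and is definite at $\rho=0$ by hypothesis; hence it remains definite in a neighbourhood.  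Since $G_\mu$ is compact, averaging produces a $G_\mu$-invariant inner product on $\g$, so Theorem \ref{stability} applies and yields nonlinear stability of each $\bar z(\rho)$ sufficiently close to $z$.  The main technical subtlety lies in the Hessian identification across the branch; I would handle it by redoing the bundle equations around each $\bar z(\rho)$, using the smooth trivialization and $G$-equivariance of $\varphi$ to transfer computations back to the original MGS model at $z$.
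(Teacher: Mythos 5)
Your treatment of parts (ii)--(iv) tracks the paper's proof closely (restriction of $\w_Y$ to the branch as canonical-plus-magnetic-plus-KKS, the slice/semicontinuity argument for constant symplectic type, and continuity of the definite Hessian combined with Theorem \ref{stability}), and your overall strategy for (i) -- reduce to the implicit function theorem on $N^{G_z}$, observe that $\J_N^{\eta'}$ vanishes on $N^{G_z}$ so the $\g_z$-parameter drops out, and read off the momentum values from $\J(\bar z(\rho))=\mu+\rho$ -- is also the paper's. The difference is that the paper argues directly from Proposition \ref{prop cond RE}, whereas you route everything through Theorem \ref{thm persistence nondeg} with $K=G_z$, and that is where there is a genuine gap.

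You claim that the co-central hypothesis $\m^{G_z}\subset\z(\g_\mu^{G_z})$ follows from $\lil$ being Abelian together with $[\g_z,\m^{G_z}]=0$. It does not. Lemma \ref{lemma:algebraic}(ii) identifies $\lil$ with $\m^{G_z}$ equipped with the \emph{descended} bracket, i.e.\ $\Proj_\m\circ[\cdot,\cdot]$; so ``$\lil$ Abelian'' only gives $[\m^{G_z},\m^{G_z}]\subset\g_z^{G_z}$ inside $\g_\mu$, not $[\m^{G_z},\m^{G_z}]=0$, which is what co-centrality requires. For compact $G_\mu$ the two coincide ($\g_\mu^{G_z}$ is then reductive and a two-step nilpotent reductive algebra is Abelian), but the theorem is stated for possibly non-compact $G_\mu$, and there the implication fails: take $G_\mu$ the reduced Heisenberg group with $G_z$ its central circle, so $\g_\mu^{G_z}=\g_\mu$, $\lil\simeq\R^2$ is Abelian, yet $[\m,\m]=\g_z\neq0$. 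So hypothesis (i) of Theorem \ref{thm persistence nondeg} need not hold and you cannot invoke that theorem as a black box. The repair is exactly the observation you already use elsewhere: on the orbit-type stratum one has $\J_N\rrestr{N^{G_z}}=0$, so the element $\lambda=\rho+\J_N(v(\rho))$ appearing in \eqref{req1} equals $\rho\in(\m^*)^{G_z}$, which annihilates $\g_z$; hence the pairing $\langle\lambda,[\gamma,\beta]\rangle$ only sees $\Proj_\m[\gamma,\beta]$, i.e.\ the $\lil$-bracket, and the Abelian hypothesis suffices. This is precisely how the paper verifies \eqref{re1}, reducing it to $\ad^*_{D_{\m^*}\bh(\rho,v(\rho))}\rho=0$ with $D_{\m^*}\bh(\rho,v(\rho))\in\m^{G_z}\simeq\lil$ and $\rho\in\lil^*$. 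With that substitution your argument closes; as written, the step ``the co-central condition then holds'' is false in the stated generality.
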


\begin{proof}
(i)  In the local model $Y=G\times_{G_z}(\m^*\times N)$ of $\PP$ near $G\cdot z$, the point $z$ corresponds to $[e,0,0]$. According to the property
$$G_{[g,\rho,v]}=g((G_z)_\rho\cap (G_z)_v)g^{-1},$$
the points of orbit type $(G_z)$ are those of the form
$$[g,\rho,v],\quad\forall g\in G,\,\rho\in {\m^*}^{G_z},\,v\in N^{G_z}.$$
We wish to find solutions of (\ref{re1}, \ref{re2}) in $Y_{(G_z)}$.  As usual, the velocity $\xi$ can be decomposed as $\xi=\xi^\perp+\eta\in\m\oplus\g_z$, see  \eqref{gmusplitting}. In the hypothesis of the theorem, we have that
$D_N\bh_\eta(0,0)=0$ and $D^2_N\bh_\eta(0,0)\rrestr{N^{G_z}}$ is non-degenerate. Using that $\J_Y$ is a homogeneous quadratic polynomial on $N$ and that $\eta\cdot v=0$ for every $\eta\in \g_z$ and $v\in N^{G_z}$, we have that $D_N\bh(0,0)=D_N\bh_\eta(0,0)=0$ and that $D^2_N\bh(0,0)\rrestr{N^{G_z}}=D^2_N\bh_\eta(0,0)\rrestr{N^{G_z}}$ is non-degenerate.

By the Principle of Symmetric Criticality, and the pull-back
property of Hessians, this means that the function
$\bh^{G_z}(\rho,\cdot):=\bh(\rho,\cdot)\rrestr{N^{G_z}}$, with
$\rho\in{\m^*}^{G_z}$, has a non-degenerate critical point at $0\in
N^{G_z}$ for $\rho=0$. Therefore, from the implicit function theorem
there is a smooth function $v:{\m^*}^{G_z}\rightarrow N^{G_z}$ such
that for any $g\in G$ and $\rho\in {\m^*}^{G_z}$, the points in the
branch $[g,\rho,v(\rho)]$ are all the solutions of \eqref{re2}, with
$\eta=0$, with orbit type $(G_z)$ for $\rho$ sufficiently close to
$0$.

At points of this  branch, condition \eqref{re1} is equivalent to
$\ad^*_{D_{\m^*}\bh(\rho,v(\rho))}\rho=0$, again because $\J_N(v)=0$ for $v\in N^{G_z}$.

To see that this equality is satisfied for all $\rho$ in a neighbourhood of 0, notice that since $\rho\in{\m^*}^{G_z}$ and
$v(\rho)\in N^{G_z}$ then by the Principle of Symmetric Criticality,
$D_{\m^*}\bh(\rho,v(\rho))\in\m^{G_z}$ (as in the proof of Theorem \ref{thm persistence nondeg}). Since $\m^{G_z}$, together with the  bracket induced from $\g$ is
isomorphic to the Lie algebra $\mathfrak{l}$, and
${\m^*}^{G_z}$ is isomorphic to $\left(\m^{G_z}\right)^*=\mathfrak{l}^*$, the hypothesis on the Abelian nature of $\lil$ implies \eqref{re1}.

By \eqref{JY}, and since $\J_N(v)=0$ for $v\in N^{G_z}$, the  values
of $\J\rrestr{\PP_{(G_z)}}$ near $\mu$ form a neighbourhood of $\mu$ in the set
$$\{\Ad^*_{g^{-1}}(\mu+\rho)\,:\, g\in G,\,\rho\in {\m^*}^{G_z}\}.$$
Let $\mu'$ be one of such values, close to the coadjoint orbit through $\mu$. Then, there are $g\in G$ and $\rho\in {\m^*}^{G_z}$ close to 0, such that $\mu'=\Ad^*_{g^{-1}}(\mu+\rho)$. It follows that the point $[g,\rho,v(\rho)]$ is a relative equilibrium of orbit type $(G_z)$ close to the orbit $G\cdot z$.

Furthermore, if $\rho\in(\g_\mu^*)^{G_\mu}\cap\m^*\subset(\m^*)^{G_z}$, then $\mu'$ has orbit type $(G_\mu)$ and the resulting relative equilibrium will have the same symplectic type as $z$.

(ii) The symplectic form on $Y$ is given by $\omega_Y$ in \eqref{wY}. Pulling this back to the submanifold $\Z$ of relative equilibria in (i) gives a closed form.  To show it is non-degenerate in a neighbourhood of $[e,0,0]$ it suffices to show it is non-degenerate at that point. Let $\gamma_{1,2}$ be arbitrary tangent vectors at this point:
$$
\gamma_{1,2}=T\pi (g\cdot\lambda_{1,2},\dot\rho_{1,2},Dv(\rho)\cdot \dot\rho_{1,2})\in T_{[g,\rho,v(\rho)]}Y.
$$
Note that $\dot\rho_{1,2}\in(\m^*)^{G_z}$, but by Lemma \ref{lemma:algebraic}(iii) the hypothesis $G_z\lhd G_\mu$ implies $\m^{G_z}=\m$, and so
$$T_z\Z\simeq\g\cdot z \times(\m^*)^{G_z} = \g\cdot z \times\m^*.$$

It follows from \eqref{wY} that at $[e,0,0]$,
\begin{eqnarray*}
\w_Y(\gamma_1,\gamma_{2}) &=&
\langle\dot\rho_2,\lambda_1\rangle-\langle\dot\rho_1,\lambda_2\rangle+\langle\mu,[\lambda_1,\lambda_2]\rangle \\
&&\quad {} +\Omega_N(Dv(\rho)\cdot\dot\rho_1,Dv(\rho)\cdot\dot\rho_2),
\end{eqnarray*}
since $\J_N\rrestr{N^{G_z}}=T_{v(\rho)}\J_N\rrestr{N^{G_z}}=0$. We can decompose an element $\lambda\in\g$ as
$$\lambda=\lambda^{\g_z}+\lambda^\m+\lambda^\q,$$
with respect to the fixed splitting $\g=\g_z\oplus \m\oplus \q$. Then
\begin{eqnarray*}
\w_Y(\gamma_1,\gamma_{2}) &=&
\langle\dot\rho_2,\lambda_1^\m\rangle-\langle\dot\rho_1,\lambda_2^\m\rangle+\langle\mu,[\lambda_1^q,\lambda_2^q]\rangle \\
&&\quad {} +\Omega_N(Dv(\rho)\cdot\dot\rho_1,Dv(\rho)\cdot\dot\rho_2).
\end{eqnarray*}
The first two terms are the standard symplectic form on $\m\times\m^*$, the last term can be seen as a `magnetic form' on the same space, which does not alter the non-degeneracy, while the third term is the standard KKS symplectic form on $\g_\mu^\circ=\q^*$. It follows that $\omega_Y$ is indeed non-degenerate on $T_z\Z\simeq \g\cdot z \times\m^* = (\m\times\q)\times\m^*$. (See also Lemma 4.2 of \cite{LeSi98}.)

(iii) If $G_\mu$ is compact then, using Palais' tube theorem, one can choose a $G_\mu$-invariant slice to the coadjoint orbit through $\mu$, which can be identified with $\g_\mu^*$.  Then the stabilizer of $\J_Y([e,\rho,v(\rho)])=\mu+\rho$ is  $(G_\mu)_\rho$, and since $G_\mu$ is Abelian, $(G_\mu)_\rho=G_\mu$.  Thus all  points of $\Z$ have the same symplectic type.

(iv) Finally, suppose $z$ is formally stable; that is, $\ed^2_z h_\xi\rrestr{N}$ is
definite for some admissible velocity $\xi\in\g_\mu$.  Since by (iii) the points of $\Z$ are of the same symplectic type, they have isomorphic symplectic normal spaces $N'\simeq N$.  By continuity, the appropriate nearby hessian $\ed^2_{z'} h_{\xi'}\rrestr{N'}$ is also positive definite, and hence by Theorem \ref{stability} the relative equilibrium is nonlinearly stable.
\end{proof}

\subsection{Persistence in the case of degeneracies}\label{subsec persistence deg}
\ \\
We now prove a persistence result specific for relative equilibria
with continuous isotropy, which exploits the indeterminacy of the
velocity in order to predict branches of persisting relative equilibria.

\begin{thm}\label{thm persistence deg}
Let $z$ be a relative equilibrium with momentum $\J(z)=\mu$ and suppose that $z$ admits the velocity $\xi\in\g_\mu$ with $\xi=\xi^\perp + \eta\in\m\oplus\g_z$. Let $K= (G_z)_\eta$ and suppose that there
exists a subgroup $L\leq K$ for which $\Null:=\ker \ed^2_z h_{\xi}\rrestr{N^L}$ is non-zero.  Suppose moreover the following conditions are satisfied
\begin{itemize}
\item[(i)]  $\g_z^L$ is co-central in $\g_\mu^L$,
\item[(ii)] the action of $N_{K}(L)$ on $\Null$ is of cohomogeneity one, and
\item[(iii)] the eigenvalue  $\sigma(\eta')$ of $\ed^2_z h_{\xi+\eta'}\rrestr{\Null}$ for $\eta'\in\g_z^K$, crosses 0 at $\eta'=0$.
\end{itemize}
Then, for each $v\in\Null \backslash \{0\}$ close to $0$ there are
elements $\eta'_v\in \g_z^K$ close to $0$ and $z_v\in\PP$ close to $z$
which are relative equilibria with velocity
$\xi+\eta'_v$ and stabilizer $K_v$, which contains $L$.
\end{thm}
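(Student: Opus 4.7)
The plan is to transfer the problem to the Marle--Guillemin--Sternberg local model $Y = \mgs$ of $\PP$ about $G\cdot z$, where by Proposition \ref{prop cond RE} finding relative equilibria amounts to finding triples $(\rho, v, \eta')$ satisfying \eqref{req1} and \eqref{req2}. I will fix $\rho = 0$ and search for solutions with $v \in N^L$ and $\eta' \in \g_z^K$; this parametrization is the natural one for applying Lemma \ref{lemmaD}.

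Concretely, I would apply Lemma \ref{lemmaD} with the compact group $K$ acting trivially on $W := \g_z^K$ and in the natural way on $N$, and with the $K$-invariant function $f(\eta', v) := \bh_{\eta + \eta'}(0, v)$; here $\Lambda$ is a neighborhood of $0$ in $W^L = \g_z^K$. The hypothesis $D_N f(\eta', 0) = 0$ for all such $\eta'$ is immediate: $\J_N$ is quadratic in $v$ and vanishes to second order at $0$, so $D_v\J_N^{\eta + \eta'}(0) = 0$, while $D_N\bh(0,0) = 0$ because $z$ is a relative equilibrium with velocity $\xi$. Lemma \ref{techlemma}(i) then identifies $D^2_N f(0, 0)$ with $\ed^2_z h_\xi\rrestr{N}$, so that $\ker D^2_N f(0, 0)\cap N^L$ coincides with $\Null$ in the theorem, and likewise identifies the single eigenvalue of $D^2_N f(\eta', 0)\rrestr{\Null}$ with the $\sigma(\eta')$ in hypothesis (iii). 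Conditions (ii) and (iii) of the theorem therefore furnish the corresponding hypotheses of the lemma, producing, for each small $v \in \Null$, elements $\eta'_v \in \g_z^K$ and $s_v \in S$ (an $N_K(L)$-invariant complement to $\Null$ in $N^L$) satisfying $D_N\bh_{\eta + \eta'_v}(0, v + s_v) = 0$. This is equation \eqref{req2} at the triple $(0,\, v + s_v,\, \eta + \eta'_v)$.

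The next step is to verify equation \eqref{req1}, which at $\rho = 0$ reduces to $\Proj_{\m^*}\bigl(\ad^*_\gamma \J_N(v+s_v)\bigr) = 0$ with $\gamma := D_{\m^*}\bh(0, v+s_v) + \eta + \eta'_v$. Following the argument in the proof of Theorem \ref{thm persistence nondeg}, I would first show $\gamma \in \g_\mu^L$: the elements $\eta, \eta'_v$ lie in $\g_z^K \subset \g_z^L$, while $D_{\m^*}\bh(0, v+s_v) \in \m^L$ by the Principle of Symmetric Criticality applied to the $L$-invariant function $\rho \mapsto \bh(\rho, v+s_v)$ (invariance follows from $v + s_v \in N^L$ together with the $G_z$-invariance of $\bh$). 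Since $\J_N(v+s_v) \in (\g_z^*)^L$ by equivariance and $\ad_\gamma$ commutes with the $L$-action, the pairing $\langle \J_N(v+s_v), [\gamma, \beta]\rangle$ for $\beta \in \m$ reduces to $\langle \J_N(v+s_v), [\gamma, \Pi_L\beta]\rangle$, where $\Pi_L$ is the $L$-equivariant projection onto $\g^L$. The co-central hypothesis (i), via Lemma \ref{lemma:cocentral}, then yields $\m^L \subset \z(\g_\mu^L)$, whence $[\gamma, \Pi_L\beta] = 0$ and \eqref{req1} follows.

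Setting $z_v := \varphi([e, 0, v + s_v])$ then produces a relative equilibrium in $\PP$ close to $z$, with velocity $D_{\m^*}\bh(0, v + s_v) + \eta + \eta'_v$ (a small $\m$-perturbation of $\xi + \eta'_v$), and with stabilizer $(G_z)_{v + s_v} \supseteq L$. The main obstacle I expect lies in the verification of \eqref{req1}: one must isolate the co-central hypothesis through the Principle of Symmetric Criticality and the $L$-invariance, and apply the $L$-projection argument carefully, since the unperturbed bundle equation does not vanish automatically in the $\m^*$-direction. Once this step is in place, the remainder of the proof reduces to a direct application of Lemma \ref{lemmaD} together with the identifications supplied by Lemma \ref{techlemma}.
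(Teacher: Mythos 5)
Your proposal is correct and follows essentially the same route as the paper: transfer to the MGS model, apply Lemma \ref{lemmaD} with $W=\g_z^K$ and $f(\eta',v)=\bh_{\eta+\eta'}(0,v)$, and verify \eqref{req1} via the Principle of Symmetric Criticality together with the co-centrality hypothesis, exactly as in the proof of Theorem \ref{thm persistence nondeg}. If anything you are slightly more careful than the paper in tracking the splitting-lemma component $s_v$ and in noting that the resulting velocity is only an $\m$-perturbation of $\xi+\eta'_v$.
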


In Section\,\ref{sec example2}, we give an elementary illustration of this theorem in the system of pairs of point vortices on the sphere.

\begin{proof}
The proof is a direct application of Lemma \ref{lemmaD}, where we choose
$W=\g_z^K$,
and $f(\eta',v)=\bh_{\eta+\eta'}(0,v)$ (with $\rho=0$).  Notice that if $z$ corresponds to $[e,0,0]$ in the local model, then Lemma \ref{lemmaD},
predicts that for each
$v\in\Null$ there is an $\eta'_v$ such that $v$
is a critical point of $\bh_{\eta+\eta'_v}(0,\cdot)$. Therefore, $(0,v)$
satisfies \eqref{req2} for $\eta+\eta'_v$. By an argument similar to
the one used in Theorem \ref{thm persistence nondeg}, the Principle of Symmetric Criticality  guarantees that
$D_{\m^*}\bh(0,v)+\eta+\eta'_v\in\g_\mu^L$, and then condition (i) implies
that \eqref{req1} is also satisfied. Therefore, in the local model the persisting relative
equilibria predicted by this theorem are group orbits of points of the form $[e,0,v]$ for each
$v\in\Null$.  These are the points that correspond to $z_v$ in $\PP$.

The stabiliser is given by $(G_z)_v\cap (G_z)_{\eta'}$.  But $\eta'\in\g_z^K$, so $(G_z)_{\eta'}=K$, and $v\in N^L$ whence $L\leq (G_z)_v$.
\end{proof}

\subsection{Persistence from formally stable relative equilibria}\label{subsec persistence formal}
\ \\
To end this section, we prove
a more general result on momentum parametrized branches persisting
from formally stable relative equilibria. It was originally
obtained in \cite{MoTo03} in a slightly different setup, but there it was required that
$G_\mu$ be compact (see the remark below for more details).  Here we
present it as a product of the bundle equations formalism.

\begin{thm}\label{thm formally stable}
Let $z$ be a formally stable relative equilibrium with $\mu=\J(z)$. Suppose that $\g_\mu$ has a $G_\mu$-invariant inner product (for example, $G_\mu$ is compact). Then there is a $G$-invariant neighbourhood $U$ of $z$ such that for every $\mu'\in \J(U)$ near $\mu$ there is a relative equilibrium $z'$ near $z$ with $\J(z')=\mu'$.
\end{thm}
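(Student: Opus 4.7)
The plan is to work in the MGS local model around $z$ and, as in Theorems~\ref{thm persistence nondeg} and \ref{thm persistence same orbit type}, apply Lemma~\ref{lemmaND} to the bundle equation~\eqref{req2}, while using the $G_\mu$-invariant inner product on $\g_\mu$ to deal with the coadjoint equation~\eqref{req1} and the momentum matching. Pick a velocity $\xi = \xi^\perp + \eta \in \m\oplus\g_z$ for which $\ed^2_z h_\xi\rrestr{N}$ is definite. By Lemma~\ref{techlemma}(i), $D_N^2\bh_\eta(0,0)$ is definite, hence non-degenerate on the whole $N=N^{\{e\}}$. I would then apply Lemma~\ref{lemmaND} with $H=G_z$, $K=\{e\}$ and $f(\eta',\rho,v)=\bh_{\eta+\eta'}(\rho,v)$ to obtain a smooth $G_z$-equivariant map $v:\m^*\times\g_z\to N$ near $(0,0)$, with $v(0,0)=0$ and $D_N\bh_{\eta+\eta'}(\rho,v(\rho,\eta'))=0$. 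This handles \eqref{req2} along the candidate family $[e,\rho,v(\rho,\eta')]$ with Lie algebra element $\eta+\eta'$.

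The key observation is that this family has dimension $\dim\m+\dim\g_z=\dim\g_\mu$, matching the dimension of a slice to the coadjoint orbit $G\cdot\mu$ in $\g^*$. The momentum along the family is $\mu+\rho+\J_N(v(\rho,\eta'))\in\mu+\g_\mu^*$. For each $\mu'\in\J(U)$ close to $\mu$, I would use the $G$-coadjoint action to move $\mu'$ into the slice $\mu+\g_\mu^*$, decompose $\mu'-\mu\in\m^*\oplus\g_z^*$, and then try to choose $(\rho,\eta')$ so that \eqref{req1} holds and the momentum matches $\mu'$.

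The hard part is the simultaneous handling of \eqref{req1} and the momentum matching; this is where the $G_\mu$-invariant inner product on $\g_\mu$, which is the new assumption relative to Theorems~\ref{thm persistence nondeg} and~\ref{thm persistence same orbit type}, enters. The inner product guarantees that the $G_\mu$-coadjoint orbits in $\g_\mu^*$ are bounded, and hence compact --- the same structural ingredient that drives the proof of Theorem~\ref{stability} --- and provides an $\Ad(G_\mu)$-equivariant identification $\g_\mu\simeq\g_\mu^*$ that converts \eqref{req1} into a Lie-bracket condition on $\g_\mu$. The combination of the compactness of these coadjoint orbits, the freedom in $\eta'\in\g_z$, and the definiteness of $D_N^2\bh_\eta$ propagated along the branch by continuity, should permit verifying~\eqref{req1} and matching the $\g_z^*$-component of $\mu'$ simultaneously. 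The required $z'=\varphi([e,\rho,v(\rho,\eta')])\in\PP$, translated by a suitable $g\in G$, then satisfies $\J(z')=\mu'$.
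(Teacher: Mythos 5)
Your setup (MGS model, bundle equations, reduction to $\mu'=\mu+\bar\rho+\bar\epsilon$) is the right starting point, and you correctly isolate where the difficulty lies. But the proposal has a genuine gap precisely at that point: the sentence ``should permit verifying \eqref{req1} and matching the $\g_z^*$-component of $\mu'$ simultaneously'' is not an argument, and the local, implicit-function-theorem route you set up cannot deliver it. Having produced the family $v(\rho,\eta')$ from Lemma~\ref{lemmaND}, the momentum matching in the $\g_z^*$-direction requires solving $\J_N(v(\rho,\eta'))=\bar\epsilon$ for $\eta'\in\g_z$. Since $\J_N$ is a homogeneous quadratic on $N$ and $v(0,0)=0$, the derivative of $\eta'\mapsto\J_N(v(\rho,\eta'))$ vanishes at the origin, so no local inversion is available; moreover the image of $\J_N$ is in general a proper cone in $\g_z^*$, not a neighbourhood of $0$, so dimension counting ($\dim\m+\dim\g_z=\dim\g_\mu$) proves nothing. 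Separately, \eqref{req1} is a nontrivial constraint here: in Theorems~\ref{thm persistence nondeg} and \ref{thm persistence deg} it is killed by the co-centrality hypothesis, which is absent in Theorem~\ref{thm formally stable}, and the identification $\g_\mu\simeq\g_\mu^*$ you invoke does not by itself make the bracket condition hold along your family.

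The paper's proof avoids both problems by abandoning the IFT/branch picture entirely in favour of a global variational argument. It recasts \eqref{e1}--\eqref{e2} as the condition that the restriction of $f_\OO(\alpha,v)=\bh_\gamma(\alpha\rrestr{\m},v)-\bh_\gamma(0,0)$ to the constraint set $\phi_\OO^{-1}(0)$, where $\phi_\OO(\alpha,v)=\J_N(v)-\alpha\rrestr{\g_z}$ and $\OO$ is the $G_\mu$-coadjoint orbit of $\bar\rho+\bar\epsilon$, has a critical point, with $\eta\in\g_z$ appearing as the Lagrange multiplier. Formal stability (via the Splitting Lemma) together with compactness of $\OO$ --- this is exactly where the $G_\mu$-invariant inner product enters, as you correctly guessed --- shows $f_\OO$ is proper and bounded below, hence attains a minimum on the constraint set, which is nonempty because $\mu'\in\J(U)$. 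This existence-by-minimization step is the idea missing from your proposal; without it (or some substitute for it), the proof does not close.
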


\begin{proof}
Let $U_0$ be a neighbourhood of $z$ for which the Marle-Guillemin-Sternberg normal form is valid. Recall that for a point $[g,\rho,v]\in Y:=G\times_{G_z}(\m^*\times N)$ we have
$$\J([g,\rho,v])=\Ad_{g^{-1}}^*(\mu+\rho+\J_N(v))$$
Therefore $\J(Y)=\{\Ad_{g^{-1}}^*(\mu+\rho+\epsilon)\,:\, g\in G,\,\rho\in\m^*,\,\epsilon\in\im \J_N\}$.  Let $Y_0\subset Y$ be the subset corresponding to $U_0$; then $\J(Y_0)\subset \J(Y)$.  If $G_\mu$ is compact then $\J(Y_0)$ is an open subset of $\J(Y)$, but it is not known in the more general setting here---see also Remark \ref{rem:G-open} below.

\paragraph{Step 1. Taking $\mu'=\mu+\bar\rho+\bar\epsilon$}\

Suppose $z'\in Y_0$ (or $U_0$) is a relative equilibrium with
$\J(z')=\Ad_{g^{-1}}^*(\mu+\bar\rho+\bar\epsilon)$. This is
equivalent to $g^{-1}\cdot z'$ being a relative equilibrium with
momentum
$\Ad_g^*(\Ad_{g^{-1}}^*(\mu+\bar\rho+\bar\epsilon))=\mu+\bar\rho+\bar\epsilon$.
Therefore in the statement of the theorem we can choose, without
loss of generality, $\mu'=\mu+\bar\rho+\bar\epsilon$.

\paragraph{Step 2. Using the bundle equations}\\
According to Proposition\,\ref{prop cond RE}, a point $[g,\rho,v]$ is a RE if and only if
there exists $\eta\in\g_z$ such that
\begin{eqnarray}\label{e1}
\ad^*_{D_{\m^*}\bh(\rho,v)+\eta}(\rho+\J_N(v)) & = & 0\\
\label{e2}D_N\bh_\eta(\rho,v) & = & 0.
\end{eqnarray}
Equation \eqref{e1} is an equation in $\g_\mu^*$ and is equivalent to
\begin{equation}\label{e3}
\left(D_{\m^*}\bh(\rho,v)+\eta\right)\,\rrestr{T_{\rho+\J_N(v)}\OO}=0
\end{equation}
where $\OO=\{\Ad^*_{l^{-1}}(\rho+\J_N(v))\,:\,l\in G_\mu\}\subset
\g^*_\mu$ is the $G_\mu$-orbit of $\rho+\J_N(v)$ for the coadjoint
action.

Now we need to use the formal stability of $z=[e,0,0]$.  By Lemma\,\ref{techlemma}(i), this is equivalent to there being a $\gamma\in\g_z$ (which we fix throughout) such that $D_N^2\bh_\gamma$ is definite (as a quadratic form on $N$).  Let us assume it is positive definite, the negative definite case being similar.

Define $f\in C^\infty(\g_\mu^*\times N)$ by
$$f(\alpha,v):=\bh_\gamma(\alpha\rrestr{\m},v)-\bh_\gamma(0,0)$$
and let $f_\OO$ be its restriction to $\OO\times N$.  Define also
$\phi:\g_\mu^*\times N\rightarrow\g_z^*$ by
$$\phi(\alpha,v):=\J_N(v)-\alpha\rrestr{\g_z},$$
and $\phi_\OO$ its restriction to $\OO\times\ N$.

Now \eqref{e2} and \eqref{e3} are together equivalent to the restriction of $f_\OO$ to $\phi_\OO^{-1}(0)$  having a critical point at the point $(\rho\oplus\J_N(v),v)\in\m^*\oplus\g_z^*\times N = \g_\mu^*\times N$, where $\eta\in\g_z$ is the Lagrange multiplier. Indeed, the latter can be written
\begin{eqnarray*}
D_N(f - \left<\phi,\,\eta\right>)&=&0\\
D_{\g_\mu^*}(f - \left<\phi,\,\eta\right>)\rrestr{T_\alpha\OO}&=&0.
\end{eqnarray*}
where $\alpha=\rho+\J_N(v)$ (that is, $\phi(\alpha,v)=0$).  The first of these is equivalent to \eqref{e2}, while the second becomes \eqref{e3}, since $(f - \left<\phi,\,\eta\right>)$ is independent of the $\g_z^*$ component of $\alpha$.

\paragraph{Step 3. Existence of critical points}\\
We claim that for sufficiently small $\OO$, $f_\OO$ is proper.  To see this, we can apply the Splitting Lemma from singularity theory, see for example \cite{PoSt,Mon-SBC}: since $D_Nf(z)=0$ and $D_N^2f(z)$ is non-degenerate, there is a neighbourhood $U$ of $(0,0)$ in $\g_\mu^*\times N$ and change of coordinates on $U$ of the form $(\alpha,v)\mapsto(\alpha,V)$, where $V=V(\alpha,v)$, and a smooth function $g:\g_\mu^*\to\R$ such that
$$f(\alpha,v) = g(\alpha) + f(0,0) + D_N^2f(V,V),$$
and similarly for $f_\OO$.  For simplicity, let us assume $f(0,0)=0$. Now let $I=[a,b]\subset\R$ be a compact interval. Then, since $\OO$ is compact,
$$f_\OO(\alpha,v)\in I \;\Rightarrow\; D_N^2f(V,V) \in [a-\max g(\OO),\,b-\min g(\OO)].$$
Since $D_N^2f(V,V)$ is positive definite, this implies $V$ in contained in a compact set $N_I$.  Then
$$f_\OO^{-1}(I) \subset \OO\times N_I$$
which is compact, and hence indeed $f_\OO$ is proper.  It is also clear that $f_\OO$ is bounded below,  by $\min g(\OO)$.

To finish we use a well-known variational argument: any continuous function $f:X\to\R$ ($X$ a topological space) that is proper and bounded below attains its greatest lower bound, which is therefore a minimum and hence a critical point.  To see this, let $r_1$ be the greatest lower bound of $f(X)$, and let $r_2>r_1$.  Then $K:=f^{-1}([r_1,r_2])$ is non-empty and compact (as $f$ is proper). Therefore $f(K)$ is a compact subset of $\R$ so must be equal to $[r_1,r_2]$, and hence the greatest lower bound $r_1$ is attained by $f$, as claimed.

Applying this to the restriction of $f_\OO$ to $\phi_\OO^{-1}(0)\subset \OO\times N$, for $\alpha = \bar\rho+\bar\epsilon$ sufficiently small that $\OO\subset U$, shows that indeed this restriction of $f_\OO$ has a critical point as required.
\end{proof}

\begin{rem} \label{rem:G-open}
There are two differences with the persistence theorem in \cite{MoTo03}.  The first is that in \cite{MoTo03} it is only assumed that the relative equilibrium is `extremal', rather than formally stable. In practice, formal stability is the most straightforward way of testing for extremality, although it is of course a stronger condition.   An example of an extremal relative equilibrium which is not formally stable is the famous Thompson heptagon in the planar point vortex problem, see the proof in \cite{MoTo13} of its stability  (Corollary 14.7 and the calculation in Sec.\,14.3.5.2).

The second difference is more subtle.  The conclusion in \cite{MoTo03} is that relative equilibria exist for all $\mu'$ in a neighbourhood of $\J(z)$ in $\J(U_0)$ (where, as in the proof above, $U_0$ is the $G$-invariant neighbourhood on which the MGS normal form is valid).  Here on the other hand, existence is only guaranteed for $\mu'\in\J(U)$, the image under $\J$ in $\J(U_0)$ of some (small) $G$-invariant open set $U$.  This image $\J(U)$ may not in general be a full neighbourhood of $\mu=\J(z)$ in $\J(U_0)$.  On the other hand, if $G_\mu$ is compact, then in \cite{MoTo03} it is proved that $\J$ is $G$-open, meaning that the image of $U$ will be open in $\J(U_0)$ and the results will coincide (in this respect).
\end{rem}

\section{Bifurcations}\label{sec bifurcations}
We now consider the problem of bifurcations of branches of relative
equilibria. Starting with a parametrized branch of relative
equilibria of the same symplectic type, we will give sufficient conditions for
the existence of  bifurcating branches.
At the end of this section we study the possibility of bifurcations
from persisting branches consisting of formally stable relative
equilibria. Recall that we assume $G$ acts properly on the symplectic manifold $\PP$ (the phase space) with momentum map $\J:\PP\to\g^*$, and this is equivariant with respect to a (possibly modified)
 coadjoint action. Moreover, $h:\PP\to\R$ is a $G$-invariant Hamiltonian function.

\subsection{Bifurcations from branches of constant symplectic type}\label{subsec bif same type}
\ \\
The following result gives sufficient conditions for the occurrence of
bifurcations from a parametrized branch of relative equilibria of the same
symplectic type.

\begin{thm}\label{thm bifurcations} Let $z\in\PP$ be a relative equilibrium with momentum $\J(z)=\mu$ and velocity
$\xi\in\g_\mu$ written $\xi=\xi^\perp+\eta\in\m\oplus\g_z$ according to
\eqref{gmusplitting}. For a subgroup $K\leq (G_z)_\eta$ and a vector
subspace $W\subset \m^*\times \g_z^K$ let $\bar z:W\rightarrow \PP$,
be a parametrized branch of relative equilibria of the same symplectic type,
satisfying $\bar z(0,0)=z$, $\J(\bar z(\rho,\eta'))=\mu+\rho$,
$G_{\J(\bar z(\rho,\eta'))}=G_\mu$ and $G_{\bar z(\rho,\eta')}=G_z$. Let
$\xi(\rho,\eta')$ be a family of velocities for points of this
branch chosen such that $\Proj_{\g_z}\xi(\rho,\eta')=\eta+\eta'$ (this is always possible). Now suppose that there exists a subgroup $L\leq K$ satisfying
\begin{itemize}
   \item[(i)] $\g_z^L$ is co-central in $\g_\mu^L$ (see Lemma\,\ref{lemma:cocentral}),
      \item[(ii)] $N_{G_z}(L)$ acts on $\Null:=\ker \ed^2_zh_{\xi}\rrestr{N^L}$ with cohomogeneity one,   and
   \item[(iii)] the eigenvalue $\sigma(\rho,\eta')$, with $(\rho,\eta')\in W$, of $\ed^2_{\bar z(\rho,\eta')}h_{\xi(\rho,\eta')}\rrestr{\Null}$ crosses $0$ at $0\in W$.
\end{itemize}
Then, for every  $v\in \Null$ close enough to the origin,  there is a relative equilibrium
$\zt_v$ near $z$ with velocity $\xi_v\in \g_\mu$ close to $\xi$ and
not in the $G$-orbit of any point of the original branch $\bar z(\rho,\eta')$.
The stabilizer of $\zt_v$ is
$$G_{\zt_v}=(G_z)_v\geq L.$$
\end{thm}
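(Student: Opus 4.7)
The plan is to mirror the proof of Theorem \ref{thm persistence deg}, but with a coordinate shift that moves the given branch to the zero section of $N$ before applying Lemma \ref{lemmaD}. All computations take place in the local model $Y=G\times_{G_z}(\m^*\times N)$, in which the branch is represented by points $[e,\rho,v(\rho)]$ for a smooth function $v$ from the $\m^*$-part of $W$ into $N^{G_z}$ with $v(0)=0$; the $\eta'$-dependence of $\bar z(\rho,\eta')$ affects only the admissible velocity, not the underlying point, because $\J_N$ vanishes on $N^{G_z}$.

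First, introduce the shifted reduced Hamiltonian
$$F(\rho,\eta',u) := \bh_{\eta+\eta'}(\rho,\,v(\rho)+u),$$
which is $(G_z)_\eta$-invariant on $W\times N$. Two identities have to be checked. (a) $D_u F(\rho,\eta',0)=0$ for every $(\rho,\eta')\in W$: the term $D_v\bh(\rho,v(\rho))$ vanishes because $\bar z(\rho,\eta')$ satisfies \eqref{req2}, and $D_v\J_N^{\eta+\eta'}(v(\rho))$ vanishes since $\J_N^{\eta+\eta'}$ is quadratic and $v(\rho)\in N^{G_z}$ annihilates $(\eta+\eta')\cdot v(\rho)$. (b) By Lemma \ref{techlemma} transported along the branch, $D^2_u F(\rho,\eta',0)\rrestr{N^L}=\ed^2_{\bar z(\rho,\eta')}h_{\xi(\rho,\eta')}\rrestr{N^L}$, so $\ker D^2_uF(0,0,0)\cap N^L=\Null$ and the crossing eigenvalue of hypothesis (iii) is identified with the crossing eigenvalue in the hypothesis of Lemma \ref{lemmaD}.

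Second, apply Lemma \ref{lemmaD} with compact group $(G_z)_\eta$, subgroup $L$, parameter space $W$, fixed subspace $\Lambda=W^L$ (which contains a neighbourhood of $0$), and $N_K(L)$-invariant splitting $N^L=\Null\oplus S$. The lemma produces, for each small $v\in\Null$, a pair $(\rho_v,\eta'_v)\in W^L$ and a correction $s_v\in S$ with $D_v\bh_{\eta+\eta'_v}(\rho_v,\,v(\rho_v)+v+s_v)=0$, which is exactly \eqref{req2} at $[e,\rho_v,v(\rho_v)+v+s_v]$ with candidate velocity $\xi+\eta'_v\in\g_\mu$. To close the proof one must also verify the momentum-side equation \eqref{req1}, and this is done just as in Theorem \ref{thm persistence nondeg}: by the Principle of Symmetric Criticality the relevant $D_{\m^*}\bh$ lies in $\m^L$, so $\gamma:=D_{\m^*}\bh+\eta+\eta'_v\in\g_\mu^L$; pairing with $\lambda:=\rho_v+\J_N(v(\rho_v)+v+s_v)\in(\g_\mu^*)^L$, the co-centrality hypothesis (i) forces $\ad^*_\gamma\lambda$ to vanish on $\m^*$. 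Proposition \ref{prop cond RE} then certifies that $\zt_v:=\varphi([e,\rho_v,v(\rho_v)+v+s_v])$ is the desired relative equilibrium, and its stabilizer is $(G_z)_{v+s_v}=(G_z)_v\geq L$ because $\rho_v$ and $v(\rho_v)$ are $G_z$-fixed and $s_v$ inherits the isotropy of $v$ under the equivariant Splitting Lemma.

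The main obstacle is the mild group-theoretic mismatch: hypothesis (ii) is stated for $N_{G_z}(L)$, while $F$ is only $(G_z)_\eta$-invariant, so Lemma \ref{lemmaD} is being applied to a smaller symmetry group. This is harmless: the cohomogeneity-one condition is used in the proof of Lemma \ref{lemmaD} only to exhibit a single generator of the $N_K(L)$-invariants on $\Null$, and the assumed $N_{G_z}(L)$-generator is a fortiori $N_{(G_z)_\eta}(L)$-invariant, so the scalar reduction proceeds verbatim. The remaining point, that $\zt_v$ is genuinely new and not in the $G$-orbit of any $\bar z(\rho,\eta')$, follows from the irreducibility forced by cohomogeneity one on $\Null$: for $v\neq 0$ in $\Null$ one has $(G_z)_v\lneq G_z$, so $\zt_v$ has strictly smaller stabilizer than any point of the original branch.
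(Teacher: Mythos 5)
Your proposal is essentially the paper's own proof: translate the branch to the zero section of $N$ by the shift $v\mapsto v+v(\rho,\eta')$, use Lemma \ref{techlemma} to identify the Hessians of the shifted function with $\ed^2_{\bar z(\rho,\eta')}h_{\xi(\rho,\eta')}\rrestr{N^L}$ so that Lemma \ref{lemmaD} applies with $K=(G_z)_\eta$, verify \eqref{req1} by the same co-centrality/symmetric-criticality argument as in Theorem \ref{thm persistence nondeg}, and read off the stabilizer; this is step for step what the paper does. One cosmetic caveat: the $N$-component $v(\rho,\eta')$ of the branch cannot in general be taken independent of $\eta'$ (for fixed $\rho$, distinct $\eta'$ must give points in distinct $G$-orbits by Definition \ref{branchdef}), though carrying the $\eta'$-dependence along changes nothing in your argument. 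The one place where you add something beyond the paper --- the claim that the mismatch between hypothesis (ii), stated for $N_{G_z}(L)$, and the group $N_{(G_z)_\eta}(L)$ actually relevant in Lemma \ref{lemmaD} is harmless --- has the containment of invariant rings backwards: the function $g$ produced by the Splitting Lemma is only guaranteed $N_{(G_z)_\eta}(L)$-invariant, and a \emph{smaller} group has \emph{more} invariants, so cohomogeneity one of the larger group $N_{G_z}(L)$ does not by itself force $g$ to be a function of the single generator; since the paper applies Lemma \ref{lemmaD} with $K=(G_z)_\eta$ without comment, your proof is not behind the paper's here, but your stated justification does not close that gap.
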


\begin{proof}
In the local model, the given branch $\bar z(\rho,\eta')$ can be written as
$$\bar z(\rho,\eta)=[g(\rho,\eta'),\bar\rho(\rho,\eta'),v(\rho,\eta')],$$
with
$[g(0,0),\bar\rho(0,0),v(0,0)]=[e,0,0]$, for
$(\rho,\eta')\in{\m^*}\times {\g_z}^K$. Since relative equilibria
come in group orbits, we can choose in our analysis
$g(\rho,\eta')=e$ without loss of generality.

Since points in the branch satisfy $\J(\bar z(\rho,\eta'))=\mu+\bar\rho(\rho,\eta')+\J_N(v(\rho,\eta'))=\mu+\rho$ it follows that
$$\bar\rho(\rho,\eta')=\rho\quad \text{and}\quad G_{\mu+\rho}=G_\mu$$
Since $\bar z(\rho,\eta')$ and $z$ are of the same symplectic type, we can assume that $G_{\bar z(\rho,\eta')}=G_z$ and in particular
$$(\rho,v(\rho,\eta'))\in {\m^*}^{G_z}\times N^{G_z}.$$
So actually $W\subset {\m^*}^{G_z}\times {\g_z}^{K}\subset {\m^*}^{(G_z)_\eta}\times {\g_z}^{K}$.
Let $\varphi_{\rho,\eta'}: N\rightarrow N$ be the $W$-parametrized
family of diffeomorphisms of $N$ given by
$$\varphi_{\rho,\eta'}(v)=v+v(\rho,\eta').$$
These maps are obviously $K$-equivariant. Notice also that $\J_N(\varphi_{\rho,\eta'}(v))=\J_N(v)$ since $v(\rho,\eta')\in N^{G_z}$.

Notice that $\ed h_{\xi(\rho,\eta')}(\bar z(\rho,\eta'))=0$ if and only
if $\ed \bh_{\eta+\eta'}(\rho,v(\rho,\eta'))=0$ which, putting
$\bh'_{\eta+\eta'}(\rho,v)=\bh(\rho,\varphi_{\rho,\eta'}(v))-\J_N^{\eta+\eta'}(\varphi_{\rho,\eta'}(v))$,
 is in turn
equivalent to $$D_N\bh'_{\eta+\eta'}(\rho,0)=0.$$

Let us call $f(\eta',\rho,v)=\bh'_{\eta+\eta'}(\rho,v)$. The function
$f\in C^\infty(W\times N)$ is by construction $(G_z)_\eta$-invariant.
We have according to Lemma \ref{techlemma} (using $\bh'$ as the
function $\bh$ in the statement of the lemma) that, for every
$(\rho,\eta')\in W$,
$$D_Nf(\eta',\rho,0)=0,$$
and that for any $L\leq K$, the eigenvalues (and their
 multiplicities) of
$$D^2_Nf(\eta',\rho,0)\rrestr{N^L}$$
and
$$\ed^2_{\bar z(\rho,\eta')}h_{\xi(\rho,\eta')}\rrestr{N^L}$$
coincide. In particular, we also have
$$\ker D^2_Nf(0,0,0)\rrestr{N^L}=\ker D_N^2
\bh_{\eta}(0,0)\rrestr{N^L}=\ker\ed^2_{z}h_{\xi}\rrestr{N^L}.$$

We have that $f$ satisfies the hypotheses of Lemma \ref{lemmaD} with
$K=(G_z)_\eta$. Therefore we can conclude that for each
$v\in\ker D^2_N \bh_{\eta}(0,0)\rrestr{N^L}\backslash\{0\}$ there is
a pair $(\rho(v),\eta'(v))\in W$ satisfying
$$D_N
\bh_{\eta+\eta'(v)}(\rho(v),v)=0.$$
We have shown that the point
$[e,\rho(v),v]$ satisfies  \eqref{req2}. We now show that it
also satisfies \eqref{req1}. The argument is again very similar to the one
used in the proof of Theorem \ref{thm persistence nondeg}. Using the
splitting $\g_\mu^*=\m^*\oplus\g_z$ we have that
$\rho(v)+\J_N(v)\in\g_\mu^*$. In fact, since
$(\rho(v),v)\in{{\m}^*}^{G_z}\times N^L$, we can say that
$\rho(v)+\J_N(v)\in{\g_\mu^*}^L$. Calling
$\gamma=D_{\m^*}\bh(\rho(v),v)+\eta+\eta'$, if we show that
$\gamma\in\g_\mu^L$ then \eqref{req2} will follow from the
hypothesis $(i)$. It is clear that
$D_{\m^*}\bh(\rho(v),v)+\eta+\eta'\in\m\oplus\g_z=\g_\mu$, so it only
remains to show that this element is invariant by $L$.
This is true since, on the one hand $\eta+\eta'\in\g_z^L$ and on the other hand, using the Principle of Symmetric Criticality, an argument similar to the one used in Theorem \ref{thm persistence nondeg} shows that $D_{\m^*}\bh(\rho(v),v)\in\m^L$ since $\rho(v)\in  {\m^*}^{G_z}\subset{\m^*}^L$.

Therefore for each $v\in\ker \Null \backslash \{0\}$ close enough to the origin we have that $\zt(v)=\varphi([e,\rho(v),v])$ is a relative equilibrium with velocity $D_{\m^*}\bh(\rho(v),v)+\eta+\eta'\in\g_\mu$. In addition, since $G_{\zt(v)}=(G_z)_{\rho(v)}\cap (G_z)_v$ and we found that $W\in {\m^*}^{G_z}\times \g_z^L$ then $(G_z)_{\rho(v)}=G_z$ and therefore $G_{\zt(v)}= (G_z)_v$. Since $v\in N^L$ we also have $L\leq G_{\zt(v)}$.
\end{proof}

\subsection{Bifurcations from a branch of formally stable points of constant symplectic type}\label{subsec bif formal}
\ \\
As a consequence of Theorem \ref{thm bifurcations} we now show that
branches of formally stable relative equilibria of the same symplectic type
with 2-dimensional fixed point subspaces and satisfying the usual
group-theoretic conditions used in this article, typically have bifurcating
solutions at each point of the branch. This is a phenomenon due exclusively to
the existence of continuous isotropy and cannot happen for branches
of formally stable relative equilibria with discrete stabilizers, for which
the existing persisting branch of relative equilibria of the same
symplectic type must persist without bifurcation.

\begin{thm}\label{2dformallystable}
Assume the hypotheses of Theorem \ref{thm bifurcations} for a branch
$\bar z(\rho,\eta')$ consisting of formally stable points, where
conditions $(ii),(iii)$ are replaced with
\begin{itemize}
\item[(ii)] $\dim N^L=2$, and
\item[(iii)] $\ed^2_{z}h_{\xi}\rrestr{N^L}$ is
 definite.
\end{itemize}
Then, generically, for every $v\in \ker
\ed^2_zh_{\xi}\rrestr{N^L}\backslash \{0\}$ close enough to the
origin,  there is a relative equilibrium $$\zt_v$$ near $z$ with
velocity of the form $\xi_v\in \g_\mu$ close to $\xi$ and not
in the $G$-orbit of any point of the original branch $\bar z(\rho,\eta')$.
\end{thm}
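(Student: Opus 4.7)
My plan is to apply Theorem \ref{thm bifurcations} not at the central point $z$ (where condition (iii) of that theorem fails because the Hessian is definite) but at nearby points of the branch where the restricted Hessian has acquired a one-dimensional kernel. Generically these points fill out a codimension-one hypersurface in the parameter space $W$, giving the announced continuous range of bifurcation points. Throughout I work in the local model, where the branch takes the form $\bar z(\rho,\eta')=[e,\rho,v(\rho,\eta')]$ with $v(\rho,\eta')\in N^{G_z}$ by the constant symplectic type hypothesis.

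Consider the smooth family of symmetric bilinear forms
$$H(\rho,\eta'):=\ed^2_{\bar z(\rho,\eta')}h_{\xi(\rho,\eta')}\rrestr{N^L},\qquad (\rho,\eta')\in W,$$
on the 2-dimensional space $N^L$. Hypothesis (iii) gives $\det H(0,0)\neq 0$. Because $\dim N^L=2$, the zero set
$$\Sigma:=\{(\rho,\eta')\in W\,:\,\det H(\rho,\eta')=0\}$$
coincides exactly with the set of parameters at which $H$ has a one-dimensional kernel inside $N^L$. Adapting the computation from the proof of Theorem \ref{thm bifurcations} (via Lemma \ref{techlemma} and the quadratic nature of $\J_N$), one writes
$$H(\rho,\eta')=D_N^2\bh(\rho,v(\rho,\eta'))\rrestr{N^L}-\ed^2\J_N^{\eta+\eta'}\rrestr{N^L}.$$
The second summand depends linearly on $\eta'\in\g_z^K$, and generically this linear dependence is transverse at $(0,0)$ to the boundary of the cone of definite symmetric forms containing $H(0,0)$. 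When this transversality holds, the implicit function theorem gives $\Sigma$ as a smooth codimension-one submanifold of $W$ accumulating at $(0,0)$, across which $\det H$ changes sign.

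Fix any $(\rho_\ast,\eta'_\ast)\in\Sigma$ close to $(0,0)$ and set $z_\ast:=\bar z(\rho_\ast,\eta'_\ast)$, with velocity $\xi_\ast:=\xi(\rho_\ast,\eta'_\ast)$. The subspace $\Null_\ast:=\ker H(\rho_\ast,\eta'_\ast)\subset N^L$ is one-dimensional, so the induced $N_{G_z}(L)$-action on it is automatically of cohomogeneity one, verifying condition (ii) of Theorem \ref{thm bifurcations}. The transverse sign change of $\det H$ across $\Sigma$ forces the small eigenvalue of $H$ to cross zero along any path in $W$ cutting $\Sigma$ transversally at $(\rho_\ast,\eta'_\ast)$, giving condition (iii). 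Condition (i), the co-centrality of $\g_z^L$ inside $\g_\mu^L$, is an algebraic statement about fixed-point Lie algebras preserved along the branch of constant symplectic type. Theorem \ref{thm bifurcations} applied at $z_\ast$ then produces, for every small $v\in\Null_\ast\setminus\{0\}$, a relative equilibrium $\zt_v$ near $z_\ast$ not lying on the original branch, with velocity in $\g_\mu$ close to $\xi_\ast$. Letting $(\rho_\ast,\eta'_\ast)$ range over $\Sigma$ and shrink to $(0,0)$ produces bifurcating solutions arbitrarily close to $z$.

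The main obstacle is pinning down the precise meaning of ``generically'' in the second paragraph. The essential non-degeneracy required is that the linear map from $\g_z^K$ into the space of symmetric bilinear forms on $N^L$ given by $\eta'\mapsto-\ed^2\J_N^{\eta'}\rrestr{N^L}$ is both nonzero and points out of the cone of definite forms at $H(0,0)$; equivalently, that the velocity freedom genuinely moves the Hessian towards degeneracy. This fails only in the exceptional case where $\g_z^K$ acts trivially on $N^L$ through $\J_N$, which is consistent with the observation that in the free case (no continuous isotropy) such bifurcations do not occur, as noted in the introduction.
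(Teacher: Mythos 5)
There is a genuine gap, and it sits at the heart of your construction. You define $H(\rho,\eta')=\ed^2_{\bar z(\rho,\eta')}h_{\xi(\rho,\eta')}\rrestr{N^L}$ and claim that, generically, the degeneracy locus $\Sigma=\{\det H=0\}$ is a codimension-one hypersurface of $W$ \emph{accumulating at} $(0,0)$, across which $\det H$ changes sign; you then propose to apply Theorem \ref{thm bifurcations} at points of $\Sigma$ close to $(0,0)$. But hypothesis (iii) says $H(0,0)=\ed^2_zh_\xi\rrestr{N^L}$ is definite, so $\det H(0,0)>0$, and by continuity $\det H>0$ on a whole neighbourhood of $(0,0)$ in $W$. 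Hence $\Sigma$ is disjoint from a neighbourhood of the origin, and there are no points $(\rho_\ast,\eta'_\ast)\in\Sigma$ ``close to $(0,0)$'' at which to invoke the bifurcation theorem. (Relatedly, ``transverse at $(0,0)$ to the boundary of the cone of definite forms'' is not meaningful here: $H(0,0)$ lies in the interior of that cone, not on its boundary.) No choice of genericity hypothesis repairs this, because definiteness is an open condition.

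The idea you are missing is that the degeneracy must be produced by a \emph{finite}, not small, shift of the velocity, which is legitimate precisely because $z$ has continuous isotropy: $\xi$ and $\xi+t\bar\eta$ are both admissible velocities for $z$ for every $t\in\R$ and $\bar\eta\in\g_z^K$. The paper's proof keeps the point $z$ fixed and moves along the line $t\mapsto \ed^2_zh_\xi\rrestr{N^L}-t\,\ed^2\J_N^{\bar\eta}(0)\rrestr{N^L}$ in $\mathrm{Sym}(2)$. The definite forms constitute an open convex cone containing no full lines, so this line must leave the cone at some finite $t$; and provided $\ed^2\J_N^{\bar\eta}(0)\rrestr{N^L}$ is not proportional to $\ed^2_zh_\xi\rrestr{N^L}$ --- this is the genericity assumption, essentially the non-degeneracy you identify in your last paragraph --- the exit point is a nonzero element of $\mathrm{Sym}(2,0)$, i.e.\ a form with exactly one zero eigenvalue. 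One then replaces $\xi$ by the equally admissible velocity $\xi^\perp+\eta+t\bar\eta$ and reparametrizes the branch as $\bar z(\rho,t\bar\eta+\eta')$; generically the resulting family of Hessians crosses the surface $\mathrm{Sym}(2,0)$, so conditions (ii) and (iii) of Theorem \ref{thm bifurcations} hold at $z$ itself (with $\dim\Null=1$, cohomogeneity one is automatic), and that theorem delivers the bifurcating relative equilibria near $z$. Your final paragraph shows you have the right mechanism in mind; the error is in trying to realize it while confining the parameters to a small neighbourhood of the origin in $W$.
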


\begin{proof}
Using the function $f\in C^\infty(W\times N)$ introduced in the
proof of Theorem \ref{thm bifurcations} we have that in the local
model we can assume that
$\bar z(\rho,\eta')=[e,\rho,0]$ and that each of these points is a
relative equilibrium with velocity $\xi^\perp+\eta+\eta'$.
 Furthermore,
the signatures of $D_N^2f(\eta',\rho,0)\rrestr{N^L}$ and
$\ed_{\bar z(\rho,\eta')}^2h_{\xi(\rho,\eta')}\rrestr{N^L}$ are the same.
Since $\ed^2_{z}h_{\xi}\rrestr{N^L}$ is definite then so is $D_N^2f(\eta',\rho,0)\rrestr{N^L}$ in a neighbourhood of $(0,0)\in
W$. We therefore
have a map $\psi:W\rightarrow \mathrm{Sym}(2)$ from a neighbourhood
of $(0,0)$ in $W$ to the space of symmetric $2\times 2$ matrices and
its image lies in the set of definite matrices.

If we give coordinates $\{x,y,z\}$ on $\mathrm{Sym}(2)$, where $x,y$
are the diagonal elements and $z$ is off-diagonal, the set of
definite matrices is given by   the two connected
components of the set $\mathrm{Sym}(2,+)$ defined by $xy-z^2>0$. These connected components are the set of positive definite and negative definite matrices respectively. The
set $\mathrm{Sym}(2,-)$ of symmetric $2\times 2$ matrices with index 1
is defined by $xy-z^2<0$. These two regions are separated by the set
$\mathrm{Sym}(2,0)$ of matrices with at least one zero eigenvalue,
given by $xy-z^2=0$. The origin in $\R^3$
represents the zero matrix. Therefore any point in
$\mathrm{Sym}(2,0)$ other than the origin represents a matrix with only one zero
eigenvalue. Suppose that there is an element $\bar\eta\in K$
such that $D^2\J^{\bar\eta}_N(0)\rrestr{N^L}$ is not proportional
to $D_N^2f(\eta',\rho,0)\rrestr{N^L}$. This implies that we can find
some $t\in\R$ satisfying
$D_N^2f(0,0)\rrestr{N^L}+tD^2\J^{\bar\eta}_N(0,0)\rrestr{N^L}\in
\mathrm{Sym}(2,0)\backslash\{(0,0,0)\}$. Notice that this last
matrix is equal to $D_N^2\bar f(0,0)\rrestr{N^L}$ where $\bar
f(\eta',\rho,v)=\bh_{\eta+t\bar\eta+\eta'}(\rho,v)$. Notice also that we can rewrite our starting branch of relative equilibria as $\bar{\bar{z}}(\rho,\eta')=\bar z(\rho,t\bar \eta+\eta')$ but now this produces the map $\bar\psi:W\rightarrow \mathrm{Sym}(2)$ given by $\bar \psi(\rho,\eta')= D_N^2\bar f(\eta',\rho,0)\rrestr{N^L}$
which satisfies
$\bar\psi(0,0)\in\mathrm{Sym}(2,0)\backslash\{(0,0,0)\}$ and generically will cross this
surface. In other words, substituting the initial
velocity of $z$, the element $\xi$, by the also admissible velocity
$\xi^\perp+\eta+t\bar\eta$, we can force all the conditions of Theorem \ref{thm
bifurcations} to be satisfied generically, and the existence of the
stated bifurcating branch follows.
\end{proof}
\section{Example. The sleeping Lagrange top}\label{sec example}
This classic example will allow us to illustrate our results on the
persistence, stability and bifurcations of  relative
equilibria.
We will briefly summarize the construction of this system, referring
to \cite{OrRa99} for more details, since we will be using the same
notations and conventions.

The Lagrange top is a mechanical system consisting in a rigid body with an axis of symmetry and a fixed point in the presence of an homogeneous vertical gravitational field. Relative equilibria are steady rotations around the vertical axis and the axis of symmetry. Mathematically, this is the symmetric Hamiltonian system
$$(T^*\mathrm{SO}(3),\w_c,\mathbb{T}^2,\J,h),$$
where $\w_c$ is the canonical symplectic form on the cotangent bundle $T^*\mathrm{SO}(3)$ and $\J:T^*\mathrm{SO}(3)\rightarrow\mathfrak{t}^2$ its canonical momentum map. We will use the right trivialization of $T^*\mathrm{SO}(3)$, and will identify $\mathfrak{so}(3)$ with $\R^3$ and $\mathfrak{t}^2$ with $\R^2$ in the usual ways. The Euclidean structures on these two vector spaces will allow us to identify them also with the duals of their Lie algebras. In this setup, a point in $T^*\mathrm{SO}(3)$ is represented by a pair $(\Lambda, \mathbf{\pi})$, where $\Lambda\in\mathrm{SO}(3)$, viewed as a matrix group, and $\pi\in\R^3$. The several relevant elements for the analysis are:
\begin{itemize}
\item The action $\mathbb{T}^2\times T^*\mathrm{SO}(3)\rightarrow T^*\mathrm{SO}(3)$, given by
\begin{equation}\label{top action}((\theta_1,\theta_2),(\Lambda,\mathbf{\pi}))\mapsto (e^{\theta_1\widehat{\mathbf{e_3}}}\Lambda e^{-\theta_2\widehat{\mathbf{e_3}}},e^{\theta_1\widehat{\mathbf{e_3}}}\mathbf{\pi}).\end{equation}

\item The Hamiltonian
$$h(\Lambda,\mathbf{\pi})=mgl\mathbf{e_3}\cdot\Lambda\mathbf{e_3}+\frac 12 \mathbf{\pi}\cdot\mathbb{I}^{-1}_{\Lambda}\mathbf{\pi},$$
where $\mathbb{I}=\operatorname{diag} (I_1,I_1,I_3)$ and $\mathbb{I}_\Lambda=\Lambda\mathbb{I}\Lambda^{-1}$. Here $I_1,I_3$ are the principal moments of inertia of the body, $I_3$ corresponding to the axis of symmetry.

\item The momentum map $$\J(\Lambda,\mathbf{\pi})=(\mathbf{\pi}\cdot\mathbf{e_3},-\mathbf{\pi}\cdot\Lambda\mathbf{e_3})\in\R^2\simeq {\mathfrak{t}^2}^*.$$
\end{itemize}
The sleeping Lagrange top is a family of relative equilibria of this Hamiltonian system, consisting in the top rotating around the vertical position, with the gravity and symmetry axes aligned. This corresponds to the points in phase space of the form $\mathbb{T}^2\cdot z$, where $z=(\Lambda,\mathbf{\pi})=(I,\lambda I_3\mathbf{e_3})$, $I$ is the identity $3\times 3$ matrix and $\lambda$ is any non-zero real number, corresponding to the angular velocity of the rotation. It is easy to see from \eqref{top action} that the stabilizer of any sleeping Lagrange top is
$$G_z=\{(\theta_1,\theta_2)\in\mathbb{T}^2\,:\, \theta_1=\theta_2\}\simeq S^1.$$
The Lie algebra $\g_z$ can be identified with $\R$ as
$$\g_z=\{(\eta,\eta)\in \R^2\,:\,\eta\in \R\}.$$
Therefore this relative equilibrium has continuous isotropy and its velocity is not uniquely determined. It is easy to see that a possible orthogonal velocity is given by $\xi^\perp=\frac 12(\lambda,-\lambda)\in\R^2$, corresponding to the choice
\begin{equation}\label{subalgebram}
\m=\{(\gamma,-\gamma)\in \R^2\,:\,\gamma\in \R\}.
\end{equation}

 It follows that the set of all possible velocities for this relative equilibrium is $(\eta+\lambda/2,\,\eta-\lambda/2)$, for any real number $\eta$. Its momentum value is
 \begin{equation}\label{momentum ltop}\mu=(\lambda I_3,-\lambda I_3).
 \end{equation}
 Since the symmetry group is Abelian, $G_\mu=G=\mathbb{T}^2$, and $\dim \m^*=1$.

It follows  that with respect to the basis of $N$ given by
$$v_1=(\mathbf{e_1},0),\,v_2=(\mathbf{e_2},0),\,v_3=(0,\mathbf{e_1}),\,v_4=(0,\mathbf{e_2})$$
the symplectic normal space $N$ at $z=(I,\lambda I_3\mathbf{e_3})$ can be identified with $\R^4$, and
\begin{equation}\label{hesslagrange}\ed_z^2h_{(\eta+\lambda/2,\eta-\lambda/2)}\rrestr{N}=\left(\begin{array}{cccc} A & 0 & 0 & B\\ 0 & A & -B & 0\\ 0  & -B & C & 0 \\ B & 0 & 0 & C\end{array}\right)\end{equation}
where
$$A=-mgl -\frac{\lambda^2 I_3}{2I_1}\left(2I_3-I_1\right)-\eta\lambda I_3,\quad B=\frac{\lambda}{2I_1}\left(2I_3-I_1\right)-\eta,\quad C=\frac{1}{I_1}.$$

\paragraph{Stability.} We will fix $\lambda\neq 0$ and study the nonlinear stability of $z=(I,\lambda I_3)$. In order to apply Theorem \ref{stability} we need to find an admissible velocity $\xi$ for $z$ such that $\ed_z^2h_{\xi}\rrestr{N}$ is definite. This velocity will be of the form $\xi=(\eta+\lambda/2,\eta-\lambda/2)$.

The eigenvalues of $\ed_z^2h_{(\eta+\lambda/2,\eta-\lambda/2)}\rrestr{N}$ are given by
\begin{equation}\label{eigenvalues sleeping}\sigma_{\pm}=\frac 12\left( (A+C)\pm\sqrt{(A+C)^2-4(AC-B^2)}  \right),\end{equation} each with multiplicity 2. It follows that
this matrix is definite provided $AC-B^2> 0$. Since $\eta$ is arbitrary and $\lambda\neq 0$, we will make $\eta=k\lambda$ for
arbitrary $k\in \R$. Then we have
$$AC-B^2=\frac{1}{4I_1}\left(\lambda^2\left(I_3(4k+2)-I_1(4k^2+4k+1)  \right)-4glm \right).$$
Then, the relative equilibrium will be stable if we can find
some $k\in \R$ for which
$\lambda^2>\frac{4glm}{I_3(4k+2)-I_1(4k^2+4k+1)}$. It is clear that this
is always possible if $\lambda^2$ is sufficiently large, so we are
interested in the element of the family of sleeping Lagrange tops
with minimum $\vert \lambda \vert$ yet nonlinearly stable. For that,
we find a minimum of the above function of $k$, which happens when
$k=\frac{I_3-I_1}{2I_1}$, and then, the relative equilibrium
$z=(I,\lambda I_3\mathbf{e_3})$ is nonlinearly stable provided
$$\lambda^2>\frac{4glmI_1}{I_3^2}$$
which is the well known classical stability condition (fast-top condition).

\paragraph{Persistence.}
It is straightforward to show that the linear action of $G_z\simeq S^1$ on $N$ is given by
\begin{equation}\label{top linear action}\theta\in S^1\mapsto \left(\begin{array}{cccc}
\cos \theta & -\sin \theta & 0 & 0 \\
\sin \theta & \cos \theta & 0 & 0 \\
0 & 0 & \cos \theta & -\sin \theta \\
0 & 0 & \sin \theta & \cos \theta\end{array}\right).\end{equation}

It follows that $N^{G_z}=\{0\}$, whence $\ed^2_zh_\xi\rrestr{N^{G_z}}$ is trivially non-degenerate. Note also that
$G_\mu=\mathbb{T}^2$ is a torus. Recall  that
$(T^*\mathrm{SO}(3))_{(S^1)}$ is the set of points of the form $(I,\lambda I_3\mathbf{e_3})$ with $\lambda\neq 0$, that is, the set of all Lagrange tops which are not at rest. Taking into account expression \eqref{momentum ltop}, it follows from Theorem \ref{thm persistence same orbit type} that near a sleeping Lagrange top with phase space point $(I,\lambda_0 I_3\mathbf{e_3})$ there is $\epsilon>0$ such that for every $r\in (-\epsilon,\epsilon)$ there is a $\mathbb{T}^2$-orbit of  relative equilibria of orbit type $(S^1)$ that forms a persisting branch of nonlinearly stable relative equilibria  of the form $(I,(\lambda_0+r) I_3\mathbf{e_3})$. Furthermore this branch is a local symplectic submanifold of $T^*\mathrm{SO}(3)$.

 Note that in this
particular case, the persistence result already follows from Remark
4.3 in \cite{LeSi98}, although in that reference the conditions are
stronger in general, requiring the relative equilibrium to be
non-degenerate. By the uniqueness
argument of Theorem \ref{thm persistence same orbit type}, it follows that this
persisting branch corresponds exactly to a neighbourhood of $\lambda$
in the family of sleeping Lagrange tops, which exists for any
non-zero $\lambda$, as can be shown by a direct computation. Note
that the case $\lambda=0$ is not contained in the persisting branch
of sleeping Lagrange tops since in this case $G_\mu=\mathbb{T}^2$, and
then the symplectic type of this point is different from the elements of the branch. The case $\lambda=0$ corresponds to the unstable
equilibrium of the top in the upright position.

\paragraph{Bifurcations.}
 We now apply Theorem \ref{thm bifurcations} to study the possible bifurcations of relative equilibria from the family of sleeping Lagrange tops. We will start by a relative equilibrium in the family of sleeping Lagrange tops of the form $z=(I,\lambda_0I_3\mathbf{e_3})$ with  momentum $\mu=(\lambda_0I_3,-\lambda_0I_3)$ and orthogonal velocity $\xi^\perp=(\lambda_0/2,-\lambda_0/2)\in\m$  according to the choice \eqref{subalgebram} . Notice that since $G_z$ is Abelian, $(G_z)_\eta=G_z\simeq S^1$. Let us choose $L=K=\{e\}$ and $W=\m^*\times\{0\}\subset \m^*\times\g_z$.  We can identify
 $$W=\{(\rho,-\rho)\, :\, \rho\in\R\}$$
 and define a local branch of relative equilibria $\bar z(\rho)=(I,(\lambda_0+\rho/I_3)I_3\mathbf{e_3})$. Notice that $\bar z(0)=z$ and $\J(\bar z(\rho))=\mu + (\rho,-\rho)$. It is also clear that $G_{\J(\bar z(\rho))}=\mu + (\rho,-\rho)=G_\mu=G$ since $G$ is Abelian, and also that $G_{\bar z(\rho)}=G_z\simeq S^1$. We choose the family of velocities $\xi(\rho)=((\lambda_0+r)/2,-(\lambda_0+r)/2)+(\eta,\eta)$ for a fixed, although arbitrary, element $\eta\in\R$.

 It is clear, again from the Abelian character of $G$, that condition $(i)$ in Theorem \ref{thm bifurcations} is trivially satisfied. In order to check condition $(iii)$, notice that $N^L=N$. According to \eqref{eigenvalues sleeping} we see that $\sigma_+(\rho)>0$ for every value of $\rho$ but
 $$\mathrm{sign}(\sigma_-(\rho))=\mathrm{sign}( AC-B^2).$$

 Computing the value of this last expression along the branch $\bar z(\rho)$ we get
 \begin{eqnarray*}4I_1I_3^2(AC-B^2) & = & \rho^2(2I_3-I_1)+\rho\left((2\eta+\lambda_0)I_3(I_1-I_3)-2I_3^2\lambda_0\right) \\ & & -
 \left((2\eta+\lambda_0)^2I_1I_3^2-2(2\eta+\lambda_0)I_3^3\lambda_0+4mglI_3^2 \right).\end{eqnarray*}

 It is straightforward to check that $\eta$ can be chosen such that this expression vanishes at $\rho=0$ if one can solve the independent term for $\eta$. This is possible precisely when
 $$\lambda_0^2>\frac{4mglI_1}{I_3^2},$$
 that is, if the local branch $\bar z(\rho)$ is centred at a point in the formally stable range. In order to check that the eigenvalue $\sigma_{-}(\rho)$ actually changes sign at $\rho=0$ we have to guarantee that the coefficient of the linear term is different from zero. But this is true since if both the coefficients of the linear and independent term vanish simultaneously we would have
 $$\left(\frac{2I_3\lambda_0}{I_1-I_3}\right)^2I_1I_3^2-2 \frac{2I_3\lambda_0}{I_1-I_3} I_3^3\lambda_0+4mglI_3^2=0$$
 and the above expression is always positive so $\sigma(\rho)$ changes sign at $\rho=0$ and condition $(iii)$ in Theorem \ref{thm bifurcations} is satisfied.

 Thus it remains to study condition $(ii)$. For that, notice that since $L=\{e\}$ and $N_{S^1}(e)\simeq\mathrm{SO}(2)$ we have to check if
 $$\ker \ed_z^2h_{(\eta+\lambda/2,\eta-\lambda/2)}\rrestr{N},$$
 with $\eta$ chosen in a way that the crossing condition $(iii)$ is satisfied, is two-dimensional and invariant under the action of $G_z\simeq\mathrm{SO}(2)$ given by \eqref{top linear action}. It is easy to see from \eqref{hesslagrange} that for $\lambda=\lambda_0$,
  $$\ker \ed_z^2h_{(\eta+\lambda/2,\eta-\lambda/2)}\rrestr{N}=\mathrm{span}\left\langle\left(a,0,0, -\frac BC a\right),\left(0,b,\frac BC b, 0\right)\right\rangle $$
  with respect to the basis $\{v_1,v_2,v_3,v_4\}$ of $N$. It is clear from \eqref{top linear action} that this space is a $S^1$-module equivariantly isomorphic to $\R^2$ equipped with the standard action of $S^1$. Therefore $(ii)$ is also satisfied. It follows from Theorem \ref{thm bifurcations} that for every $v\in \ker \ed_z^2h_{(\eta+\lambda/2,\eta-\lambda/2)}\rrestr{N}$ near the origin, there is a relative equilibrium for the Lagrange top system not contained in the branch of sleeping tops. Since the action of $S^1$ on this space is free outside the origin, these relative equilibria have trivial isotropy and correspond to precessing tops, where the symmetry and gravity axes are not aligned.

\section{Example. 2 point vortices on the sphere}\label{sec example2}
Consider the system of 2 point vortices on the sphere \cite{KN98, LMR01}, which is a simple system
where every motion is a relative equilibrium.  The phase space is $P=S^2\times S^2\setminus\Delta$,
where $\Delta$ is the diagonal, with symplectic form $\omega=\Gamma_1\omega_1+\Gamma_2\omega_2$,
where $\omega_j$ is the natural $\mathrm{SO}(3)$-invariant symplectic form on the $j^{\mathrm{th}}$
copy of the sphere and $\Gamma_j\in\mathbb{R}$ are the vorticities of the two points (taken to be
non-zero).   The equation of motion is
$$\dot \xx_1 = \Gamma_2\frac{\xx_2\times \xx_1}{1-\xx_1\cdot \xx_2},\qquad
\dot \xx_2 = \Gamma_1\frac{\xx_1\times \xx_2}{1-\xx_1\cdot \xx_2}.$$
This is a Hamiltonian system, with Hamiltonian
$$h(\xx_1, \xx_2) = -\Gamma_1\Gamma_2\log\|\xx_1-\xx_2\|^2$$
where $\|\xx_1-\xx_2\|$ is the Euclidean distance between the points (different versions of this
differ by factors of $2\pi$, but this can always be compensated by rescaling time).  There are two possible $G$-Hamiltonian systems depending on whether $\Gamma_1$ or $\Gamma_2$ are different or equal. If $\Gamma_1\neq\Gamma_2$ then the group $\mathrm{SO}(3)$ acts on $\PP$ by the diagonal action. We think of each copy of
$S^2$ as embedded as the unit sphere in $\mathbb{R}^3$. This action is Hamiltonian with momentum map given by, after identifying $\mathfrak{so}(3)^*$ with $\mathbb{R}^3$ as
usual,
\begin{equation}\label{moment-vort}\mathbf{J}(\xx_1,\xx_2) = \Gamma_1\xx_1+\Gamma_2\xx_2.\end{equation}
If $\Gamma_1=\Gamma_2$ then the system supports a Hamiltonian action of the direct product $\mathrm{SO}(3)\times \mathbb{Z}_2^\tau$ where $\mathrm{SO}(3)$ acts as in the previous case and $\mathbb{Z}_2^\tau$ is the reflection group generated by  $\tau(\xx_1,\xx_2)=(\xx_2,\xx_1)$. The momentum map for this action is also given by the expression \eqref{moment-vort}.

The action of $\mathrm{SO}(3)$ (respectively $\mathrm{SO}(3)\times \mathbb{Z}_2^\tau$)  on  phase space is locally free except where $\xx_1=-\xx_2$ (antipodal
points), in which case the isotropy is $\mathrm{SO}(2)$ (respectively $\widetilde{\mathrm{O}(2)}$, which is generated by the group $\mathrm{SO}(2)$ of rotations around $\mathbf{\xx_1}$ and an element $(R_\mathbf{n},\tau)\in \mathrm{SO}(3)\times \mathbb{Z}_2^\tau$. Here $R_\mathbf{n}$ denotes a  rotation of angle $\pi$ around a vector $\mathbf{n}$ perpendicular to $\xx_1$. Since, for a given $\mathrm{SO}(2)$, the
subspace $\PP^{\mathrm{SO}(2)}=\PP^{\widetilde{\mathrm{O}(2)}}$ consists of just two points, they are necessarily
equilibria for any invariant Hamiltonian. Non-antipodal pairs have stabilizers $\{e\}$ or $\mathbb{Z}_2(\mathbf{n})$ respectively, where $\mathbb{Z}_2(\mathbf{n})$ is the subgroup generated by the element $(R_\mathbf{n},\tau)\in \mathrm{SO}(3)\times \mathbb{Z}_2^\tau$.

With 2 point vortices, as already mentioned, every solution is a relative equilibrium, and indeed
given any $\xx_1,\xx_2$ which are not antipodal, the angular velocity is
$$\xi = \frac 1{2\sin^2(\theta/2)}\mu$$
where $\mu=\mathbf{J}(\xx_1,\xx_2)$ and $\theta$ is the angle subtended by the two points.  With respect to antipodal points, the situation changes depending on the two possibilities for the vorticities.
\paragraph{1. Case $\Gamma_1\neq\Gamma_2$.}
In the
limit of an antipodal pair, so as $\xx_2\to -\xx_1$, one has $\xi\to \frac
12\mu=\frac12(\Gamma_1-\Gamma_2)\xx_1\neq0$. This limiting $\xi$ is of course an element of
$\mathfrak{g}_z$ for $z=(\xx_1,-\xx_1)$, in order that $z$ be an equilibrium point. In this case  $N$ is 2-dimensional.  Specifically, take $z=(\mathbf{e_3},-\mathbf{e_3})\in S^2\times S^2$, and using coordinates
$(x_1,x_2,y_1,y_2)$
the tangent space to the group orbit is spanned by $(1,-1,0,0)$ and $(0,0,1,-1)$. Since
$\dim\mathfrak{g}_z=1$ the rank of $T_z\mathbf{J}$ is also 2, and its kernel is spanned by the
tangent vectors \begin{eqnarray}\label{Nvortices}v_1=(\Gamma_2,-\Gamma_1,0,0),\quad
v_2=(0,0,\Gamma_2,-\Gamma_1).\end{eqnarray} This is therefore also the symplectic normal space $N$
when $\Gamma_1\neq\Gamma_2$.  On this space, and with this basis,
$$\ed^2_zh\rrestr{N} = \frac12\Gamma_1\Gamma_2(\Gamma_1-\Gamma_2)^2I_N,\quad
\ed^2_z\mathbf{J}\rrestr{N}= \Gamma_1\Gamma_2(\Gamma_1-\Gamma_2)I_N,$$
where $I_N$ is the identity matrix on $N$.  Thus, for $\eta\in\mathfrak{g}_z\simeq\R$,
$$\ed^2_zh_\eta\rrestr{N} =
\frac12\Gamma_1\Gamma_2(\Gamma_1-\Gamma_2)(\Gamma_1-\Gamma_2-2\eta)I_N.$$
Thus, assuming $\Gamma_1\neq\Gamma_2$, this is degenerate precisely at $\eta =
\frac12(\Gamma_1-\Gamma_2)$; note that this is precisely the limiting velocity $\frac12\mu$ mentioned
above. In particular, note that $\mu=(\Gamma_1-\Gamma_2)\mathbf{e_3}\neq 0$ and then
$G_\mu=G_z=\mathrm{SO}(2)$ and $\m^*=0$.

With reference to Theorem \ref{thm persistence deg}, the relative equilibrium $z$ is an equilibrium,
so $\xi^\perp=0$, and let $\eta=\frac12(\Gamma_1-\Gamma_2)$.
We take $L=\{e\}$ and $K=G_z=\mathrm{SO}(2)$.  Therefore $N^L=N$ and
$[\g_\mu^L,\g_\mu^L]=[\g_\mu,\g_\mu]=0$ since $G_\mu$ is Abelian.
 We have
$$\ed^2_zh_{\eta+\eta'}\rrestr{N} = \Gamma_1\Gamma_2(\Gamma_2-\Gamma_1)\eta'I_N,$$
which has a unique eigenvalue $\lambda(\eta')=\Gamma_1\Gamma_2(\Gamma_2-\Gamma_1)\eta'$ that changes
sign at $\eta'=0$, satisfying hypothesis $(i)$. At $\eta'=0$, $\ker \ed^2_zh_{\eta}\rrestr{N}=N$
which is 2-dimensional, and from \eqref{Nvortices} is a $\mathrm{SO}(2)$-module isomorphic to $\R^2$
equipped with the standard circle action, satisfying  hypothesis $(ii)$. Since the action of $G_z$ is
free outside the origin, it follows from Theorem \ref{thm persistence deg} that for every element of
$N\backslash\{0\}$ there is a relative equilibrium near $z$ with trivial isotropy. That is, a
rotating configuration of non-antipodal pairs.

On the other hand the content of  Theorem \ref{thm persistence nondeg} is empty in this case, since $\m^*=0$ and therefore we only get the same equilibrium point, with different velocities $\eta'\in\g_z$. On the third hand, Theorem \ref{thm persistence same orbit type} is not applicable here since $\PP_{(\mathrm{SO}(2))}$ consists of a single group orbit.

\paragraph{2. Case $\Gamma_1=\Gamma_2$.}
In this case, noting that for $z=(\xx_1,-\xx_1)$ we have $\mu=\J(z)=0$ and we find that $G_\mu=\mathrm{SO}(3)\times \mathbb{Z}_2^\tau$ and $G_z=\widetilde{\mathrm{O}(2)}$. Therefore the kernel of $T_z\J$ coincides with the tangent space to the group orbit at $z$, which implies that $N$ is trivial. However $\m^*=\xx_1^\perp$ and is therefore 2-dimensional. Since $\ed^2_zh_{\eta+\eta'}\rrestr{N}$ is trivially non-degenerate for any $\eta'$, Theorem \ref{thm persistence deg} is not applicable. Also, since in this case $G_z=\widetilde{\mathrm{O}(2)}$, then  $\PP_{\widetilde{\mathrm{O}(2)}}$ consists of a single group orbit and Theorem \ref{thm persistence same orbit type} is not applicable either.

With respect to Theorem \ref{thm persistence nondeg}, let us choose $K=\mathbb{Z}_2(\mathbf{n})$. Then we have ${\m^*}^K=\mathbb{R}\mathbf{n}$ which is Abelian, fulfilling condition $(i)$. We also have $\g_z^K=0$. Since $N^K$ is trivial, condition $(ii)$ is automatically satisfied too. It follows from Theorem \ref{thm persistence nondeg} that there is a smooth branch of ($G$-orbits of) relative equilibria near $z$ with stabilizers containing $\mathbb{Z}_2(\mathbf{n})$. Noticing that for $\Gamma_1=\Gamma_2$ the stabilizer of  non-antipodal  pairs is precisely $\mathbb{Z}_2(\mathbf{n})$, we find that the application of the theorem produces the branch of rotating non-antipodal relative equilibria near antipodal equilibria.

Since every motion is a relative equilibrium, the reduced dynamics for this system is trivial so stability holds trivially.  Moreover, the theorems of Section\,\ref{sec bifurcations} do not apply as there are no non-trivial branches of relative equilibria from which to bifurcate.

\end{document}